\numberwithin{equation}{section}
\newtheorem{thm}{Theorem}[section]
\newtheorem{prop}[thm]{Proposition}
\newtheorem{cor}[thm]{Corollary}
\newtheorem{lem}[thm]{Lemma}
\theoremstyle{definition}
\newtheorem{df}[thm]{Definition}
\newtheorem{ex}[thm]{Example}
\newtheorem{rmk}[thm]{Remark}
\newtheorem{ques}[thm]{Question}
\def\gldim{\mathop{\mathrm{gl.dim}}\nolimits}
\def\IM{\mathop{\mathrm{Im}}\nolimits}
\def\Soc{\mathop{\mathrm{Soc}}\nolimits}
\def\add{\mathop{\mathrm{add}}\nolimits}
\def\thick{\mathop{\mathrm{thick}}\nolimits}
\def\mod{\mathop{\mathrm{mod}}\nolimits}
\def\Mod{\mathop{\mathrm{Mod}}\nolimits}
\def\proj{\mathop{\mathrm{proj}}\nolimits}
\newcommand{\Hom}{\mathrm{Hom}}
\newcommand{\End}{\mathrm{End}}
\newcommand{\Ext}{\mathrm{Ext}}
\newcommand{\Lten}{\stackrel{\mathbb L}{\otimes}}
\newcommand{\op}{\mathrm{op}}
\newcommand{\dg}{\mathrm{dg}}
\title{Realizing stable categories as derived categories}
\author{Kota Yamaura}
\address{Graduate School of Mathematics\\ 
 Nagoya University, Chikusa-ku Nagoya 464-8602 Japan}
\email{m07052d@math.nagoya-u.ac.jp}
\thanks{The author is supported by JSPS Fellowships for Young Scientists No.22-5801.
}
\begin{document}

\maketitle

\begin{abstract}
In this paper, we discuss a relationship between representation theory of graded self-injective algebras and that of algebras of finite global dimension.  
For a positively graded self-injective algebra $A$ such that $A_0$ has finite global dimension, we construct two types of triangle-equivalences.
First we show that there exists a triangle-equivalence between the stable category of $\mathbb{Z}$-graded $A$-modules and the derived category of a certain algebra $\Gamma$ of finite global dimension. 
Secondly we show that if $A$ has Gorenstein parameter $\ell$, then there exists a triangle-equivalence between the stable category of $\mathbb{Z}/\ell\mathbb{Z}$-graded $A$-modules and a derived-orbit category of $\Gamma$, which is a triangulated hull of the orbit category of the derived category.
\end{abstract}

\tableofcontents

\section{Introduction}
\label{intro_section}

The notion of triangulated categories was introduced by A. Grothendieck and J. L. Verdier in their work of derived categories. 
It appears in several parts of mathematics, for example, representation theory, algebraic geometry and algebraic topology.
In representation theory of algebras, there are many important triangulated categories. 
In this paper, we study two major classes of triangulated categories, i.e. derived categories and stable categories.

The derived category of an abelian category is a powerful tool which simplifies the study of homological properties of the abelian category. 
In representation theory of algebras, the notion of derived equivalences is important since it preserves a lot of homological properties of algebras.
Moreover the derived category connects representation theory of algebras with various parts of mathematics, for example, algebraic geometry and mathematical physics.

In 1980s D. Happel introduced a different kind of triangulated categories in \cite{Ha1}.
He showed that the stable category of a Frobenius category has a natural structure of a triangulated category. 
These triangulated categories are called algebraic. 
A typical example is given by self-injective algebras. 
The category $\mod A$ of modules over a self-injective algebra $A$ is a Frobenius category, so the stable category $\underline{\mod}A$ is an algebraic triangulated category.
Therefore we can study $\mod A$ by applying general theory of algebraic triangulated categories.

There is an important relationship between the stable categories of self-injective algebras and the derived categories of algebras. 
Happel studied their relationship from the following viewpoint. 
For any algebra $\Lambda$ over a field $K$, one can associate the trivial extension $A=\Lambda \oplus D\Lambda$ where $D$ is $K$-dual. 
This is a positively graded self-injective algebra with $A_0=\Lambda$ and $A_1=D\Lambda$.
He showed that if $\Lambda$ has finite global dimension, then there exists a triangle-equivalence 
\begin{eqnarray}
\underline{\mod}^{\mathbb{Z}}A \simeq \mathsf{D}^{\mathrm{b}}(\mod\Lambda) \label{intro_equation}
\end{eqnarray}
where $\underline{\mod}^{\mathbb{Z}}A$ is the stable category of the category of $\mathbb{Z}$-graded $A$-modules, and $\mathsf{D}^{\mathrm{b}}(\mod\Lambda)$ is the bounded derived category of $\mod\Lambda$.
This equivalence played an important role in representation theory. 
Although these algebras $\Lambda$ and $A$ are quite different from homological viewpoint, their representation theories are closely related.

The first aim of this paper is to generalize Happel's equivalence.
From Happel's result it is natural to ask the following question.

\begin{ques}\label{intro_Q1}
Let $A$ be a $\mathbb{Z}$-graded self-injective algebra.
When is the stable category $\underline{\mod}^{\mathbb{Z}}A$ triangle-equivalent to the derived category $\mathsf{D}^{\mathrm{b}}(\mod\Lambda)$ for some algebra $\Lambda$ ?
\end{ques}

Our approach to this question is to apply the notion of tilting objects (Definition \ref{df_tilting}) and tilting theorem (Theorem \ref{Keller}). 
If a Krull-Schmidt algebraic triangulated category $\mathcal{T}$ has a tilting object $T$, then $\mathcal{T}$ is triangle-equivalent to  the homotopy category $\mathsf{K}^{\mathrm{b}}(\proj \End_{\mathcal{T}}(T))$ of bounded complexes of finitely generated projective $\End_{\mathcal{T}}(T)$-modules.

Moreover it is well-known that if an algebra $\Lambda$ has finite global dimension, then the canonical embedding $\mathsf{K}^{\mathrm{b}}(\proj \Lambda) \rightarrow \mathsf{D}^{\mathrm{b}}(\mod \Lambda)$ is an equivalence.
These observations tell us that it is reasonable to divide Question \ref{intro_Q1} into the following questions.

\begin{ques}\label{intro_Q2}
Let  $A$ be a $\mathbb{Z}$-graded self-injective algebra.
\begin{enumerate}
\def\labelenumi{(\theenumi)}
\item When does the stable category $\underline{\mod}^{\mathbb{Z}}A$ have tilting objects ?
\item If $\underline{\mod}^{\mathbb{Z}}A$ has tilting objects, do their endomorphism algebras have finite global dimension ?
\end{enumerate}
\end{ques}

The following first main result in this paper gives a complete answer to Questions \ref{intro_Q1} and \ref{intro_Q2} for the positively graded case.

\begin{thm}[Theorem \ref{main_thm2}, \ref{main_thm3}]\label{main_thm} 
Let $A$ be a positively graded self-injective algebra.
The following conditions are equivalent.
\begin{enumerate}
\def\labelenumi{(\theenumi)} 
\item $A_0$ has finite global dimension.
\item $\underline{\mod}^{\mathbb{Z}}A$ has tilting objects.
\item There exists a triangle-equivalence $\underline{\mathrm{mod}}^{\mathbb{Z}}A \simeq \mathsf{K}^{\mathrm{b}}(\mathrm{proj}\Lambda)$ for some algebra $\Lambda$.
\item There exists a triangle-equivalence $\underline{\mathrm{mod}}^{\mathbb{Z}}A \simeq \mathsf{D}^{\mathrm{b}}(\mathrm{mod}\Lambda)$ for some algebra $\Lambda$.
\end{enumerate}
If $A$ satisfies these equivalent conditions, then any algebra $\Lambda$ in (3) has finite global dimension.
\end{thm}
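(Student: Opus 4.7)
The plan is to close a logical cycle among conditions (1)--(4), with the sharpening on $\gldim \Lambda$ falling out of a specific implication.

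The central and most substantive step is (1) $\Rightarrow$ (2): producing an explicit tilting object of $\underline{\mod}^{\mathbb{Z}}A$ from the hypothesis $d := \gldim A_0 < \infty$. Modelled on Happel's trivial-extension equivalence (\ref{intro_equation}), a natural candidate is a direct sum of shifted truncations of $A$, for example
\[
T \ = \ \bigoplus_{i=0}^{d} (A / A_{\geq i+1})(-i),
\]
or a closely related variant. The verification splits into three parts: that $T$ makes sense as an object of $\underline{\mod}^{\mathbb{Z}}A$ after stripping graded projective summands; that $\Hom_{\underline{\mod}^{\mathbb{Z}}A}(T, T[j]) = 0$ for $j \neq 0$, using the positivity of the grading (which restricts where graded Hom-spaces can be nonzero) together with the finite-global-dimension hypothesis on $A_0$ (which caps the relevant resolution lengths); and that $\thick(T) = \underline{\mod}^{\mathbb{Z}}A$, proved by devissage up the grading, starting from finite $A_0$-projective resolutions of the simple $A$-modules concentrated in degree $0$ and propagating through the positive-grading structure. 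Simultaneously, the endomorphism algebra $\Lambda_0 := \End_{\underline{\mod}^{\mathbb{Z}}A}(T)$ can be computed and directly shown to have finite global dimension from the shape of the construction.

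The remaining implications are more formal. The step (2) $\Rightarrow$ (3) is Keller's tilting theorem (Theorem~\ref{Keller}), applicable because $\mod^{\mathbb{Z}}A$ is a Hom-finite Krull-Schmidt Frobenius category, making $\underline{\mod}^{\mathbb{Z}}A$ an idempotent-complete algebraic triangulated category. The reverse (3) $\Rightarrow$ (2) is immediate since tilting objects are preserved by triangle-equivalences. The equivalence (3) $\Leftrightarrow$ (4) together with the final claim is handled as follows: given an equivalence $\underline{\mod}^{\mathbb{Z}}A \simeq \mathsf{K}^{\mathrm{b}}(\proj \Lambda)$, combine it with the equivalence $\underline{\mod}^{\mathbb{Z}}A \simeq \mathsf{K}^{\mathrm{b}}(\proj \Lambda_0)$ arising from (1) $\Rightarrow$ (3) to obtain a triangle-equivalence $\mathsf{K}^{\mathrm{b}}(\proj \Lambda) \simeq \mathsf{K}^{\mathrm{b}}(\proj \Lambda_0)$; by Rickard's theorem this lifts to a derived equivalence between $\Lambda$ and $\Lambda_0$, and since $\gldim \Lambda_0 < \infty$ and derived equivalence preserves the condition $\mathsf{K}^{\mathrm{b}}(\proj) = \mathsf{D}^{\mathrm{b}}(\mod)$ (i.e., finite global dimension, via preservation of perfect complexes), we conclude $\gldim \Lambda < \infty$, yielding (4) and the final claim simultaneously.

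For (4) $\Rightarrow$ (1), the idea is to transfer structural information from $\mathsf{D}^{\mathrm{b}}(\mod \Lambda)$ back to $A_0$. View each simple $A_0$-module as a degree-$0$-concentrated graded $A$-module and hence as an object of $\underline{\mod}^{\mathbb{Z}}A \simeq \mathsf{D}^{\mathrm{b}}(\mod \Lambda)$; the Serre duality coming from the Frobenius side transports to a Serre functor on $\mathsf{D}^{\mathrm{b}}(\mod \Lambda)$, forcing $\gldim \Lambda < \infty$, after which a standard degree-shift argument translates bounded Ext-amplitude between the simple $A_0$-modules into finite projective dimension over $A_0$. The main obstacle will be (1) $\Rightarrow$ (2): choosing the right tilting object $T$ and verifying both its self-Ext vanishing and that its thick closure fills the stable category. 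The remaining ingredients are essentially formal consequences of Keller's theorem, Rickard's theorem, and standard homological bookkeeping with graded modules.
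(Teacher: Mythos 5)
Your proposal follows the paper's own structure quite closely, and the ideas you identify are the right ones: explicit construction of a tilting object as a sum of shifted truncations of $A$, self-Ext vanishing from positivity of the grading, devissage through simples for generation, Keller's theorem for (2)$\Leftrightarrow$(3), Serre duality from the Frobenius side to run (4)$\Rightarrow$(1), and computation of $\gldim$ of the endomorphism ring from its triangular-matrix shape. The only substantive divergence is in the formal bookkeeping: for the final claim and (3)$\Rightarrow$(4) you pass through Rickard's theorem to upgrade a $\mathsf{K}^{\mathrm{b}}(\proj)$-equivalence to a derived equivalence, whereas the paper goes the other direction --- it first shows the specific endomorphism algebra $\Gamma$ has finite global dimension (Theorem~\ref{gldim_fin}), producing the $\mathsf{D}^{\mathrm{b}}$-equivalence directly, and then invokes preservation of $\gldim$ under derived equivalence to handle an arbitrary $\Lambda$. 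Both routes work, but note your version still needs an explicit (2)$\Rightarrow$(1) (or (3)$\Rightarrow$(1)) step to break circularity before ``(1)$\Rightarrow$(3)'' can be invoked inside your (3)$\Rightarrow$(4) argument; that step is the bounded-Ext argument you sketch inside (4)$\Rightarrow$(1), and in the paper it appears as Lemma~\ref{hom_lem} together with the identification $\Ext^i_A(A_0/J_{A_0},A_0/J_{A_0})_0 = \Ext^i_{A_0}(A_0/J_{A_0},A_0/J_{A_0})$. Finally, be careful with the sign in your candidate $T$: the paper's $T=\bigoplus_{i\geq 0}A(i)_{\leq 0}$ places every summand in degrees $\leq 0$ (crucial for $\Hom(T,\Omega^jT)_0 = 0 = \Hom(\Omega^jT,T)_0$ when $j>0$, since syzygies of a positively graded quotient live in degrees $\geq 1$), whereas $(A/A_{\geq i+1})(-i)$ in the paper's convention $X(i)_j = X_{j+i}$ would land in degrees $\geq 0$ and the two-sided vanishing no longer comes for free --- the ``closely related variant'' caveat is doing real work there.
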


Applying Theorem \ref{main_thm} to the trivial extension $A$ of $\Lambda$, we recover Happel's equivalence \eqref{intro_equation}.

\medskip

The second aim of this paper is to study the stable categories $\underline{\mod}^{\mathbb{Z}/a\mathbb{Z}}A$ for positively graded self-injective algebras $A$ and positive integers $a$.
The category $\underline{\mod}^{\mathbb{Z}/a\mathbb{Z}}A$, especially $\underline{\mod}A$ for the case $a=1$, is a classical object in representation theory. 
The category $\underline{\mod}^{\mathbb{Z}}A$ has the grade shift functor $(a):\underline{\mod}^{\mathbb{Z}}A \rightarrow \underline{\mod}^{\mathbb{Z}}A$. 
We have a fully faithful functor $(\underline{\mod}^{\mathbb{Z}}A)/(a) \rightarrow \underline{\mod}^{\mathbb{Z}/a\mathbb{Z}}A$ where $(\underline{\mod}^{\mathbb{Z}}A)/(a)$ is the orbit category. 
This is not an equivalence in general, and their difference has been widely studied.
At least the image of this functor generates $\underline{\mod}^{\mathbb{Z}/a\mathbb{Z}}A$ as a triangulated category.
These observations suggest that $\underline{\mod}^{\mathbb{Z}/a\mathbb{Z}}A$  will be triangle-equivalent to some derived-orbit category defined as follows.

Let $\Lambda$ be an algebra of finite global dimension, and $M$ be a bounded complex of $\Lambda^{\op} \otimes_K \Lambda$-modules 
such that $F:=- \Lten_{\Lambda} M: \mathsf{D}^{\mathrm{b}}(\mod\Lambda) \rightarrow \mathsf{D}^{\mathrm{b}}(\mod\Lambda)$ is a triangle-equivalence. 
Although the orbit category $\mathsf{D}^{\mathrm{b}}(\mod\Lambda)/F$ does not necessarily have a structure of a triangulated category, 
we can construct a triangulated category $\mathsf{D}(\Lambda,M)$ which contains $\mathsf{D}^{\mathrm{b}}(\mod\Lambda)/F$ as a full subcategory and is generated by $\mathsf{D}^{\mathrm{b}}(\mod\Lambda)/F$ (see \cite{Ke2}).
We call $\mathsf{D}(\Lambda,M)$ the derived-orbit category. 

The first example of derived-orbit categories is cluster categories $\mathsf{D}(\Lambda,D\Lambda[-2])$.
It was introduced in \cite{Ami1,Ami2,BMRRT,Ke2} from the viewpoint of cluster tilting theory and their applications to the study of cluster algebras.

Now we pose the following question.

\begin{ques}\label{intro_Q3}
Let $A$ be a positively graded self-injective algebra such that $A_0$ has finite global dimension and $a$ a positive integer.
Is the stable category $\underline{\mod}^{\mathbb{Z}/a\mathbb{Z}}A$ triangle-equivalent to some derived-orbit category $\mathsf{D}(\Lambda,M)$ ?
\end{ques}

We study this question for a positively graded self-injective algebra satisfying the following condition.

\begin{df}\label{df_Gor_par}
Let $A$ be a positively graded self-injective algebra. 
We say that $A$ has \emph{Gorenstein parameter} $\ell$ if $\Soc A$ is contained in $A_{\ell}$.
\end{df}

The following second main theorem in this paper gives a partial answer to Question \ref{intro_Q3}.

\begin{thm}[Theorem \ref{hull_vs_graded}]\label{intro_thm3}
Let $A$ be a positively graded self-injective algebra of Gorenstein parameter $\ell$. 
Assume that $A_0$ has finite global dimension.
Then there exist an algebra $\Gamma$ of finite global dimension, a $\Gamma^{\op} \otimes_K \Gamma$-module $M$, and  a triangle-equivalence 
\[
\underline{\mod}^{\mathbb{Z}/\ell\mathbb{Z}}A \simeq \mathsf{D}(\Gamma,M[1]).
\]
\end{thm}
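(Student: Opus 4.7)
The plan is to bootstrap from Theorem \ref{main_thm}. Applied to $A$, it produces an algebra $\Gamma$ of finite global dimension together with a triangle-equivalence $\Phi : \underline{\mod}^{\mathbb{Z}}A \xrightarrow{\sim} \mathsf{D}^{\mathrm{b}}(\mod \Gamma)$. I would transport the grade-shift autoequivalence $(\ell)$ across $\Phi$ to obtain a triangle autoequivalence $F$ of $\mathsf{D}^{\mathrm{b}}(\mod\Gamma)$. Since $\Gamma$ has finite global dimension, $F$ is represented by a derived tensor product $F \simeq - \Lten_\Gamma N$ for a two-sided tilting complex $N$ of $\Gamma$-bimodules. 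The next step is to use the Gorenstein parameter hypothesis $\Soc A \subseteq A_\ell$ to argue that $N \simeq M[1]$ for an \emph{honest} bimodule $M$: on the explicit tilting object used to prove Theorem \ref{main_thm}, the action of $(\ell)$ should coincide with the suspension $[1]$ twisted by tensoring with a bimodule concentrated in one degree, because the socle being concentrated in degree $\ell$ forces the Nakayama shift to align with a single grade shift.

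Next I would compare orbit categories. The grade-reduction functor $\pi : \underline{\mod}^{\mathbb{Z}}A \to \underline{\mod}^{\mathbb{Z}/\ell\mathbb{Z}}A$ is a triangle functor satisfying $\pi \circ (\ell) = \pi$, so it factors through $(\underline{\mod}^{\mathbb{Z}}A)/(\ell)$ and, after transport, yields a functor $\mathsf{D}^{\mathrm{b}}(\mod\Gamma)/F \to \underline{\mod}^{\mathbb{Z}/\ell\mathbb{Z}}A$. Because $\underline{\mod}^{\mathbb{Z}/\ell\mathbb{Z}}A$ is algebraic (the stable category of the Frobenius category $\mod^{\mathbb{Z}/\ell\mathbb{Z}}A$), Keller's universal property of the triangulated hull provides a triangle functor
\[
\Psi : \mathsf{D}(\Gamma, M[1]) \longrightarrow \underline{\mod}^{\mathbb{Z}/\ell\mathbb{Z}}A.
\]
To show $\Psi$ is an equivalence I would test it on the image of the tilting object $T \in \underline{\mod}^{\mathbb{Z}}A$. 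Essential surjectivity follows from the fact that the image of $T$ generates $\underline{\mod}^{\mathbb{Z}/\ell\mathbb{Z}}A$ as a thick subcategory, which should be inherited from the analogous generation statement in $\underline{\mod}^{\mathbb{Z}}A$ given by Theorem \ref{main_thm} together with surjectivity of $\pi$ on objects. Fully faithfulness reduces to a comparison of $\Hom$-groups, using that morphisms in the $\mathbb{Z}/\ell\mathbb{Z}$-graded stable category decompose as a direct sum of morphisms in the $\mathbb{Z}$-graded stable category over the $\ell\mathbb{Z}$-orbit of grade shifts, which matches the defining $\Hom$-formula for the orbit category.

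The main obstacle is the identification $N \simeq M[1]$ for an ordinary bimodule $M$: one must pin down simultaneously the bimodule and the fact that the suspension appears with exactly one shift. This is precisely where the Gorenstein parameter enters essentially, and a clean argument will presumably require an explicit choice of tilting object in $\underline{\mod}^{\mathbb{Z}}A$ (naturally the one built in the proof of Theorem \ref{main_thm}) whose interaction with $(\ell)$ can be computed in closed form; the key input is that a grade shift by the Gorenstein parameter acts like the Serre/Nakayama functor shifted once. A secondary technical point is verifying the $\Hom$-splitting over $\ell\mathbb{Z}$-orbits that underlies fully faithfulness; once it is established for the tilting object this should extend to all pairs of objects of the orbit category by a standard dévissage inside the thick subcategory they generate.
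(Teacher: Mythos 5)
Your outline has the right overall shape --- it is the same strategy as the paper's --- but the places you flag as ``should work'' are exactly the places where the paper has to do real work, and several of your intermediate claims are not quite right as stated.

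First, the step ``$F$ is represented by $-\Lten_\Gamma N$ for a two-sided tilting complex'' is not automatic. Rickard's theory produces \emph{some} two-sided tilting complex realizing a derived equivalence between two algebras, but it does not assert that an arbitrarily given triangle autoequivalence of $\mathsf{D}^{\mathrm{b}}(\mod\Gamma)$ is isomorphic as a functor to a derived tensor functor; the question of whether a given equivalence is ``standard'' is a separate and delicate matter. The paper sidesteps this entirely: it never abstracts the autoequivalence and then tries to recognize it as a tensor functor. Instead it writes down $\Gamma$ and the bimodule $M$ explicitly in matrix form (Lemma~\ref{nonstab_end}, display~\eqref{df_M}), regards $\underline{T}$ and $M$ as $\mathbb{Z}$-graded $\Gamma^{\op}\otimes_K A$-modules, and \emph{proves} the commutativity $(\ell)\circ G \simeq G\circ(-\Lten_\Gamma M[1])$ directly. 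The mechanism is the short exact sequence of Lemma~\ref{reqtri_lem}: $0\to M\to\bigoplus_{i=\ell}^{2\ell-1}A(i)\to\underline{T}(\ell)\to 0$ in $\mod^{\mathbb{Z}}(\Gamma^{\op}\otimes_K A)$, i.e.\ the first syzygy of $\underline{T}(\ell)$ is $M$, so that $\underline{T}(\ell)\simeq (M\Lten_\Gamma\underline{T})[1]$ modulo projectives. This is the concrete content of your intuition that ``the socle being concentrated in degree $\ell$ forces the Nakayama shift to align with a single grade shift,'' and it is the step you yourself identify as the main obstacle but do not carry out. Without this explicit syzygy computation, the identification $N\simeq M[1]$ for an honest bimodule $M$ simply has no source.

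Second, invoking ``Keller's universal property of the triangulated hull'' to manufacture $\Psi$ is looser than the argument actually needs to be. A commuting square of triangle functors at the level of $\mathsf{D}^{\mathrm{b}}(\mod\Gamma)/F\to\underline{\mod}^{\mathbb{Z}/\ell\mathbb{Z}}A$ does \emph{not} by itself lift to a triangle functor out of $\mathsf{D}(\Gamma,M[1])$; the universal property used in the paper (Theorem~\ref{univ_orbit}, from Iyama--Oppermann) requires as input a triangle $P\to U(\ell)\to N\Lten_\Gamma U\to P[1]$ in $\mathsf{D}^{\mathrm{b}}(\mod^{\mathbb{Z}}(\Gamma^{\op}\otimes_K A))$ with $P$ graded-perfect over $A$. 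That bimodule-level triangle is again exactly what Lemma~\ref{reqtri_lem} supplies, so this step and the previous one are the same gap seen from two angles. Finally, two smaller inaccuracies: the grade-reduction functor $\underline{F_\ell}$ is \emph{not} surjective on objects in general (the correct statement is $\thick(\Im\underline{F_\ell})=\underline{\mod}^{\mathbb{Z}/\ell\mathbb{Z}}A$, Proposition~\ref{sma_tri}); and essential surjectivity of $\Psi$ is then deduced not from surjectivity of $\pi$ but from the generation statement \emph{together with fullness} via Lemma~\ref{uses_five}(2). The $\Hom$-splitting you cite for fully faithfulness is Proposition~\ref{Z_to_Z/aZ} and is correct, and the extension from the tilting object to all objects is handled by Lemma~\ref{uses_five}(1) as you anticipate; but you also need to know $\mathsf{D}(\Gamma,M[1])$ is Krull--Schmidt before applying that lemma, which the paper verifies separately in Proposition~\ref{der-orb_KS}.
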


Thanks to our assumption on Gorenstein parameter, we can give an algebra $\Gamma$ and a $\Gamma^{\op} \otimes_K \Gamma$-module $M$ quite explicitly.

As a special case of Theorem \ref{intro_thm3}, we have the following result.

\begin{cor}\label{intro_cor}
Let $\Lambda$ be an algebra, and $A$ be the trivial extension of $\Lambda$.
If $\Lambda$ has finite global dimension, then there exists a triangle-equivalence 
\[
\underline{\mod}A \simeq \mathsf{D}(\Lambda,D\Lambda[1]).
\]
\end{cor}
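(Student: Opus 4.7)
The plan is to deduce this corollary as a direct specialization of Theorem \ref{intro_thm3}. First I would verify the hypotheses of that theorem for the trivial extension. The algebra $A = \Lambda \oplus D\Lambda$ carries the natural positive grading with $A_0 = \Lambda$, $A_1 = D\Lambda$, and $A_i = 0$ for $i \geq 2$, and is well-known to be self-injective. The standing hypothesis $\gldim \Lambda < \infty$ gives $\gldim A_0 < \infty$. Moreover, the socle of $A$ viewed as a graded module over itself coincides with $\Soc_{\Lambda}(D\Lambda) \subseteq D\Lambda = A_1$, so the Gorenstein parameter equals $\ell = 1$.

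With $\ell = 1$ the group $\mathbb{Z}/\ell\mathbb{Z}$ is trivial, hence a $\mathbb{Z}/\ell\mathbb{Z}$-grading is no grading at all, and $\underline{\mod}^{\mathbb{Z}/\ell\mathbb{Z}}A = \underline{\mod}A$ canonically. Applying Theorem \ref{intro_thm3} therefore produces an algebra $\Gamma$ of finite global dimension, a $\Gamma^{\op} \otimes_K \Gamma$-module $M$, and a triangle-equivalence $\underline{\mod}A \simeq \mathsf{D}(\Gamma, M[1])$. So the statement reduces to identifying $\Gamma$ with $\Lambda$ and $M$ with $D\Lambda$.

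For this identification I would appeal to the explicit construction behind Theorem \ref{intro_thm3}: the algebra $\Gamma$ is produced as the endomorphism algebra of a canonical tilting object in $\underline{\mod}^{\mathbb{Z}}A$ arising from Theorem \ref{main_thm}, and $M$ is built to encode the auto-equivalence of $\mathsf{D}^{\mathrm{b}}(\mod \Gamma)$ that corresponds, under this tilting equivalence, to the grade-shift functor $(\ell)$ on $\underline{\mod}^{\mathbb{Z}}A$. For the trivial extension, the tilting object of Theorem \ref{main_thm} can be chosen so that the equivalence reduces to Happel's equivalence \eqref{intro_equation}, identifying $\Gamma = \Lambda$. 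Under this identification the grade shift $(1)$ should correspond to the Nakayama-type auto-equivalence $- \Lten_\Lambda D\Lambda[1]$, which forces $M = D\Lambda$ as a $\Lambda$-bimodule.

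The main obstacle is this last matching step: one must trace the constructions of $\Gamma$ and $M$ in the proof of Theorem \ref{intro_thm3} through the trivial-extension example, verify that the tilting object genuinely recovers Happel's equivalence, and check that the bimodule produced by the construction is $D\Lambda$ with precisely the shift $[1]$ dictated by $\ell = 1$. Everything else is a bookkeeping specialization.
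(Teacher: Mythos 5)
Your route — specialize Theorem \ref{intro_thm3} (Theorem \ref{hull_vs_graded}) to the trivial extension, note that the Gorenstein parameter is $\ell = 1$ so that $\mathbb{Z}/\ell\mathbb{Z}$-grading is trivial and $\underline{\mod}^{\mathbb{Z}/1\mathbb{Z}}A = \underline{\mod}A$, then identify $\Gamma$ and $M$ — is exactly the paper's. The only substantive difference lies in how you propose to identify the pair $(\Gamma,M)$ with $(\Lambda,D\Lambda)$, and you flag this as the unfinished step. You argue heuristically that the grade shift $(1)$ should correspond, under the tilting equivalence, to the Nakayama auto-equivalence $-\Lten_\Lambda D\Lambda[1]$, and that this ``forces'' $M = D\Lambda$; this is correct intuition but, as you say, not a proof, and would require a separate argument matching the grade shift with the Serre functor. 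The paper's identification is instead a direct read-off: Theorem \ref{hull_vs_graded} comes equipped with explicit matrix formulas $\Gamma = \End_A(\underline T)_0$ as in \eqref{df_Beilinson} and $M = \bigoplus_{i=\ell}^{2\ell-1}A(i)_{\geq 1-\ell}$ as in \eqref{df_M}, which for $\ell = 1$ collapse to the $1\times 1$ matrices $\Gamma = A_0 = \Lambda$ and $M = A_1 = D\Lambda$, with the bimodule structure visibly being the usual one. The paper also records, in Example \ref{sym_hull_vs_graded}, the more general fact that whenever $A$ is symmetric (as the trivial extension is) the bimodule $M$ of \eqref{df_M} is isomorphic to $D\Gamma$, via the symmetry isomorphism $A \simeq D(A(\ell))$ applied entrywise, and then deduces the corollary from this. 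So the ``main obstacle'' you describe is in fact a one-line computation once the explicit $M$ is in hand; there is no need to go through the Serre functor, and the identification $\Gamma = \Lambda$ is exactly the observation $\underline T = \Lambda$ already made in Example \ref{equ_Happel}.
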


This means that the stable category of the trivial extension of $\Lambda$ is triangle-equivalent to the ``$(-1)$-cluster category'' $\mathsf{D}(\Lambda,D\Lambda[1])$.

\bigskip

This paper is organized as follows. 
In section \ref{pre_section}, we recall basic facts on triangulated categories and representation theory of graded algebras which are used in the latter sections.

Our first main Theorem \ref{main_thm} is given in sections \ref{cal_section}.
In subsection \ref{existence_tilt_section}, we give a proof of Theorem \ref{main_thm} (1) $\Leftrightarrow$ (2) $\Leftrightarrow$ (3).
We show implication (1) $\Rightarrow$ (2) by constructing a tilting object $T$ in $\underline{\mod}^{\mathbb{Z}}A$ explicitly.
In subsections \ref{calculation_section} and \ref{global_section}, we give some properties of the endomorphism algebra $\Gamma$ of $T$. 
By applying them,  we give a proof of Theorem \ref{main_thm} (1) $\Leftrightarrow$ (4) in subsections \ref{tri_equ_section} and \ref{direct_section}. 
Then we give examples of Theorem \ref{main_thm}.
In section \ref{app_pp_section}, we give a new proof of a result \cite[Theorem 4.7]{IO2} on higher preprojective algebra as an application of Theorem \ref{main_thm}.
Moreover we discuss the case of classical preprojective algebras.

In section \ref{DG_section}, we explain basic definitions and facts about DG categories.
In particular we define derived-orbit categories which play a central role in section \ref{Gor_section}. 
Moreover we give a universal property of derived-orbit categories which plays an important role in the proof of Theorem \ref{intro_thm3}.

Our second main Theorem \ref{intro_thm3} is given in section \ref{Gor_section}.
For a positively graded self-injective algebra $A$ of Gorenstein parameter $\ell$, we give an algebra $\Gamma$ and a $\Gamma^{\op}\otimes_K\Gamma$-module $M$ in Theorem \ref{intro_thm3} explicitly.
Then we construct a triangle-equivalence $\mathsf{D}(\Gamma,M[1]) \rightarrow \underline{\mod}^{\mathbb{Z}/\ell\mathbb{Z}}A$ by applying a universal property of derived-orbit categories.
Finally we give examples of Theorem \ref{intro_thm3}.

\bigskip

\noindent \textbf{Conventions.} Throughout this paper, let $K$ be an algebraically closed field. 
An algebra means a finite dimensional associative algebra over $K$.
For an algebra $\Lambda$, $J_{\Lambda}$ means the Jacobson radical of $\Lambda$.
We always deal with finitely generated right modules. 
We denote by $\mod \Lambda$ the category of $\Lambda$-modules, and by $\proj \Lambda$ the category of projective $\Lambda$-modules. 
We denote the $K$-dual by $D:=\Hom_K(-,K)$.

For a functor $F:\mathcal{A} \rightarrow \mathcal{B}$ between categories $\mathcal{A}$ and $\mathcal{B}$, we denote by $\mathrm{Im}F$ the full subcategory of $\mathcal{B}$ which consists of $F(X)$ for $X \in \mathcal{A}$.

\section{Preliminaries}
\label{pre_section}

In this section, we recall basic facts on triangulated categories and representation theory of group graded self-injective algebras. 
In particular we recall tilting theorem for algebraic triangulated categories which plays an important role in the next section.

\subsection{Derived categories}
In this subsection, we recall basic facts about derived categories.
For background for triangulated categories or derived categories, we refer to \cite{Ha1,Har,Ne}.

For an algebra $\Lambda$, we denote by $\mathsf{K}^{\mathrm{b}}(\proj\Lambda)$ the homotopy category of bounded complexes over $\proj\Lambda$, 
and by $\mathsf{D}^{\mathrm{b}}(\mod\Lambda)$ the bounded derived category of $\mod\Lambda$.
They have structure of triangulated categories and the canonical embedding $\mathsf{K}^{\mathrm{b}}(\proj\Lambda) \rightarrow \mathsf{D}^{\mathrm{b}}(\mod\Lambda)$ is a triangle-functor.

They have properties defined as follows.

\begin{df}\label{df_Hom-finite} \ 
\begin{enumerate}
\def\labelenumi{(\theenumi)} 
\item A $K$-linear category $\mathcal{A}$ is called \emph{Hom-finite} if its morphism spaces are finite dimensional over $K$.
\item An additive category $\mathcal{A}$ is called \emph{Krull-Schmidt} if any object of $\mathcal{A}$ is decomposed into a finite direct sum of objects in $\mathcal{A}$ which have local endomorphism rings. 
\end{enumerate}
\end{df}

\begin{prop}\label{derived_KS}
For an algebra $\Lambda$, the homotopy category $\mathsf{K}^{\mathrm{b}}(\proj\Lambda)$ and the derived category $\mathsf{D}^{\mathrm{b}}(\mod\Lambda)$ are Hom-finite Krull-Schmidt categories.
\end{prop}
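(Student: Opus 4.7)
The plan is to reduce the statement to two ingredients: Hom-finiteness and splitting of idempotents. Granted both, the Krull-Schmidt property follows, because in any Hom-finite $K$-linear additive category the endomorphism ring of each object is a finite-dimensional $K$-algebra, hence semiperfect once idempotents split, which is enough to decompose every object into a finite direct sum of indecomposables with local endomorphism rings.

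For Hom-finiteness of $\mathsf{K}^{\mathrm{b}}(\proj\Lambda)$, I would observe that for bounded complexes $P^\bullet, Q^\bullet$ of finitely generated projective $\Lambda$-modules the space of chain maps sits inside the finite product $\prod_i \Hom_\Lambda(P^i, Q^i)$, which is finite-dimensional since $\Lambda$ is finite-dimensional over $K$; passing to the quotient by null-homotopic maps preserves finite-dimensionality. For $\mathsf{D}^{\mathrm{b}}(\mod\Lambda)$, the starting point is that $\Ext^i_\Lambda(M,N)$ is finite-dimensional for any $M, N \in \mod\Lambda$ and any $i \geq 0$: since $\Lambda$ is noetherian and $M$ is finitely generated, $M$ admits a projective resolution in $\proj\Lambda$, and $\Ext^i_\Lambda(M,N)$ is then a subquotient of the finite-dimensional space $\Hom_\Lambda(P^{-i}, N)$. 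For general $X^\bullet, Y^\bullet \in \mathsf{D}^{\mathrm{b}}(\mod\Lambda)$, I would induct on the total number of nonzero cohomologies of $X^\bullet$ and $Y^\bullet$, using the distinguished triangles coming from canonical truncations to reduce to the case of stalk complexes, and applying the long exact $\Hom$-sequences in $\mathsf{D}^{\mathrm{b}}(\mod\Lambda)$.

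For splitting of idempotents, $\mod\Lambda$ itself is Krull-Schmidt, so idempotents split in $\mod\Lambda$, and this property lifts to $\mathsf{D}^{\mathrm{b}}(\mod\Lambda)$ by a standard argument; for $\mathsf{K}^{\mathrm{b}}(\proj\Lambda)$ one can appeal to the Balmer--Schlichting theorem asserting that the bounded homotopy category of an idempotent complete additive category is again idempotent complete. The main obstacle I anticipate is the verification of Hom-finiteness in $\mathsf{D}^{\mathrm{b}}(\mod\Lambda)$ when $\Lambda$ has infinite global dimension, since one cannot assume a bounded projective resolution of $X^\bullet$; the inductive truncation argument bypasses this by expressing $\Hom_{\mathsf{D}^{\mathrm{b}}}(X^\bullet, Y^\bullet)$ as a repeated extension of the finite-dimensional Ext groups $\Ext^i_\Lambda(H^j(X^\bullet), H^k(Y^\bullet))$.
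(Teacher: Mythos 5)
The paper states this proposition without proof, treating it as standard background (essentially folklore for finite-dimensional algebras), so there is no proof in the text to compare against. Your argument is the standard one and is correct in substance. Two small points worth tightening. First, the phrase ``hence semiperfect once idempotents split'' conflates two separate facts: a finite-dimensional $K$-algebra is \emph{always} semiperfect, with no hypothesis about the ambient category; what idempotent completeness of the category buys you is that the primitive orthogonal idempotents of $\End(X)$ actually realize $X$ as a finite direct sum of objects with local endomorphism rings. Second, for idempotent completeness of $\mathsf{D}^{\mathrm{b}}(\mod\Lambda)$ you gesture at ``a standard argument''; the cleanest route is to view $\mathsf{D}^{\mathrm{b}}(\mod\Lambda)$ as the full subcategory of the unbounded derived category $\mathsf{D}(\Mod\Lambda)$ consisting of complexes with bounded, finite-dimensional total cohomology --- this subcategory is visibly closed under direct summands, and $\mathsf{D}(\Mod\Lambda)$ is idempotent complete because it admits all small coproducts (B\"okstedt--Neeman); alternatively one can invoke the theorem of Le and Chen that $\mathsf{D}^{\mathrm{b}}(\mathcal{A})$ is idempotent complete for any abelian category $\mathcal{A}$. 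With either of these inserted, the proof is complete.
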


We have equivalent conditions for finiteness of global dimension below.
We refer to \cite{RVdB} for the details of Serre functors, and refer to \cite[Chapter I.4]{Ha1} for the details of Auslander-Reiten triangles.


\begin{thm} \cite[Corollary 1.5]{Ha2} \cite[Theorem I.2.4]{RVdB} \label{Serre=gldim}
Let $\Lambda$ be an algebra. The following conditions are equivalent. 
\begin{enumerate}
\def\labelenumi{(\theenumi)} 
\item $\Lambda$ has finite global dimension.
\item The canonical embedding $\mathsf{K}^{\mathrm{b}}(\proj \Lambda) \rightarrow \mathsf{D}^{\mathrm{b}}(\mod\Lambda)$ is an equivalence.
\item $\mathsf{D}^{\mathrm{b}}(\mod\Lambda)$ has a Serre functor.
\item $\mathsf{D}^{\mathrm{b}}(\mod\Lambda)$ has Auslander-Reiten triangles.
\end{enumerate}
If these conditions are satisfied, a Serre functor is given by $-\Lten_{\Lambda}D\Lambda$, and the Auslander-Reiten translation is given by $-\Lten_{\Lambda}D\Lambda[-1]$.
\end{thm}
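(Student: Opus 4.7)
The plan is to prove the cycle $(1)\Rightarrow(2)\Rightarrow(3)\Rightarrow(4)\Rightarrow(1)$. The equivalence $(3)\Leftrightarrow(4)$, together with the formula $\tau = S\circ[-1]$, is supplied directly by the Reiten--Van den Bergh theorem \cite[Theorem I.2.4]{RVdB}, whose Hom-finiteness and Krull-Schmidt hypotheses are verified by Proposition \ref{derived_KS}.

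For $(1)\Rightarrow(2)$, I would invoke the classical fact that when $\gldim\Lambda<\infty$ every bounded complex of $\Lambda$-modules admits a quasi-isomorphism from a bounded complex of finitely generated projectives, obtained by splicing finite projective resolutions of its cohomology modules; fully faithfulness of the canonical embedding is the standard assertion that morphisms between bounded complexes of projectives coincide in $\mathsf{K}^{\mathrm{b}}$ and $\mathsf{D}^{\mathrm{b}}$. For $(2)\Rightarrow(3)$, I would verify that the Nakayama functor $\nu := -\otimes_\Lambda D\Lambda$, extended componentwise to $\mathsf{K}^{\mathrm{b}}(\proj\Lambda)$, satisfies the Serre-duality isomorphism $\Hom_\Lambda(P,Q)\simeq D\Hom_\Lambda(Q,\nu P)$ for $P,Q\in\proj\Lambda$ (which follows from $\Hom_\Lambda(P,Q)\simeq Q\otimes_\Lambda\Hom_\Lambda(P,\Lambda)$ together with $D$-duality), yielding a Serre functor on $\mathsf{K}^{\mathrm{b}}(\proj\Lambda)$. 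Transport along the equivalence in $(2)$ then produces a Serre functor on $\mathsf{D}^{\mathrm{b}}(\mod\Lambda)$ whose derived description is $-\Lten_\Lambda D\Lambda$, and hence the AR translation is $-\Lten_\Lambda D\Lambda[-1]$.

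The main obstacle is $(4)\Rightarrow(1)$, which is essentially \cite[Corollary 1.5]{Ha2}. My plan is the following. From the existence of AR triangles, Reiten--Van den Bergh provides a Serre functor $S$, making $\mathsf{D}^{\mathrm{b}}(\mod\Lambda)$ a triangle autoequivalence of itself. Since any autoequivalence preserves the intrinsic thick subcategory $\mathsf{K}^{\mathrm{b}}(\proj\Lambda)\subseteq\mathsf{D}^{\mathrm{b}}(\mod\Lambda)$ of perfect complexes (characterized by the vanishing of $\Hom(-,-[n])$ into arbitrary objects for $|n|\gg 0$), both $S$ and $S^{-1}$ restrict to autoequivalences of $\mathsf{K}^{\mathrm{b}}(\proj\Lambda)$. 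On that subcategory $S$ necessarily coincides with $-\otimes_\Lambda D\Lambda$, so the image condition forces $D\Lambda$ to be perfect as a right $\Lambda$-module, and the symmetric analysis via $S^{-1}$ yields the corresponding statement on the opposite side. To upgrade these Gorenstein-type finiteness statements to $\gldim\Lambda<\infty$, I would next compare the AR triangle ending at each simple $\Lambda$-module $T$ with the module-level AR sequence of $T$, using the Serre duality $\Hom(X,S(T))\simeq D\Hom(T,X)$ to pin down $S(T)$ as a specific shifted module; consistency of this identification for all $X\in\mathsf{D}^{\mathrm{b}}(\mod\Lambda)$ forces each simple to have finite projective dimension, whence $\gldim\Lambda<\infty$.
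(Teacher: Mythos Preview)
The paper does not supply its own proof of this theorem: it is stated in the preliminaries with citations to \cite[Corollary 1.5]{Ha2} and \cite[Theorem I.2.4]{RVdB} and is used thereafter as a black box. So there is nothing to compare your argument against except the original sources.

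Your treatment of $(1)\Rightarrow(2)\Rightarrow(3)$ and the appeal to \cite{RVdB} for $(3)\Leftrightarrow(4)$ is standard and correct, and the identification of the Serre functor with $-\Lten_\Lambda D\Lambda$ via the Nakayama functor on $\mathsf{K}^{\mathrm{b}}(\proj\Lambda)$ is exactly how one proceeds.

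The genuine gap is in $(4)\Rightarrow(1)$. Your argument correctly reaches the Gorenstein conclusion: a Serre functor forces $\mathsf{K}^{\mathrm{b}}(\proj\Lambda)=\mathsf{K}^{\mathrm{b}}(\inj\Lambda)$, hence $D\Lambda$ is perfect. But the final step---``consistency of this identification for all $X$ forces each simple to have finite projective dimension''---is not an argument. There exist self-injective (hence Gorenstein) algebras of infinite global dimension, so perfection of $D\Lambda$ alone cannot close the gap, and comparing an AR triangle ending at a simple $T$ with a module-level AR sequence does not obviously constrain $\mathrm{pd}\,T$. Happel's actual proof in \cite{Ha2} goes the other way: assuming $\gldim\Lambda=\infty$, he picks a simple $T$ with $\mathrm{pd}\,T=\infty$ and shows directly that no AR triangle in $\mathsf{D}^{\mathrm{b}}(\mod\Lambda)$ can end at $T$, using that the connecting morphism of such a triangle would have to represent a functor which, because $\Ext^n_\Lambda(T,-)$ is nonzero for arbitrarily large $n$, cannot be corepresented by a bounded complex. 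If you want a self-contained proof, you should either reproduce that obstruction argument or, on the Serre-functor side, show concretely that $S^{-1}(T)$ being a bounded complex forces $\Ext^n_\Lambda(-,T)$ to vanish for $n\gg 0$ uniformly, which amounts to the same thing.
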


\subsection{Algebraic triangulated categories}
\label{tilting_section}

In this subsection,  we recall tilting theorem for algebraic triangulated categories which is due to Keller \cite{Ke1}. 
It provides us a method to compare given triangulated categories with derived categories of rings.

First let us recall the definitions of Frobenius categories and its stable categories.
We refer to \cite{Ke3,Qu} for more axiomatic definition of Frobenius categories.

\begin{df}\cite{He} \cite[Chapter I. 2]{Ha1} \label{df_Frobenius}
Let $\mathcal{A}$ be an abelian category. 
\begin{enumerate}
\def\labelenumi{(\theenumi)} 
\item A full subcategory $\mathcal{B}$ of $\mathcal{A}$ is called \emph{extension closed} if for any short exact sequence $0 \rightarrow X \rightarrow Y \rightarrow Z \rightarrow 0$, the condition $X, Z \in \mathcal{B}$ implies $Y \in \mathcal{B}$.
\item Let $\mathcal{B}$ be an extension closed full subcategory of $\mathcal{A}$. 
An object $X$ in $\mathcal{B}$ is called \emph{relative-projective} (respectively, \emph{relative-injective}) if $\Ext^1_{\mathcal{A}}(X,\mathcal{B})=0$ (respectively, $\Ext^1_{\mathcal{A}}(\mathcal{B},X)=0$) holds.
\item An extension closed full subcategory $\mathcal{B}$ of $\mathcal{A}$ is called \emph{Frobenius} if it satisfies the following conditions. 
\begin{enumerate}
\def\labelenumi{(\theenumi)} 
\item For any object $X \in \mathcal{B}$, there exist exact sequences $0 \rightarrow Y \rightarrow P \rightarrow X \rightarrow 0$ and $0 \rightarrow X \rightarrow I \rightarrow Z \rightarrow 0$ in $\mathcal{A}$ such that each term is in $\mathcal{B}$, $P$ is relative-projective, and $I$ is relative-injective.
\item Relative-projective objects coincide with relative-injective objects.
\end{enumerate}
\end{enumerate}
\end{df}

\begin{df}
For a Frobenius category $\mathcal{B}$, the \emph{stable category} $\underline{\mathcal{B}}$ of $\mathcal{B}$ is defined as follows. 
\begin{list}{$\bullet$}{}
\item The objects are the same as $\mathcal{B}$.
\item For $X,Y \in \mathcal{B}$, the morphism set from $X$ to $Y$ is defined by the factor 
\[
\Hom_{\underline{\mathcal{B}}}(X,Y)=\underline{\Hom}_{\mathcal{B}}(X,Y):=\Hom_{\mathcal{B}}(X,Y)/\mathcal{P}(X,Y)
\]
where $\mathcal{P}(X,Y)$ is the subset of $\Hom_{\mathcal{B}}(X,Y)$ which consists of morphisms factoring through projective objects.
\end{list}
\end{df}

The stable category of a Frobenius category has a natural structure of a triangulated category (see \cite[Chapter I. 2]{Ha1}).

\begin{df}\cite{Ha1,Ke3,Kra}\label{df_algebraic}
A triangulated category $\mathcal{T}$ is \emph{algebraic} if it is triangle-equivalent to the stable category of some Frobenius category.
\end{df}

The class of algebraic triangulated categories contains the following important examples (see also Lemma \ref{Gra-Fro}).

\begin{ex} 
Let $\Lambda$ be an algebra. 
The category $\mathsf{C}^{\mathrm{b}}(\proj \Lambda)$ of bounded complexes of projective $\Lambda$-modules has a structure of a Frobenius category. 
The stable category of $\mathsf{C}^{\mathrm{b}}(\proj \Lambda)$ coincides with $\mathsf{K}^{\mathrm{b}}(\proj \Lambda)$. 
So $\mathsf{K}^{\mathrm{b}}(\proj \Lambda)$ is an algebraic triangulated category (cf. \cite[Chapter I, 3 .2]{Ha1}).
\end{ex}

In tilting theory, an important role is played by tilting objects defined as below.

\begin{df}\label{df_thick}
Let $\mathcal{T}$ be a triangulated category. 
\begin{enumerate}
\def\labelenumi{(\theenumi)} 
\item A \emph{thick subcategory} of $\mathcal{T}$ is a triangulated full subcategory of $\mathcal{T}$ which is closed under direct summands and isomorphisms. 
\item For a full subcategory $\mathcal{X}$ (respectively, object $X$) of $\mathcal{T}$, we define $\thick_{\mathcal{T}} \mathcal{X}$ (respectively, $\thick_{\mathcal{T}}X$) as the smallest thick subcategory of $\mathcal{T}$ which contains $\mathcal{X}$ (respectively, $X$).
We say that a full subcategory $\mathcal{X}$ of $\mathcal{T}$ \emph{generates} $\mathcal{T}$ \emph{as a triangulated category} if $\thick \mathcal{X}=\mathcal{T}$ holds.
\end{enumerate}
\end{df}

\begin{df}\cite{Ke1,Ric1}\label{df_tilting}
Let $\mathcal{T}$ be a triangulated category. 
An object $T \in \mathcal{T}$ is called \emph{tilting} if it satisfies the following conditions.
\begin{enumerate}
\def\labelenumi{(\theenumi)} 
\item $\Hom_{\mathcal{T}}(T,T[i])=0$ for any $i \neq 0$.
\item $\mathcal{T}=\thick_{\mathcal{T}} T$.
\end{enumerate}
\end{df}

The following is a typical example of tilting objects.

\begin{ex}\label{ex_tilt}
Let $\Lambda$ be a ring. 
We regard $\Lambda$ as a complex concentrated in degree $0$.
Then $\Lambda$ is a tilting object in $\mathsf{K}^{\mathrm{b}}(\proj \Lambda)$.
\end{ex}

Now we state Keller's tilting theorem.
We refer to  \cite[Appendix]{IT} for the following form.

\begin{thm}\label{Keller} \cite{BK} \cite[Theorem 4.3]{Ke1}
Let $\mathcal{T}$ be an algebraic triangulated Krull-Schmidt category.
If $\mathcal{T}$ has a tilting object $T$, then there exists a triangle-equivalence
\[
\mathcal{T} \simeq \mathsf{K}^{\mathrm{b}}(\proj\End_{\mathcal{T}}(T)).
\]
\end{thm}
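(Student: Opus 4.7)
The plan is to use a DG enhancement of $\mathcal{T}$, which exists by the hypothesis that $\mathcal{T}$ is algebraic. First I would write $\mathcal{T} \simeq \underline{\mathcal{B}}$ for some Frobenius category $\mathcal{B}$, lift the tilting object $T$ to an object $\widetilde{T} \in \mathcal{B}$, and pass to a standard DG enhancement $\mathcal{B}_{\dg}$ (for instance the DG quotient $\mathsf{C}_{\dg}^{\mathrm{b}}(\mathcal{B})/\mathsf{C}_{\dg}^{\mathrm{b}}(\mathcal{P})$, where $\mathcal{P}$ denotes the class of projective-injective objects) so that $H^0(\mathcal{B}_{\dg}) \simeq \mathcal{T}$. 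Set $\Gamma := \End_{\mathcal{T}}(T)$ and form the DG endomorphism algebra $\mathcal{E} := \End_{\mathcal{B}_{\dg}}(\widetilde{T})$; by construction $H^i(\mathcal{E}) = \Hom_{\mathcal{T}}(T,T[i])$.

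Next I would invoke the tilting hypothesis $\Hom_{\mathcal{T}}(T,T[i]) = 0$ for $i \neq 0$ to conclude that the cohomology of $\mathcal{E}$ is concentrated in degree $0$ with $H^0(\mathcal{E}) = \Gamma$. This makes $\mathcal{E}$ quasi-isomorphic as a DG algebra to $\Gamma$ (viewed as a DG algebra concentrated in degree $0$), and hence induces a triangle-equivalence on perfect derived categories
\[
\per(\mathcal{E}) \simeq \per(\Gamma) = \mathsf{K}^{\mathrm{b}}(\proj \Gamma),
\]
the last identification using that $\Gamma$ is a finite-dimensional algebra.

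Then I would consider the triangle functor $F := \RHom_{\mathcal{B}_{\dg}}(\widetilde{T},-) \colon \mathcal{T} \longrightarrow \mathsf{D}(\mathcal{E}) \simeq \mathsf{D}(\Gamma)$, which sends $T$ to $\Gamma$ and induces isomorphisms $\Hom_{\mathcal{T}}(T,T[i]) \xrightarrow{\sim} \Hom_{\mathsf{D}(\Gamma)}(\Gamma,\Gamma[i])$ for every $i$ by construction. A standard thick-subcategory argument shows that the full subcategory of $X \in \mathcal{T}$ for which $F$ induces bijections $\Hom_{\mathcal{T}}(T[i],X) \xrightarrow{\sim} \Hom_{\mathsf{D}(\Gamma)}(\Gamma[i],F(X))$ for all $i$ is triangulated and closed under direct summands (here the Krull-Schmidt hypothesis is used); since this subcategory contains $T$ and $\thick_{\mathcal{T}}T = \mathcal{T}$, the functor $F$ is fully faithful on all of $\mathcal{T}$. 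Dually, the essential image is a thick subcategory of $\mathsf{D}(\Gamma)$ containing $\Gamma$, so it contains, and in fact equals, $\per(\Gamma) = \mathsf{K}^{\mathrm{b}}(\proj \Gamma)$.

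The main obstacle will be the DG-categorical foundations, namely producing a functorial DG enhancement of $\underline{\mathcal{B}}$ and verifying the formality $\mathcal{E} \simeq \Gamma$ together with the identification $\per(\mathcal{E}) \simeq \mathsf{K}^{\mathrm{b}}(\proj \Gamma)$; this is the step where the Krull-Schmidt and Hom-finiteness assumptions are genuinely needed to pin down the image as $\mathsf{K}^{\mathrm{b}}(\proj \Gamma)$ rather than merely a thick closure in a larger category. Once these foundational points are in place, the fully-faithfulness and essential-surjectivity conclusions are formal thick-subcategory bookkeeping, and Keller's framework \cite{Ke1} supplies exactly the machinery needed.
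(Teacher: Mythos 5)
The paper does not prove this theorem; it is cited to Bondal--Kapranov and to Keller \cite[Theorem 4.3]{Ke1}, with a pointer to the appendix of \cite{IT} for the Krull--Schmidt formulation, so there is no in-paper proof to compare against. Your sketch reproduces the standard argument from those references: pass to a DG enhancement, form the DG endomorphism algebra of a lift of $T$, use $\Hom_{\mathcal{T}}(T,T[i])=0$ for $i\neq 0$ to reduce (by truncation) to a DG algebra quasi-isomorphic to $\Gamma=\End_{\mathcal{T}}(T)$, and then transfer by $\RHom(\widetilde{T},-)$ together with a d\'evissage along $\thick T = \mathcal{T}$. That is the right shape of the proof.

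Two small corrections. First, the concrete enhancement you offered ``for instance'' is not correct: $\mathrm{H}^0$ of the DG quotient $\mathsf{C}_{\dg}^{\mathrm{b}}(\mathcal{B})/\mathsf{C}_{\dg}^{\mathrm{b}}(\mathcal{P})$ is the Verdier quotient $\mathsf{K}^{\mathrm{b}}(\mathcal{B})/\mathsf{K}^{\mathrm{b}}(\mathcal{P})$, which for a Frobenius category is in general strictly larger than $\underline{\mathcal{B}}$ (the Keller--Vossieck/Rickard identification is $\mathsf{D}^{\mathrm{b}}(\mathcal{B})/\mathsf{K}^{\mathrm{b}}(\mathcal{P})\simeq\underline{\mathcal{B}}$, with the derived category of the abelian/exact structure rather than the homotopy category). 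A correct DG model is the DG category of $\mathcal{B}$-acyclic complexes over the projective-injectives, i.e.\ complete resolutions; with that replacement the rest of your argument goes through verbatim. Second, you invoke Krull--Schmidt when arguing that the ``fully faithful locus'' is closed under direct summands, but that closure is automatic from additivity of $\Hom$; the Krull--Schmidt hypothesis is genuinely needed only where you use it later, namely to guarantee that $\mathcal{T}$ is idempotent complete so that the essential image of the fully faithful triangle functor is a thick subcategory, and hence equal to $\thick_{\mathsf{D}(\Gamma)}\Gamma=\mathsf{K}^{\mathrm{b}}(\proj\Gamma)$ rather than merely a dense triangulated subcategory of it.
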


Thus it is a basic problem in the study of algebraic triangulated categories to find tilting objects.
We study this for the stable categories of self-injective algebras in the subsection \ref{existence_tilt_section}.

In the last of this subsection, we give the following easy observations for general triangulated categories.

\begin{lem}\label{uses_five_Hom}
Let $\mathcal{T}$ be a $K$-linear triangulated category, and  $\mathcal{X}$ be a full subcategory of $\mathcal{T}$ which is closed under shifts $[\pm 1]$ and satisfies $\thick_{\mathcal{T}}{\mathcal{X}}=\mathcal{T}$. 
If $\mathcal{X}$ is Hom-finite, then so is $\mathcal{T}$.
\end{lem}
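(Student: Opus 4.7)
The plan is to use a standard ``thick subcategory argument'' in each variable of $\Hom_{\mathcal{T}}(-,-)$, propagating Hom-finiteness from $\mathcal{X}$ to all of $\mathcal{T}$ by exploiting the long exact sequence of $\Hom$ applied to distinguished triangles.

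First I would fix the second variable by setting
\[
\mathcal{Y} := \{Y \in \mathcal{T} \mid \dim_K \Hom_{\mathcal{T}}(X,Y) < \infty \text{ for every } X \in \mathcal{X}\}.
\]
Since $\mathcal{X}$ is Hom-finite, we have $\mathcal{X} \subseteq \mathcal{Y}$. I claim $\mathcal{Y}$ is a thick subcategory. Closure under $[\pm 1]$ is immediate from the adjunction $\Hom_{\mathcal{T}}(X, Y[\pm 1]) \cong \Hom_{\mathcal{T}}(X[\mp 1], Y)$ together with the hypothesis that $\mathcal{X}$ is closed under $[\pm 1]$. Closure under direct summands is clear. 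For closure under triangles, given a triangle $Y' \to Y \to Y'' \to Y'[1]$ with $Y', Y'' \in \mathcal{Y}$ and any $X \in \mathcal{X}$, the three-term exact sequence
\[
\Hom_{\mathcal{T}}(X,Y') \to \Hom_{\mathcal{T}}(X,Y) \to \Hom_{\mathcal{T}}(X,Y'')
\]
has finite-dimensional outer terms, forcing the middle to be finite-dimensional. Hence $\mathcal{Y}$ is a thick subcategory containing $\mathcal{X}$, and the hypothesis $\thick_{\mathcal{T}}\mathcal{X} = \mathcal{T}$ gives $\mathcal{Y} = \mathcal{T}$.

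Next I would run the analogous argument in the first variable: define
\[
\mathcal{Z} := \{X \in \mathcal{T} \mid \dim_K \Hom_{\mathcal{T}}(X,Y) < \infty \text{ for every } Y \in \mathcal{T}\}.
\]
The first step says exactly that $\mathcal{X} \subseteq \mathcal{Z}$. Using $\Hom_{\mathcal{T}}(X[\pm 1], Y) \cong \Hom_{\mathcal{T}}(X, Y[\mp 1])$ (and the fact that $\mathcal{T}$ is of course closed under shifts), $\mathcal{Z}$ is closed under $[\pm 1]$; closure under direct summands is obvious; and closure under triangles follows, for each fixed $Y$, from the same three-term Hom exact sequence in the contravariant variable. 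So $\mathcal{Z}$ is thick, contains $\mathcal{X}$, and therefore equals $\mathcal{T}$, which is exactly the statement that $\mathcal{T}$ is Hom-finite.

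There is no real obstacle here: the only thing to be careful about is not conflating the two runs of the argument, since the finite-dimensionality in the first run must be upgraded from ``for all $X \in \mathcal{X}$'' to ``for all $X \in \mathcal{T}$'' before the second run makes sense. The closure of $\mathcal{X}$ under $[\pm 1]$ is used crucially to ensure the intermediate subcategory $\mathcal{Y}$ is itself closed under shifts without having to enlarge $\mathcal{X}$.
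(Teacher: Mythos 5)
The paper states this lemma as an ``easy observation'' and supplies no proof, so there is nothing of the author's to compare you against. Your two-pass argument --- first showing that the objects $Y$ with $\dim_K\Hom_{\mathcal T}(X,Y)<\infty$ for all $X\in\mathcal X$ form a thick subcategory containing $\mathcal X$, then repeating in the contravariant variable after upgrading the quantifier --- is the standard way to prove this and is correct. Two small points worth having in mind: (i) you verify closure of $\mathcal Y$ under triangles only in the case where the two outer terms lie in $\mathcal Y$, but since $\mathcal Y$ is closed under $[\pm1]$, rotating the triangle handles the other two cases, so this is harmless; (ii) the step ``finite-dimensional outer terms force the middle to be finite-dimensional'' in a three-term exact sequence is correct and is exactly where $K$-linearity is used. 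The remark you make at the end about not conflating the two runs is exactly the crucial logical point, and you have it right.
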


\begin{lem}\label{uses_five}
Let $\mathcal{T}$, $\mathcal{U}$ be triangulated categories, and $\mathcal{X}$ be a full subcategory of $\mathcal{T}$ which is closed under shifts $[\pm 1]$.
For a triangle-functor $F:\mathcal{T} \rightarrow \mathcal{U}$, the following assertions hold.
\begin{enumerate}
\def\labelenumi{(\theenumi)}
\item If $\thick\mathcal{X}=\mathcal{T}$ and the restriction functor $F |_{\mathcal{X}}$ is fully faithful, then so is $F$.
\item If $\mathcal{T}$ is Krull-Schmidt, $\thick(\mathrm{Im}F)=\mathcal{U}$ and $F$ is full, then $F$ is dense.
\end{enumerate}
\end{lem}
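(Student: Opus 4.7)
The strategy for both parts is d\'evissage: a property (full faithfulness in (1), thickness of the essential image in (2)) is verified on a generating subcategory and then extended to all of $\mathcal{T}$ by closure under cones, shifts, and summands. The shift-closure of $\mathcal{X}$ in (1) is precisely what makes the five lemma applicable to the shifted Hom long exact sequences below.

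For (1), fix $X_0 \in \mathcal{T}$ and set
$$
\mathcal{Y}(X_0) = \{\, Y \in \mathcal{T} \mid F \colon \Hom_{\mathcal{T}}(X_0, Y[n]) \to \Hom_{\mathcal{U}}(FX_0, FY[n]) \text{ is bijective for every } n \in \mathbb{Z} \,\}.
$$
A triangle $Y' \to Y \to Y'' \to Y'[1]$ in $\mathcal{T}$ yields long exact Hom-sequences in $\mathcal{T}$ and $\mathcal{U}$ related by $F$; the five lemma shows $\mathcal{Y}(X_0)$ is closed under cones, and it is manifestly closed under shifts and summands, hence is thick. When $X_0 \in \mathcal{X}$, the shift-closure of $\mathcal{X}$ and the full faithfulness of $F|_{\mathcal{X}}$ give $\mathcal{X} \subseteq \mathcal{Y}(X_0)$, so $\mathcal{Y}(X_0) = \thick \mathcal{X} = \mathcal{T}$. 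A symmetric d\'evissage in the first variable then extends the bijection to arbitrary $X_0 \in \mathcal{T}$.

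For (2), let $\mathcal{V} \subseteq \mathcal{U}$ be the full subcategory of objects isomorphic to some $FX$. Since $\mathrm{Im} F \subseteq \mathcal{V}$ and $\thick (\mathrm{Im} F) = \mathcal{U}$ by hypothesis, it suffices to show that $\mathcal{V}$ is a thick subcategory. Closure under shifts is immediate from $F$ being a triangle functor. Closure under cones follows from fullness: any morphism $FX_1 \to FX_2$ in $\mathcal{U}$ equals $F(f)$ for some $f \colon X_1 \to X_2$, and $F$ sends the completing triangle of $f$ in $\mathcal{T}$ to a triangle in $\mathcal{U}$, exhibiting the third vertex in $\mathrm{Im} F \subseteq \mathcal{V}$.

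The main obstacle, and the step that consumes the Krull-Schmidt hypothesis on $\mathcal{T}$, is closure of $\mathcal{V}$ under direct summands. Given $FX \cong Y \oplus Z$ with idempotent projection $e \in \End_{\mathcal{U}}(FX)$, fullness produces $\bar e \in \End_{\mathcal{T}}(X)$ with $F(\bar e) = e$, but $\bar e$ need not itself be idempotent. The Krull-Schmidt assumption makes $\End_{\mathcal{T}}(X)$ a semiperfect ring in which idempotents split, and my plan is to combine this with a Fitting-type analysis of $\bar e$ to produce an honest idempotent $\tilde e \in \End_{\mathcal{T}}(X)$ with $F(\tilde e) = e$; splitting $\tilde e$ in $\mathcal{T}$ then gives $X \cong X' \oplus X''$ with $Y \cong FX' \in \mathcal{V}$. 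Verifying that Krull-Schmidt is really enough to perform this idempotent lift along a surjection of endomorphism rings is the only genuinely technical point.
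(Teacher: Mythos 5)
The paper states Lemma \ref{uses_five} without proof, so there is nothing to compare your argument against; I will just assess it on its own terms.

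Part (1) is correct as written: for fixed $X_0\in\mathcal{X}$ the class $\mathcal{Y}(X_0)$ is closed under shifts and summands by inspection, and under cones by the five lemma applied to the two long exact $\Hom$-sequences, and the shift-closure of $\mathcal{X}$ is exactly what guarantees $\mathcal{X}\subseteq\mathcal{Y}(X_0)$; the symmetric d\'evissage in the first variable then finishes.

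For part (2), the overall plan --- show the essential image $\mathcal{V}$ is a thick subcategory, then invoke $\thick(\IM F)=\mathcal{U}$ --- is the right one, and closure under shifts and cones is exactly as you say. Your worry about closure under summands is well placed, but the tool you propose, a ``Fitting-type analysis'' of the lift $\bar e$, does not obviously succeed: Fitting-type arguments produce idempotents from almost-idempotents when the error $\bar e^2-\bar e$ is nilpotent, and the kernel of the surjection $\End_{\mathcal{T}}(X)\twoheadrightarrow\End_{\mathcal{U}}(FX)$ has no reason to be a nil ideal in general. What actually makes the lift work is that $\End_{\mathcal{T}}(X)$, being semiperfect, is an \emph{exchange ring} in the sense of Nicholson, and exchange rings have the property that idempotents lift modulo every (one- or two-sided) ideal, hence along every surjective ring map; alternatively, one can lift an idempotent of $\End_{\mathcal{U}}(FX)$ only up to conjugacy, which is still enough because conjugate idempotents split to isomorphic summands. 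If you prefer to avoid exchange-ring theory altogether, there is a more direct route that replaces the idempotent lift entirely: write $X=\bigoplus_{i}X_i$ with each $X_i$ indecomposable; since $F$ is full, $\End_{\mathcal{U}}(FX_i)$ is a quotient of the local ring $\End_{\mathcal{T}}(X_i)$, hence zero or local, so $FX=\bigoplus_i FX_i$ is (after discarding zero terms) a finite decomposition into objects with local endomorphism rings; by Azumaya's exchange theorem, any direct summand of such a direct sum is isomorphic to a partial sum $\bigoplus_{i\in S}FX_i=F\bigl(\bigoplus_{i\in S}X_i\bigr)\in\IM F$. Either repair makes part (2) complete.
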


\subsection{Group graded algebras}

In this subsection, we recall basic facts on representation theory for group graded algebras.
We refer to \cite{GG1,GG2} for details in the case $G=\mathbb{Z}$.

We start with setting notations.
Let $G$ be an abelian group, and $A=\bigoplus_{i \in G}A_i$ be a $G$-graded algebra.
For a $G$-graded $A$-module $X$, we write $X_i$ the degree $i$ part of $X$.

In the following, we give basic properties of the category $\mathrm{mod}^{G}A$ of $G$-graded $A$-modules, which is defined as follows.

\begin{list}{$\bullet$}{}
\item The objects are $G$-graded $A$-modules.
\item For $G$-graded $A$-modules $X$ and $Y$, the morphism space from $X$ to $Y$ in $\mod^GA$ is defined by
\[
\Hom_A(X,Y)_0:=\{f \in \Hom_A(X,Y) \ | \ \mbox{$f(X_i) \subset Y_i$ for any $i \in G$} \}.
\]
\end{list}
We denote by $\proj^{G}A$ the full subcategory of $\mod^{G}A$ consisting of projective objects.

We recall that $\mathrm{mod}^{G}A$ has two important functors.
The first one is the grade shift functor.
For $i \in G$, we denote by 
\[
(i): \mathrm{mod}^{G}A \rightarrow \mathrm{mod}^{G}A
\]
\emph{the grade shift functor}, which is defined as follows. For a $G$-graded $A$-module $X$, 
\begin{list}{$\bullet$}{}
\item $X(i):=X$ as an $A$-module,
\item $G$-grading on $X(i)$ is defined by $X(i)_j:=X_{j+i}$ for any $j \in G$.
\end{list}
This is an autofunctor of $\mod^{G}A$ whose inverse is $(-i)$.

The second one is the $K$-dual.
The standard duality
\[
D := \Hom_{K}(-,K): \mathrm{mod} A \rightarrow \mathrm{mod} A^{\op}
\]
induces the following duality.
For  a $G$-graded $A$-module $X$, we regard $DX$ as a $G$-graded $A^{\op}$-module by defining $(DX)_i:=D(X_{-i})$ for any $i \in G$.
Then we have the duality
\[
D:  \mathrm{mod}^{G}A \rightarrow \mathrm{mod}^{G}A^{\op}.
\]

The following result gives basic properties of $\mod^GA$.

\begin{prop}\label{graded_KS}
$\mod^{G}A$ is a Hom-finite Krull-Schmidt category
\end{prop}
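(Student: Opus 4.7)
The plan is to verify the two properties separately, both via elementary arguments using that $A$ is a finite-dimensional algebra over $K$ and that $G$-grading is compatible with idempotent decompositions.

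For Hom-finiteness, the main observation is that any finitely generated graded $A$-module $X$ is in particular finitely generated as an (ungraded) $A$-module, hence finite-dimensional over $K$ since $A$ is. Therefore $\Hom_A(X,Y)$ is a finite-dimensional $K$-vector space for any $X,Y \in \mod^{G}A$, and the morphism space $\Hom_A(X,Y)_0$ in $\mod^{G}A$ is a $K$-linear subspace of it, so it is finite-dimensional.

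For the Krull-Schmidt property, I would combine the finite-dimensionality of endomorphism rings with the fact that idempotents split in $\mod^{G}A$. First, for any $X \in \mod^{G}A$, the ring $\End_{\mod^{G}A}(X) = \End_A(X)_0$ is a finite-dimensional $K$-algebra by the previous paragraph, hence in particular semiperfect. Secondly, any idempotent $e \in \End_A(X)_0$ preserves the $G$-grading, so the ungraded decomposition $X = eX \oplus (1-e)X$ is automatically a decomposition in $\mod^{G}A$; that is, idempotents split in $\mod^{G}A$.

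Combining these, I would conclude as follows: write $1 = e_1 + \cdots + e_n$ as a sum of primitive orthogonal idempotents in the finite-dimensional algebra $\End_A(X)_0$, which gives a decomposition $X = e_1 X \oplus \cdots \oplus e_n X$ in $\mod^{G}A$. For each summand, $\End_{\mod^{G}A}(e_i X) \cong e_i \End_A(X)_0 e_i$ is again a finite-dimensional $K$-algebra with no non-trivial idempotents, and any such algebra is local. This yields the desired Krull-Schmidt decomposition. There is no real obstacle here: the argument is routine once one checks the (trivial) point that degree-zero idempotents respect the $G$-grading, which is the only place where the graded setting plays a role beyond the ungraded one.
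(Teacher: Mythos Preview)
Your argument is correct and complete. The paper itself states this proposition without proof, treating it as a standard fact about graded module categories over finite-dimensional algebras; your write-up supplies exactly the routine verification one would expect, and there is nothing to compare against.
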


We have the following description of morphism spaces.

\begin{lem}\label{mor_decomp}
Let $X$ and $Y$ be $G$-graded $A$-modules. Then  
\[
\Hom_A(X,Y)= \bigoplus_{i \in G} \Hom_A(X,Y(i))_0.
\]
\end{lem}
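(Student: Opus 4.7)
The plan is to give the standard decomposition of a (not-necessarily-graded) homomorphism between graded modules into its homogeneous parts. Because $A$ is finite-dimensional and modules are finitely generated, each of $X,Y$ is supported in only finitely many degrees in $G$, so finiteness issues will be automatic.

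First I would define a candidate map
\[
\Phi: \bigoplus_{i \in G} \Hom_A(X,Y(i))_0 \longrightarrow \Hom_A(X,Y), \qquad (f_i)_i \mapsto \sum_{i \in G} f_i,
\]
which uses only the identifications $Y(i)=Y$ as ungraded modules. Since each $f_i$ is a well-defined element of $\Hom_A(X,Y)$ and the sum has finitely many nonzero terms, $\Phi$ is well-defined and $K$-linear.

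For surjectivity, given $f \in \Hom_A(X,Y)$, I would construct its $i$-th homogeneous component as follows. For each $j \in G$ and $x \in X_j$, the element $f(x) \in Y$ has a unique decomposition $f(x)=\sum_{k \in G} f(x)_k$ with $f(x)_k \in Y_k$. Define $f_i : X \to Y$ by declaring $f_i(x) := f(x)_{j+i}$ for $x \in X_j$ and extending $K$-linearly. Then $f_i(X_j) \subset Y_{j+i} = Y(i)_j$, so $f_i$ is a graded map $X \to Y(i)$ once we verify $A$-linearity: for $a \in A_k$ and $x \in X_j$, we have $ax \in X_{j+k}$ and $a \cdot Y_{j+i} \subset Y_{j+k+i}$, so $f_i(ax) = (af(x))_{j+k+i} = a\cdot f(x)_{j+i} = a f_i(x)$. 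Since $X$ is supported in finitely many degrees and $f(x)$ has only finitely many nonzero homogeneous components, only finitely many $f_i$ are nonzero, and by construction $f = \sum_i f_i$.

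For injectivity, suppose $\sum_i f_i = 0$ with $f_i \in \Hom_A(X,Y(i))_0$. Evaluating at any homogeneous $x \in X_j$ gives $\sum_i f_i(x) = 0$ in $Y$, but the terms $f_i(x) \in Y_{j+i}$ lie in pairwise distinct graded components of $Y$; hence each $f_i(x) = 0$, and since homogeneous elements span $X$ we get $f_i = 0$ for all $i$. The only real step is the routine check of $A$-linearity of the components, which is forced by the grading on $A$; I do not anticipate any genuine obstacle.
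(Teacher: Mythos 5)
Your proof is correct. The paper does not supply a proof of this lemma at all — it is stated as a standard fact in the theory of group-graded algebras (compare \cite{GG1,GG2}) — and your argument is exactly the usual one: define the $i$-th homogeneous component $f_i$ of an $A$-linear map $f$ by $f_i(x) = f(x)_{j+i}$ for $x\in X_j$, check that the grading on $A$ forces each $f_i$ to be $A$-linear, use the finite-dimensionality of $X$ (hence finite support in $G$) to see only finitely many $f_i$ are nonzero, and deduce injectivity of the decomposition from the directness of the grading on $Y$. All the sign and index conventions match the paper's convention $Y(i)_j = Y_{j+i}$, and the verification that $f_i(X_j) \subset Y(i)_j$ and that $f_i(ax)=af_i(x)$ are carried out correctly. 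No gaps.
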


We describe projective objects and injective objects in $\mathrm{mod}^{G}A$ under a suitable assumption for $A$.

\begin{prop}\label{cs}
Assume that $J_A= J_{A_0} \oplus \left( \bigoplus_{i \in G \backslash \{0\}}A_i \right)$.
We take a set $\overline{\mathrm{PI}}$ of idempotents of $A_0$ such that $\{ eA_0 \ | \ e \in \overline{\mathrm{PI}} \}$ is a complete list of indecomposable projective $A_0$-modules. 
Then the following assertions hold.
\begin{enumerate}
\def\labelenumi{(\theenumi)} 
\item Any complete set of orthogonal primitive idempotents of $A_0$ is that of $A$.
\item A complete list of simple objects in $\mathrm{mod}^{G}A$ is given by
\[
\{ S(i) \ | \ i \in G, \mbox{$S$ is a simple $A_0$-module} \}.
\]
\item A complete list of indecomposable projective objects in $\mathrm{mod}^{G}A$ is given by
\[
\{ eA(i) \ | \ i \in G,\  e \in \overline{\mathrm{PI}} \}.
\]
\item A complete list of indecomposable injective objects in $\mathrm{mod}^{G}A$ is given by
\[
\{ D(Ae)(i) \ | \ i \in G, \ e \in \overline{\mathrm{PI}} \}.
\]
\end{enumerate}
\end{prop}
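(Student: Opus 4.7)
The plan is to reduce each part to a statement about $A_0$ using the hypothesis $J_A = J_{A_0} \oplus \bigoplus_{i \neq 0} A_i$, which yields a $K$-algebra isomorphism $A/J_A \cong A_0/J_{A_0}$ and, crucially, forces $M \cdot A_i \subseteq M \cdot J_A = 0$ for any semisimple graded $A$-module $M$ and any nonzero $i \in G$.

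For (1), orthogonality and completeness of an idempotent system in $A_0$ pass to $A$ automatically, so only primitivity requires work. I would verify it by showing $eAe$ is local: $eJ_A e$ is its Jacobson radical and the quotient $eAe/eJ_A e$ identifies with $e(A_0/J_{A_0})e \cong K$ because $e$ is primitive in $A_0$ and $K$ is algebraically closed. For (2), given a graded simple $M$, the key reduction is that $M \cdot J_A = 0$ together with the hypothesis implies each homogeneous component $M_d$ is a graded $A$-submodule; hence $M$ is concentrated in one degree $d$ and $M_d$ is a simple $A_0$-module, whereas conversely each simple $A_0$-module $S$, regarded as graded with support $\{0\}$, is graded simple over $A$, and its grade shifts $S(i)$ then exhaust the list.

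For (3), I would begin with the decomposition $A = \bigoplus_{e} eA$ in $\mod^{G}A$ coming from a complete set $\{e\}$ of primitive orthogonal idempotents of $A_0$; by (1) these $eA$ are indecomposable graded projective. The Krull-Schmidt property of $\mod^{G}A$ (Proposition \ref{graded_KS}) then forces every indecomposable graded projective to be isomorphic to some $eA(i)$, and distinctness of the listed objects is read off from their tops via (2). Statement (4) follows by applying the $K$-duality $D: \mod^{G}A \to \mod^{G}A^{\op}$, which interchanges projectives and injectives, using that $A^{\op}$ satisfies the same radical hypothesis, that (3) applied to $A^{\op}$ produces $Ae(i)$ as a complete list of indecomposable graded projectives, and the identity $D(X(i)) \cong (DX)(-i)$.

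The step I expect to require the most care is the argument in (2) that a graded simple is concentrated in a single degree, since this is the only point at which the radical hypothesis plays an essential, nonformal role; everything else is extracted from idempotent lifting, Krull-Schmidt decomposition, and $K$-duality, all applied to the module category over $A_0$.
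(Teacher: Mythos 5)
Your proof is correct. The paper gives no proof of Proposition~\ref{cs} at all --- it only states the result after referring the reader to Gordon--Green \cite{GG1,GG2} for the $G=\mathbb{Z}$ case --- so there is nothing to compare against; your route (reduce to $A_0$ via the radical hypothesis, transfer primitivity through $J(eAe)=eJ_Ae$, use Krull--Schmidt for the projectives, and dualize for the injectives) is exactly the expected one.

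Two small points you elide and should spell out if this were written in full. First, the opening claim that $M\cdot J_A=0$ for a graded simple $M$ is not immediate: you need that $J_A$ is a graded ideal (which the hypothesis supplies) and nilpotent (being the radical of a finite-dimensional algebra), so that $MJ_A$ is a proper graded submodule of $M$, hence zero. Second, in the converse direction of~(2), making a simple $A_0$-module $S$ concentrated in degree~$0$ into a graded $A$-module with $A_i$ acting as $0$ for $i\neq 0$ is not purely formal when $G$ is not positively ordered, since $A_iA_{-i}\subseteq A_0$ could a priori act nontrivially; one must note that $A_iA_{-i}\subseteq J_A\cap A_0=J_{A_0}$ by the hypothesis, which annihilates $S$. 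The cleanest phrasing is that $A/J_A\cong A_0/J_{A_0}$ is a graded quotient of $A$ concentrated in degree~$0$, and $S$ is pulled back along it.
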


Finally we recall that the tensor algebra of two $G$-graded algebras admits a natural $G$-grading.

\begin{df}\label{tesor_grading}
For two $G$-graded algebras $A$ and $B$, we define a $G$-grading on the tensor algebra $A \otimes_KB$ by
\[
(A\otimes_KB)_i := \left\{ \sum a \otimes b \ | \ a \in A_k, \   b \in A_{\ell}, \ k+\ell = i \right\}
\]
for any $i \in G$.

Moreover let $C$ be a $G$-graded algebra, $X$ be a $G$-graded $A^{\op} \otimes_KB$-module and $Y$ be a $G$-graded $B^{\op}\otimes_KC$-module.
We define a $G$-grading on the $A^{\op}\otimes_KC$-module $X \otimes_{B} Y$ by
\[
(X \otimes_{B} Y)_i := \left\{ \sum x \otimes y \ | \ x \in X_k, \   y \in Y_{\ell}, \ k+\ell = i  \right\}
\]
for any $i \in G$.
\end{df}

\subsection{Positively graded algebras and $\mathbb{Z}/a\mathbb{Z}$-graded algebras}
In this subsection, we recall basic facts about positively graded algebras. 
Let $A$ be a positively graded algebra and $a$ a positive integer.

First we regard $A$ as a $\mathbb{Z}/a\mathbb{Z}$-graded algebra by
\[
A_{\overline{i}} := \bigoplus_{j \in i + a\mathbb{Z}} A_j.
\]
for any $i \in \mathbb{Z}$. Throughout this paper,  we often regard a $\mathbb{Z}$-graded algebra as a $\mathbb{Z}/a\mathbb{Z}$-graded algebra in this way.

Also we regard $X \in \mod^{\mathbb{Z}}A$ as a $\mathbb{Z}/a\mathbb{Z}$-graded $A$-module by
\[
X_{\overline{i}} := \bigoplus_{j \in i+a\mathbb{Z}} X_j.
\]
In this way we have an additive functor 
\[
F_a: \mathrm{mod}^{\mathbb{Z}}A \longrightarrow \mathrm{mod}^{\mathbb{Z}/a\mathbb{Z}}A.
\]
In the case that $a=1$, then $\mod^{\mathbb{Z}/a\mathbb{Z}}A$ coincides with $\mod A$, and $F_a$ is just the forgetful functor from $\mod^{\mathbb{Z}}A$ to $\mod A$.

Since $A$ is positively graded and by the above definition, $A$ with both gradings satisfy the assumption of Proposition \ref{cs}.

\begin{prop}\label{pos_rad}
For a positively graded algebra $A$, the equation $J_A= J_{A_0} \oplus \left( \bigoplus_{i \neq 0}A_i \right)= J_{A_{\overline{0}}} \oplus \left( \bigoplus_{\overline{i} \neq \overline{0}}A_{\overline{i}} \right)$ holds. 
In particular the assertions in Proposition \ref{cs} hold.
\end{prop}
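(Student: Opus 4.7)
The plan is to establish the first equality directly from positivity of the grading together with finite-dimensionality, and then deduce the $\mathbb{Z}/a\mathbb{Z}$-graded equality by applying the first to the subalgebra $A_{\overline{0}}$.

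For the first equality, I would set $\mathfrak{m} := \bigoplus_{i \geq 1} A_i$. Because the $\mathbb{Z}$-grading is positive, $\mathfrak{m}$ is a two-sided ideal of $A$: the product of any two elements, at least one of which has strictly positive degree, lands again in strictly positive degree. Since $A$ is finite-dimensional we have $A_i = 0$ for all $i$ beyond some $n$, so $\mathfrak{m}^{n+1} = 0$ and $\mathfrak{m}$ is nilpotent. By standard facts on nilpotent ideals, $\mathfrak{m} \subseteq J_A$ and the quotient map identifies $J_A / \mathfrak{m}$ with $J(A/\mathfrak{m})$. The surjection $A \twoheadrightarrow A_0$ induces $A/\mathfrak{m} \cong A_0$, so $J(A/\mathfrak{m}) = J_{A_0}$. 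Pulling back, $J_A$ is exactly the preimage of $J_{A_0}$ under $A \twoheadrightarrow A_0$, which as a $K$-subspace is precisely $J_{A_0} \oplus \mathfrak{m}$.

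For the second equality, I would observe that $A_{\overline{0}} = \bigoplus_{k \geq 0} A_{ka}$ is itself a $K$-subalgebra of $A$ (since $\overline{0} + \overline{0} = \overline{0}$ and $1 \in A_0 \subseteq A_{\overline{0}}$), finite-dimensional, and positively $\mathbb{Z}$-graded with $(A_{\overline{0}})_k := A_{ka}$ and degree $0$ part $A_0$. Applying the first equality (already proved) to $A_{\overline{0}}$ gives
\[
J_{A_{\overline{0}}} = J_{A_0} \oplus \bigoplus_{k \geq 1} A_{ka}.
\]
Substituting this into $J_{A_{\overline{0}}} \oplus \bigoplus_{\overline{i} \neq \overline{0}} A_{\overline{i}}$ and regrouping (every integer $i \geq 1$ is either a positive multiple of $a$ or has nonzero residue mod $a$), one recovers $J_{A_0} \oplus \bigoplus_{i \geq 1} A_i$, which equals $J_A$ by the first equality. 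The ``in particular'' clause is then immediate: both the $\mathbb{Z}$- and $\mathbb{Z}/a\mathbb{Z}$-gradings of $A$ satisfy the hypothesis of Proposition \ref{cs}, so all four of its assertions apply to each.

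The only nontrivial point is the reduction of the $\mathbb{Z}/a\mathbb{Z}$-graded statement to the $\mathbb{Z}$-graded one via $A_{\overline{0}}$; everything else is standard Jacobson-radical bookkeeping. I therefore do not expect a substantial obstacle, and would anticipate the author's proof to be at most a few lines, perhaps even leaving the reduction as a parenthetical remark.
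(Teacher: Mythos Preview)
Your argument is correct. In the paper this proposition is stated without proof, as a standard fact; your write-up supplies exactly the details one would expect, and your reduction of the $\mathbb{Z}/a\mathbb{Z}$-graded equality to the $\mathbb{Z}$-graded one via the positively graded subalgebra $A_{\overline{0}}$ is the natural way to handle it (note that $\bigoplus_{\overline{i}\neq\overline{0}} A_{\overline{i}}$ is not itself a two-sided ideal, so the nilpotent-ideal argument cannot be applied to it directly, making your reduction genuinely necessary).
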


We have the following description of morphism spaces in $\mod^{\mathbb{Z}/a\mathbb{Z}}A$ (cf. Lemma \ref{mor_decomp}).

\begin{prop}\label{a}
Let $X$ and $Y$ be $\mathbb{Z}$-graded $A$-modules. Then
\[
\Hom_A(F_a(X),F_a(Y))_{\overline{0}} = \bigoplus_{i \in \mathbb{Z}} \Hom_A(X,Y(ia))_0.
\]
\end{prop}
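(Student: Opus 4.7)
The plan is to deduce the equality directly from Lemma \ref{mor_decomp} applied to the $\mathbb{Z}$-grading, combined with the definition of the functor $F_a$ and the induced $\mathbb{Z}/a\mathbb{Z}$-grading. Since $F_a(X)$ and $F_a(Y)$ agree with $X$ and $Y$ as underlying $A$-modules, both sides of the proposed equality sit inside the common vector space $\Hom_A(X,Y)$, and we only need to identify them.

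The first step is to recall from Lemma \ref{mor_decomp} that any $f \in \Hom_A(X,Y)$ admits a unique finite decomposition $f = \sum_{n \in \mathbb{Z}} f_n$ with $f_n \in \Hom_A(X,Y(n))_0$, that is, $f_n(X_j) \subset Y_{j+n}$ for all $j \in \mathbb{Z}$ (finiteness comes from $X$ and $Y$ being finite dimensional, which is automatic since $A$ is finite dimensional). The second step is to rewrite the membership condition for the $\bar{0}$-part: by definition of the $\mathbb{Z}/a\mathbb{Z}$-grading on $F_a(X)$ and $F_a(Y)$, one has $f \in \Hom_A(F_a(X),F_a(Y))_{\bar 0}$ if and only if $f(X_j) \subset Y_{\bar j} = \bigoplus_{k \in j + a\mathbb{Z}} Y_k$ for every $j \in \mathbb{Z}$.

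The third and decisive step is to combine these two observations. For fixed $j$, the vectors $f_n(X_j)$ lie in the pairwise distinct homogeneous components $Y_{j+n}$, so by directness of the decomposition $Y = \bigoplus_k Y_k$, the inclusion $f(X_j) \subset \bigoplus_{k \in j + a\mathbb{Z}} Y_k$ holds for all $j$ if and only if $f_n = 0$ whenever $n \not\equiv 0 \pmod{a}$. Reindexing the surviving components as $n = ia$ gives exactly $f = \sum_{i \in \mathbb{Z}} f_{ia}$ with $f_{ia} \in \Hom_A(X, Y(ia))_0$, establishing the desired equality.

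There is no real obstacle: the argument is a matter of unpacking the definitions and exploiting the compatibility between the $\mathbb{Z}$-grading and its quotient $\mathbb{Z}/a\mathbb{Z}$-grading. The only point requiring mild care is to ensure that each summand $\Hom_A(X, Y(ia))_0$ does embed into $\Hom_A(F_a(X), F_a(Y))_{\bar 0}$, which is immediate from the identity $F_a(Y(ia)) = F_a(Y)$ in $\mod^{\mathbb{Z}/a\mathbb{Z}}A$.
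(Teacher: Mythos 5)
Your argument is correct; the paper states Proposition \ref{a} without proof, treating it as a routine consequence of Lemma \ref{mor_decomp} and the definition of $F_a$, and your unpacking of the definitions (decomposing $f=\sum_n f_n$ with $f_n(X_j)\subset Y_{j+n}$, then observing that $f(X_j)\subset\bigoplus_{k\in j+a\mathbb{Z}}Y_k$ forces $f_n=0$ for $n\notin a\mathbb{Z}$) is exactly the intended argument.
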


\subsection{Stable categories of self-injective algebras}
In this subsection, we give basic properties of stable categories of self-injective algebras. 

Let $G$ be an abelian group and $A$ a $G$-graded algebra.
We denote by $\underline{\mod}^GA$ the stable category of $\mod^GA$.
The following observation is clear.

\begin{lem}\label{stable_shift}
For any $i \in G$, the autofunctor $(i): \mod^{G}A \rightarrow \mod^{G}A$ induces an autofunctor $(i) : \underline{\mod}^{G}A \longrightarrow \underline{\mod}^{G}A$.
\end{lem}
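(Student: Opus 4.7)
The plan is to show that the autoequivalence $(i)$ of $\mod^{G}A$ descends to the quotient by the ideal of morphisms factoring through projectives. Concretely, I need two ingredients: that $(i)$ is an autoequivalence of $\mod^{G}A$ (with quasi-inverse $(-i)$), which is immediate from the definition, and that $(i)$ preserves the class of projective objects. Once both are in hand, the induced functor on the stable category is automatic.

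First I would verify that $(i)$ preserves projectives. Since $(i): \mod^{G}A \to \mod^{G}A$ is an equivalence of abelian categories, it preserves exactness of $\Hom$-functors, so it sends projective objects to projective objects. Alternatively, one can argue directly: any $P \in \proj^{G}A$ is a summand of some $A^{n}(j)$, and $A^{n}(j)(i) = A^{n}(j+i)$ is again projective, so $P(i) \in \proj^{G}A$.

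Next I would check that $(i)$ preserves the ideal $\mathcal{P}$ of morphisms factoring through projectives. If $f: X \to Y$ factors as $X \xrightarrow{g} P \xrightarrow{h} Y$ with $P$ projective, then $f(i) = h(i) \circ g(i)$ factors through $P(i)$, which is projective by the previous step. Hence $(i)$ sends $\mathcal{P}(X,Y)$ into $\mathcal{P}(X(i), Y(i))$, so it induces a well-defined $K$-linear functor $\underline{(i)}: \underline{\mod}^{G}A \to \underline{\mod}^{G}A$ on the stable category.

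Finally, applying the same reasoning to $(-i)$ yields an induced functor $\underline{(-i)}$, and the natural isomorphisms $(i) \circ (-i) \cong \mathrm{id}$ and $(-i) \circ (i) \cong \mathrm{id}$ in $\mod^{G}A$ descend to the stable quotient, showing that $\underline{(i)}$ is an autofunctor. There is no real obstacle here; the only point to be careful about is confirming that $(i)$ preserves projectives, which is the single condition needed for any additive endofunctor of a Frobenius category to descend to the stable category.
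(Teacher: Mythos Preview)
Your argument is correct. The paper does not give a proof at all: it simply states ``The following observation is clear'' and moves on, so your write-up is exactly the routine verification the author is leaving to the reader.
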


For the stable category, the following analogue of Proposition \ref{mor_decomp} holds.

\begin{prop}\label{stablemor_decomp}
Let $X$ and $Y$ be $G$-graded $A$-modules. Then
\[
\underline{\Hom}_A(X,Y)= \bigoplus_{i \in G} \underline{\Hom}_A(X,Y(i))_0.
\]
\end{prop}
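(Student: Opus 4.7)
The plan is to reduce the stable-category decomposition to the ungraded analogue Lemma \ref{mor_decomp} by checking that the subspace of morphisms factoring through projectives is $G$-homogeneous with respect to the grading on $\Hom_A(X,Y)$. Write $\mathcal{P}(X,Y) \subseteq \Hom_A(X,Y)$ for the subspace of $A$-linear maps factoring through some projective $A$-module, and for each $i \in G$ write $\mathcal{P}(X,Y(i))_0 \subseteq \Hom_A(X,Y(i))_0$ for the subspace of degree-$0$ morphisms in $\mod^G A$ factoring through a graded projective. Using the identification provided by Lemma \ref{mor_decomp}, the proposition reduces to showing
\[
\mathcal{P}(X,Y) = \bigoplus_{i \in G} \mathcal{P}(X,Y(i))_0
\]
as subspaces of $\Hom_A(X,Y)$; the stable decomposition then follows by passing to quotients.

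The inclusion $\supseteq$ is immediate, since any degree-$0$ graded map $X \to Y(i)$ factoring through a graded projective forgets to an $A$-linear map factoring through that same module viewed as an ungraded projective, and its image sits inside the degree-$i$ summand of $\Hom_A(X,Y)$.

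For the converse, suppose $f \in \mathcal{P}(X,Y)$ admits a factorization $X \xrightarrow{g} P \xrightarrow{h} Y$ with $P$ a projective $A$-module. By Proposition \ref{cs}(3), $P$ admits a graded structure making it a graded projective object of $\mod^G A$. Applying Lemma \ref{mor_decomp} to $\Hom_A(X,P)$ and $\Hom_A(P,Y)$, decompose $g = \sum_j g_j$ and $h = \sum_k h_k$ into graded components, with $g_j : X \to P(j)$ and $h_k : P \to Y(k)$ degree-$0$ morphisms in $\mod^G A$. The degree-$i$ component of $f$ is
\[
f_i = \sum_{j+k=i} h_k \circ g_j,
\]
and each summand admits a factorization $X \xrightarrow{g_j} P(j) \xrightarrow{h_k} Y(i)$ in $\mod^G A$ through the graded projective $P(j)$, invoking Proposition \ref{cs}(3) again to see that the shift $(j)$ preserves graded projectivity. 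Hence $f_i \in \mathcal{P}(X,Y(i))_0$, completing the inclusion.

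The only genuine subtlety is the bookkeeping of grading shifts $P(j)$ in the last step; otherwise the argument is a formal consequence of Lemma \ref{mor_decomp} applied to both the source and the target of the factorization, together with the shift-invariance of graded projectivity furnished by Proposition \ref{cs}(3). I do not foresee any obstacle beyond this.
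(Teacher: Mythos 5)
Your overall strategy is sound: decompose $\Hom_A(X,Y)$ via Lemma \ref{mor_decomp}, show the subspace $\mathcal{P}(X,Y)$ of maps factoring through projectives is graded with $i$-th component $\mathcal{P}(X,Y(i))_0$, and pass to quotients. The containment $\supseteq$ is fine. However, the step where you endow an arbitrary ungraded projective $P$ with a $G$-grading by citing Proposition \ref{cs}(3) is not justified at the level of generality in which Proposition \ref{stablemor_decomp} is stated. Proposition \ref{cs} carries the standing hypothesis $J_A = J_{A_0} \oplus \bigoplus_{i \neq 0} A_i$ (automatic in the paper's positively graded setting via Proposition \ref{pos_rad}, but not assumed here), and even then part (3) only lists the indecomposable projectives of $\mod^G A$; the gradability of an arbitrary projective $A$-module additionally uses part (1). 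So as written there is a gap in the converse inclusion.

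The gap is easy to close, and the repair also simplifies the argument: replace the arbitrary $P$ by a fixed graded projective $P_Y$ with a degree-$0$ epimorphism $\pi : P_Y \twoheadrightarrow Y$ in $\mod^G A$ (e.g.\ a graded projective cover). By the usual lifting argument, $f : X \to Y$ factors through \emph{some} projective if and only if it factors through $\pi$, so $\mathcal{P}(X,Y) = \IM\bigl(\pi_* : \Hom_A(X,P_Y) \to \Hom_A(X,Y)\bigr)$. Since $\pi$ has degree $0$, the map $\pi_*$ is compatible with the decompositions of Lemma \ref{mor_decomp}, carrying $\Hom_A(X,P_Y(i))_0$ into $\Hom_A(X,Y(i))_0$; hence $\mathcal{P}(X,Y)$ is graded with $i$-th graded piece exactly $\mathcal{P}(X,Y(i))_0$, and the statement follows. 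This version needs no hypothesis beyond $A$ being $G$-graded and avoids the issue of grading a given projective. (Your remark that the shift $(j)$ preserves graded projectivity is correct, but it is a consequence of $(j)$ being an autoequivalence of $\mod^G A$ rather than of Proposition \ref{cs}(3).)
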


In the following, we consider the stable categories of self-injective algebras.
First we recall the definition of self-injective algebras.

\begin{df}\label{df_sym}
An algebra $A$ is called \emph{self-injective} if the following equivalent conditions are satisfied. 
\begin{enumerate}
\def\labelenumi{(\theenumi)} 
\item $A$ is an injective $A$-module.
\item $A$ is an injective $A^{\op}$-module.
\end{enumerate}
Furthermore an algebra $A$ is called \emph{symmetric} if $A$ is isomorphic to $DA$ as a $A^{\op} \otimes_K A$-module.
\end{df}

For a $G$-graded algebra $A$, the stable category $\underline{\mathrm{mod}}^{G}A$ has the \emph{graded syzygy functor} 
\[
\Omega: \underline{\mathrm{mod}}^{G}A \longrightarrow \underline{\mathrm{mod}}^{G}A.
\] 
For $X \in \underline{\mathrm{mod}}^{G}A$, the object $\Omega(X) \in \underline{\mathrm{mod}}^{G}A$ is defined as the kernel of a fixed projective cover of $X$ in $\mathrm{mod}^{G}A$. 
If $A$ is self-injective, this functor is an equivalence, and a quasi-inverse of it is given by the \emph{cosyzygy functor} $\Omega^{-1}: \underline{\mathrm{mod}}^{G}A \rightarrow \underline{\mathrm{mod}}^{G}A$ which is defined as dual of the syzygy functor (cf. \cite[IV 3]{ARS}).

We have the following result by Proposition \ref{pos_rad}, and Happel's general result \cite{Ha1}.

\begin{lem}\cite[Theorem 2.6]{Ha1} \label{Gra-Fro} 
If $A$ is a $G$-graded self-injective algebra, the following assertions hold.
\begin{enumerate}
\def\labelenumi{(\theenumi)} 
\item $\mathrm{mod}^{G}A$ is a Frobenius category.
\item $\underline{\mathrm{mod}}^{G}A$ has a structure of a triangulated category whose shift functor $[1]$ is given by the graded cosyzygy functor $\Omega^{-1}$.
\end{enumerate}
\end{lem}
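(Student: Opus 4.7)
The statement splits into two parts. For (1), the plan is to verify the axioms of a Frobenius category for $\mathrm{mod}^{G}A$: that it is abelian (hence trivially extension closed in itself), that it has enough relative-projectives and relative-injectives, and that these two classes coincide. For (2), once (1) is established, I would invoke Happel's general theorem that the stable category of any Frobenius category carries a canonical triangulated structure whose shift functor is the inverse of the syzygy, identifying this inverse with the graded cosyzygy $\Omega^{-1}$ introduced just before the lemma.

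For (1), I would first observe that $\mathrm{mod}^{G}A$ is abelian because the kernel and cokernel in $\mathrm{mod} A$ of a degree-zero morphism between $G$-graded modules inherit a canonical $G$-grading, making $\mathrm{mod}^{G}A$ closed under kernels, cokernels and short exact sequences inside $\mathrm{mod} A$. Next I would describe the projectives and injectives: any graded direct summand of a shifted free module $A(i)^{n}$ is projective in $\mathrm{mod}^{G}A$, and dually any summand of a shift of $DA$ is injective. Existence of enough projectives follows from a standard graded projective-cover construction—take a projective cover of the graded semisimple quotient $X/J_{A}X$ in $\mathrm{mod}^{G}A$ and lift—and dually for enough injectives using the duality $D$ (which is exact and swaps projectives and injectives between $\mathrm{mod}^{G}A$ and $\mathrm{mod}^{G}A^{\mathrm{op}}$). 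The crucial point where self-injectivity enters is the coincidence of the two classes: self-injectivity says $A_{A}$ is injective as an ungraded module, and I would show that this transfers to the graded setting, giving that every $eA(i)$ is also injective in $\mathrm{mod}^{G}A$; a symmetric argument applied to $A^{\mathrm{op}}$ shows every injective is projective.

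For (2), once $\mathrm{mod}^{G}A$ is known to be a Frobenius category, Happel's theorem applies verbatim: the stable category is triangulated with triangles induced by short exact sequences, and the shift is the inverse syzygy, which by construction is the graded cosyzygy $\Omega^{-1}$. The main obstacle I anticipate is the transfer of injectivity from the ungraded algebra to the graded category—i.e.\ showing that $eA(i)$, which is injective in $\mathrm{mod} A$, remains injective in $\mathrm{mod}^{G}A$. The cleanest route is to argue that the forgetful functor $\mathrm{mod}^{G}A \to \mathrm{mod} A$ has an exact left adjoint (induction along the grading), so it preserves injectives; combined with the ungraded self-injectivity of $A$ and the fact that graded shifts are autoequivalences, this yields the desired property and completes the argument.
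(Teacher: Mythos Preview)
The paper does not actually prove this lemma; it is stated with a citation to Happel \cite[Theorem 2.6]{Ha1} and the preceding sentence simply says the result follows from Proposition \ref{pos_rad} and Happel's general result. So your outline is already more detailed than what the paper provides, and its overall shape---verify that $\mathrm{mod}^G A$ is Frobenius, then invoke Happel---is exactly the intended route.

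There is, however, one slip in your final paragraph. You argue that the forgetful functor $U:\mathrm{mod}^G A\to\mathrm{mod}\,A$ has an exact left adjoint and hence preserves injectives. That implication is correct but points the wrong way: it tells you that graded injectives become ungraded injectives, whereas what you need is that the graded projective $eA(i)$ is graded injective, knowing only that $U(eA(i))=eA$ is ungraded injective. Two clean fixes are available. One is to use the \emph{right} adjoint $R$ of $U$ on big module categories: since $U$ is exact, $R$ preserves injectives, and $eA(i)$ is a direct summand of $R(eA)$. The other, more in the spirit of the paper, is to use Lemma \ref{mor_decomp} (extended to $\mathrm{Ext}$ via a graded projective resolution) to get
\[
\mathrm{Ext}^1_A(X,eA)\;\cong\;\bigoplus_{g\in G}\mathrm{Ext}^1_{\mathrm{mod}^G A}(X,eA(g)),
\]
so that vanishing of the left-hand side (ungraded self-injectivity) forces each graded summand to vanish, giving injectivity of every $eA(i)$ in $\mathrm{mod}^G A$. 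With this correction your argument goes through.
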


We fix a positively graded self-injective algebra $A$ and a positive integer $a$. 
Then we have triangulated categories $\underline{\mod}^{\mathbb{Z}}A$ and $\underline{\mod}^{\mathbb{Z}/a\mathbb{Z}}A$. 
The additive functor $F_a:\mathrm{mod}^{\mathbb{Z}}A \rightarrow \mathrm{mod}^{\mathbb{Z}/a\mathbb{Z}}A$ induces an additive functor
\[
\underline{F_a}:\underline{\mathrm{mod}}^{\mathbb{Z}}A \longrightarrow \underline{\mathrm{mod}}^{\mathbb{Z}/a\mathbb{Z}}A.
\]
Since $F_a$ is an exact functor and the diagram
\[
\xymatrix{
\underline{\mathrm{mod}}^{\mathbb{Z}}A \ar[d]_{\underline{F_a}} \ar[r]^{\Omega^{-1}} & \underline{\mathrm{mod}}^{\mathbb{Z}}A \ar[d]^{\underline{F_a}} \\
\underline{\mathrm{mod}}^{\mathbb{Z}/a\mathbb{Z}}A \ar[r]_{\Omega^{-1}} & \underline{\mathrm{mod}}^{\mathbb{Z}/a\mathbb{Z}}A
}
\]
commutes, we have the following immediately.

\begin{lem}\label{lem_tri_fun}
 $\underline{F_a}$ is a triangle-functor.
\end{lem}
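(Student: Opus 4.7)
The plan is to verify the two ingredients making $\underline{F_a}$ a triangle-functor: an isomorphism $\underline{F_a}\circ \Omega^{-1} \xrightarrow{\sim} \Omega^{-1}\circ \underline{F_a}$ (already asserted to exist by the commutative diagram in the statement) and the property that distinguished triangles are sent to distinguished triangles. Both come from two elementary facts about $F_a$: it is exact, and it carries projective objects to projective objects.

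First I would check exactness and preservation of projectives. The functor $F_a$ only regrades: the underlying $A$-module of $F_a(X)$ is $X$ itself with a coarser grading, so a sequence in $\mathrm{mod}^{\mathbb{Z}}A$ is exact if and only if its image under $F_a$ is exact. For projectives, Propositions \ref{cs} and \ref{pos_rad} describe the indecomposable projectives in $\mathrm{mod}^{\mathbb{Z}}A$ as $eA(i)$ with $e\in\overline{\mathrm{PI}}$ and $i\in\mathbb{Z}$, and these are sent by $F_a$ to the modules $eA(\overline{i})$ which, by the same propositions applied to the $\mathbb{Z}/a\mathbb{Z}$-grading, are again projective (and, since $A$ is self-injective, also injective) in $\mathrm{mod}^{\mathbb{Z}/a\mathbb{Z}}A$.

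Next I would build the natural isomorphism $\eta: \underline{F_a}\circ \Omega^{-1} \xrightarrow{\sim} \Omega^{-1}\circ \underline{F_a}$ explicitly. Given $X\in \mathrm{mod}^{\mathbb{Z}}A$, fix an injective envelope $0\to X\to I\to \Omega^{-1}X\to 0$ in $\mathrm{mod}^{\mathbb{Z}}A$. Applying the exact functor $F_a$ gives a short exact sequence $0\to F_a(X)\to F_a(I)\to F_a(\Omega^{-1}X)\to 0$ in $\mathrm{mod}^{\mathbb{Z}/a\mathbb{Z}}A$ whose middle term is injective; comparing this to any fixed injective envelope of $F_a(X)$ chosen to define $\Omega^{-1}F_a(X)$ yields a canonical isomorphism $F_a(\Omega^{-1}X)\cong \Omega^{-1}F_a(X)$ in the stable category, natural in $X$.

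Finally I would transport triangles. By Happel's construction of the triangulated structure on the stable category of a Frobenius category, every distinguished triangle in $\underline{\mathrm{mod}}^{\mathbb{Z}}A$ is isomorphic to one arising from a short exact sequence $0\to X\to Y\to Z\to 0$ in $\mathrm{mod}^{\mathbb{Z}}A$ together with the injective envelope of $X$, with connecting morphism $Z\to \Omega^{-1}X$ induced by the comparison map $Y\to I$. Applying $F_a$ to this entire data yields a short exact sequence with an embedding of $F_a(X)$ into the injective $F_a(I)$, and through $\eta_X$ we recover the corresponding standard triangle in $\underline{\mathrm{mod}}^{\mathbb{Z}/a\mathbb{Z}}A$. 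The only real point to verify is that $\eta$ commutes with the connecting morphisms, which is a direct diagram chase using exactness of $F_a$ and the universal property of injective envelopes; this is the sole bookkeeping step, and no genuine obstacle arises.
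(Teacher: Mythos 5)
Your proposal is correct and follows essentially the same route as the paper: the paper's proof simply notes that $F_a$ is exact and that the diagram with $\Omega^{-1}$ commutes, concluding immediately, and you have unpacked precisely those two facts (exactness, preservation of projectives/injectives, the natural isomorphism $\underline{F_a}\circ\Omega^{-1}\cong\Omega^{-1}\circ\underline{F_a}$, transport of standard triangles). This is a fully detailed version of the paper's argument rather than a different one.
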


We have the following analogue of Proposition \ref{a}.

\begin{prop}\label{Z_to_Z/aZ}
Let $X$ and $Y$ be $\mathbb{Z}$-graded $A$-modules. Then
\[
\underline{\Hom}_A(\underline{F_a}(X),\underline{F_a}(Y))_{\overline{0}} = \bigoplus_{i \in \mathbb{Z}} \underline{\Hom}_A(X,Y(ia))_0.
\]
\end{prop}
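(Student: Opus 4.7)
The plan is to deduce this statement from its non-stable analogue Proposition \ref{a} by checking that the subspace of morphisms factoring through projectives respects the direct sum decomposition there. First I would invoke Proposition \ref{a} to get
\[
\Hom_A(F_a(X),F_a(Y))_{\overline{0}} = \bigoplus_{i \in \mathbb{Z}} \Hom_A(X,Y(ia))_0,
\]
so it only remains to identify the subspace $\mathcal{P}(F_aX,F_aY)_{\overline{0}}$ of morphisms factoring through a projective in $\mod^{\mathbb{Z}/a\mathbb{Z}}A$ with $\bigoplus_i \mathcal{P}(X,Y(ia))_0$, where the summands consist of morphisms factoring through a projective in $\mod^{\mathbb{Z}}A$.

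For the inclusion ``$\supseteq$'', I would use Proposition \ref{cs}: the indecomposable projectives in $\mod^{\mathbb{Z}}A$ are the $eA(j)$ and those in $\mod^{\mathbb{Z}/a\mathbb{Z}}A$ are the $eA(\overline{j})$, so $F_a$ sends projectives to projectives, and combined with the identity $F_a(Y(ia))=F_a(Y)$ any factorization $X\to P\to Y(ia)$ in $\mod^{\mathbb{Z}}A$ yields the desired factorization after applying $F_a$.

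For the reverse inclusion ``$\subseteq$'', the key observation I would exploit is that every projective $P\in\mod^{\mathbb{Z}/a\mathbb{Z}}A$ lifts to a projective $\widetilde{P}\in\mod^{\mathbb{Z}}A$ with $F_a(\widetilde{P})=P$, since each indecomposable summand $eA(\overline{j})$ of $P$ equals $F_a(eA(j))$. Given a factorization $f=h\circ g$ through such a $P=F_a(\widetilde{P})$, I would apply Proposition \ref{a} to the pairs $(X,\widetilde{P})$ and $(\widetilde{P},Y)$ to decompose $g=\sum_i g_i$ with $g_i\colon X\to\widetilde{P}(ia)$ of degree zero and $h=\sum_j h_j$ with $h_j\colon\widetilde{P}\to Y(ja)$ of degree zero. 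A direct bookkeeping then shows that the $\Hom_A(X,Y(ka))_0$-component of $f$ equals $\sum_{i+j=k} h_j(ia)\circ g_i$, and each such summand visibly factors through the $\mathbb{Z}$-graded projective $\widetilde{P}(ia)$, so lies in $\mathcal{P}(X,Y(ka))_0$.

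The main obstacle is this last bookkeeping step: verifying that the composition in $\mod^{\mathbb{Z}/a\mathbb{Z}}A$, once broken into graded components via Proposition \ref{a}, matches up correctly with a sum of compositions through grade-shifted projectives in $\mod^{\mathbb{Z}}A$. Conceptually this is merely the assertion that $F_a$ induces a bijection between projective factorizations in the two graded settings, and once it is pinned down the proof is complete.
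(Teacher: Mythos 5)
The paper states Proposition \ref{Z_to_Z/aZ} without proof, as an immediate analogue of Proposition \ref{a}, and your argument is exactly the expected filling-in of that omission. The strategy---take the isomorphism of Proposition \ref{a} and check that the ideal of morphisms factoring through projectives is carried isomorphically to the direct sum of such ideals---is the natural one, and your two inclusions are both sound: $F_a$ preserves projectives by Propositions \ref{pos_rad} and \ref{cs}, and every indecomposable projective in $\mod^{\mathbb{Z}/a\mathbb{Z}}A$ is $F_a$ of one in $\mod^{\mathbb{Z}}A$, so the lifting-and-decomposing step in your reverse inclusion goes through; the bookkeeping identity $f_k=\sum_{i+j=k}h_j(ia)\circ g_i$ with each term factoring through $\widetilde{P}(ia)$ is correct, and the sum lies in $\mathcal{P}(X,Y(ka))_0$ because that ideal is closed under addition (factor through the direct sum of the finitely many projectives involved). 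Your proof is correct and matches the approach the paper tacitly assumes.
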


We have the following observation.

\begin{prop}\label{sma_tri}
We have  $\thick(\IM \underline{F_a})=\underline{\mathrm{mod}}^{\mathbb{Z}/a\mathbb{Z}}A$.
\end{prop}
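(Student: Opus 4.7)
The plan is to reduce the statement $\thick(\IM \underline{F_a}) = \underline{\mod}^{\mathbb{Z}/a\mathbb{Z}}A$ to two simpler assertions: (i) every simple object of $\mod^{\mathbb{Z}/a\mathbb{Z}}A$ lies in $\IM \underline{F_a}$, and (ii) the simples generate $\underline{\mod}^{\mathbb{Z}/a\mathbb{Z}}A$ as a thick subcategory.

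For (i), I would invoke Proposition \ref{pos_rad}, which guarantees that $A$, viewed as a $\mathbb{Z}/a\mathbb{Z}$-graded algebra, satisfies the hypothesis of Proposition \ref{cs}. The latter then gives the complete list of simples of $\mod^{\mathbb{Z}/a\mathbb{Z}}A$ as $\{S(\overline{i}) \mid \overline{i} \in \mathbb{Z}/a\mathbb{Z},\ S \text{ a simple } A_0\text{-module}\}$. Each such $S$ is naturally a $\mathbb{Z}$-graded $A$-module concentrated in degree $0$ (with $A_j$ acting trivially for $j > 0$, which is consistent since $A_{>0} \subset J_A$). A direct comparison of gradings using Definition \ref{tesor_grading} and the definition of $F_a$ shows $F_a(S(i)) = S(\overline{i})$, since $F_a$ intertwines the shift functors in the sense that $F_a(X(i)) = F_a(X)(\overline{i})$ for any $X \in \mod^{\mathbb{Z}}A$. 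Hence every simple of $\mod^{\mathbb{Z}/a\mathbb{Z}}A$ lies in the essential image of $F_a$, and therefore lies in $\IM \underline{F_a}$ after passage to the stable category.

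For (ii), I would argue by induction on composition length in $\mod^{\mathbb{Z}/a\mathbb{Z}}A$. Any nonzero $X$ fits into a short exact sequence $0 \to Y \to X \to S \to 0$ with $S$ simple and $Y$ of strictly smaller length; by Lemma \ref{Gra-Fro}(1) the category $\mod^{\mathbb{Z}/a\mathbb{Z}}A$ is Frobenius and its stable category is triangulated, so this exact sequence induces a triangle in $\underline{\mod}^{\mathbb{Z}/a\mathbb{Z}}A$. Consequently $X \in \thick_{\underline{\mod}^{\mathbb{Z}/a\mathbb{Z}}A}(Y, S)$, and the induction closes. Combining (i) and (ii) yields $\underline{\mod}^{\mathbb{Z}/a\mathbb{Z}}A = \thick(\text{simples}) \subseteq \thick(\IM \underline{F_a})$, and the reverse inclusion is trivial.

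There is no genuine obstacle here; the argument is essentially a bookkeeping exercise combining the explicit classification of simples in Proposition \ref{cs} with the standard fact that composition factors generate the stable category under the triangulated structure of Lemma \ref{Gra-Fro}. The only mild point that needs care is the compatibility $F_a(S(i)) = S(\overline{i})$ together with identifying simple $A_0$-modules with simple $A_{\overline{0}}$-modules, both of which follow at once from $A_{\overline{0}}/J_{A_{\overline{0}}} = A_0/J_{A_0}$ (a consequence of Proposition \ref{pos_rad}).
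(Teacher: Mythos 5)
Your proof is correct and follows the same strategy as the paper's: use Proposition \ref{pos_rad} (via Proposition \ref{cs}) to see that $\IM F_a$ contains all simple objects of $\mod^{\mathbb{Z}/a\mathbb{Z}}A$, then pass short exact sequences in $\mod^{\mathbb{Z}/a\mathbb{Z}}A$ to triangles in $\underline{\mod}^{\mathbb{Z}/a\mathbb{Z}}A$ and induct on composition length. You simply spell out the details (the compatibility $F_a(S(i)) = S(\overline i)$, the induction on length) that the paper leaves implicit.
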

\begin{proof}
Any object in $\mathrm{mod}^{\mathbb{Z}/a\mathbb{Z}}A$ has a finite filtration by simple objects in $\mathrm{mod}^{\mathbb{Z}/a\mathbb{Z}}A$.
By Proposition \ref{pos_rad}, $\IM F_a$ contains all simple objects in $\mathrm{mod}^{\mathbb{Z}/a\mathbb{Z}}A$. 
Since short exact sequences in $\mathrm{mod}^{\mathbb{Z}/a\mathbb{Z}}A$ give rise to triangles in $\underline{\mathrm{mod}}^{\mathbb{Z}/a\mathbb{Z}}A$, $\IM \underline{F_a}$ generates $\underline{\mathrm{mod}}^{\mathbb{Z}/a\mathbb{Z}}A$ as a triangulated category.
\end{proof}

We end this section by the following result which follows from the Auslander-Reiten duality.

\begin{prop}\cite[Propositon 1.2]{AR} \label{gr_Serre}
The stable category $\underline{\mod}^{\mathbb{Z}}A$ has a Serre functor.
\end{prop}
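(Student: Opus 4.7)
The plan is to exhibit a Serre functor on $\underline{\mod}^{\mathbb{Z}}A$ explicitly as $S=\nu\circ[-1]$, where $\nu$ denotes the graded Nakayama functor, by upgrading the classical Auslander--Reiten duality to the graded setting.

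First I would construct the graded Nakayama functor. Endow $DA$ with the $\mathbb{Z}$-graded $A$-$A$-bimodule structure $(DA)_i=D(A_{-i})$ coming from the duality $D:\mod^{\mathbb{Z}}A\to\mod^{\mathbb{Z}}A^{\op}$, compatible with Definition \ref{tesor_grading}, and set $\nu:=-\otimes_A DA:\mod^{\mathbb{Z}}A\to\mod^{\mathbb{Z}}A$. Since $A$ is graded self-injective, Proposition \ref{cs} identifies indecomposable graded projectives and injectives and shows in particular that $DA$ is graded projective as both a left and a right $A$-module. Thus $\nu$ is an exact equivalence preserving $\proj^{\mathbb{Z}}A$, and it descends to an autoequivalence of $\underline{\mod}^{\mathbb{Z}}A$.

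Next I would transport the classical Auslander--Reiten duality \cite[Proposition 1.2]{AR}
\[
D\underline{\Hom}_A(X,Y)\cong\underline{\Hom}_A(Y,\nu\Omega X),
\]
natural in $X,Y\in\mod A$, into the graded setting. For $X,Y\in\mod^{\mathbb{Z}}A$, Proposition \ref{stablemor_decomp} equips both sides with natural $\mathbb{Z}$-gradings; with the bimodule grading on $DA$ fixed above, the AR isomorphism is homogeneous of degree zero. Passing to the degree-$0$ component, and using $[1]=\Omega^{-1}$ on $\underline{\mod}^{\mathbb{Z}}A$ from Lemma \ref{Gra-Fro}, yields
\[
D\,\underline{\Hom}_A^{\mathbb{Z}}(X,Y)_0\cong\underline{\Hom}_A^{\mathbb{Z}}(Y,(\nu\circ[-1])(X))_0
\]
naturally in $X,Y$, which is exactly the defining property of $S=\nu\circ[-1]$ being a Serre functor. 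The Hom-finiteness needed for the dualisation to be meaningful is inherited from Proposition \ref{graded_KS}.

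The main obstacle is the homogeneity check in the transport step: one must verify that the classical AR isomorphism respects the $\mathbb{Z}$-gradings with respect to the bimodule grading fixed on $DA$. This reduces to observing that every ingredient of the construction---the minimal graded projective resolution used to define $\Omega$, the dualising bimodule $DA$, and the $K$-dual itself---is built equivariantly from graded data, so the assembled natural isomorphism is automatically a morphism of graded vector spaces whose degree-$0$ part is the required Serre duality pairing.
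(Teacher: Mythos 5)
The paper states this proposition without proof, quoting it directly from \cite[Proposition 1.2]{AR}; your argument --- constructing the Serre functor as $\nu\circ[-1]$ with $\nu=-\otimes_A DA$ the graded Nakayama functor, and verifying Serre duality by passing to the degree-zero part of the graded Auslander--Reiten duality --- is exactly what that cited result establishes, so you are following the expected route. Your outline is correct; two small points worth noting are that the graded projectivity of $DA$ on both sides follows from graded self-injectivity (Definition~\ref{df_sym}) rather than from Proposition~\ref{cs}, which only lists indecomposable projectives and injectives without asserting they coincide, and that a fully watertight version of your ``homogeneity check'' amounts to re-deriving the Auslander--Reiten formula entirely inside $\mod^{\mathbb{Z}}A$ (rather than transporting the ungraded isomorphism), which is precisely what \cite{AR} does.
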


\section{Realizing sable categories as derived categories}
\label{cal_section}
The aim of this section is to show Theorem \ref{main_thm}. 
First we give a proof of Theorem \ref{main_thm} (1) $\Leftrightarrow$ (2) $\Leftrightarrow$ (3). 
In the proof of (1) $\Rightarrow$ (2), we construct a tilting object $T$.
Next we study properties of the endomorphism algebra of the tilting object $T$. 
Finally we prove Theorem \ref{main_thm} (1) $\Leftrightarrow$ (4) as an application.
In the proof of (1) $\Rightarrow$ (4), we construct a triangle-equivalence in (4) in two different ways.

\subsection{Existence of tilting objects in stable categories}
\label{existence_tilt_section}

In this  subsection, we give a proof of Theorem \ref{main_thm} (1) $\Leftrightarrow$ (2) $\Leftrightarrow$ (3).
Throughout this section, let $A$ be a positively graded self-injective algebra.

\begin{thm}\label{main_thm2}
Let $A$ be a positively graded self-injective algebra. The following conditions are equivalent.
\begin{enumerate}
\def\labelenumi{(\theenumi)} 
\item $A_0$ has finite global dimension.
\item $\underline{\mathrm{mod}}^{\mathbb{Z}}A$ has tilting objects.
\item There exists a triangle-equivalence $\underline{\mathrm{mod}}^{\mathbb{Z}}A \simeq \mathsf{K}^{\mathrm{b}}(\mathrm{proj}\Lambda)$ for some algebra $\Lambda$.
\end{enumerate}
\end{thm}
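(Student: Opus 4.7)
The plan is to handle $(2) \Leftrightarrow (3)$ as a direct consequence of Keller's tilting theorem (Theorem \ref{Keller}), and then to prove $(1) \Rightarrow (2)$ by explicit construction of a tilting object and $(2) \Rightarrow (1)$ by extracting finiteness of $\gldim A_0$ from the existence of such an object. By Lemma \ref{Gra-Fro}, $\underline{\mod}^{\mathbb{Z}}A$ is algebraic, and by Proposition \ref{graded_KS} together with the fact that Krull--Schmidt descends from $\mod^{\mathbb{Z}}A$ to its stable category, it is Krull--Schmidt. Thus Theorem \ref{Keller} applied to any tilting object $T$ immediately produces the equivalence $\underline{\mod}^{\mathbb{Z}}A \simeq \mathsf{K}^{\mathrm{b}}(\proj \End_{\underline{\mod}^{\mathbb{Z}}A}(T))$ in $(3)$, and conversely $\Lambda$ is tilting in $\mathsf{K}^{\mathrm{b}}(\proj\Lambda)$ by Example \ref{ex_tilt}, so its image under any equivalence in $(3)$ is a tilting object in $\underline{\mod}^{\mathbb{Z}}A$; this gives $(2) \Leftrightarrow (3)$.

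For $(1) \Rightarrow (2)$, I would build a tilting object $T$ explicitly from $A_0 = A/A_{\geq 1}$ viewed as a graded $A$-module concentrated in degree $0$, together with suitable grade-shifts and syzygies. The hypothesis $\gldim A_0 < \infty$ will enter twice: first, it ensures that every $A_0$-module admits a finite projective resolution, which lifts to a finite resolution of graded $A$-modules, and this finiteness will drive the rigidity check $\underline{\Hom}(T, T[j]) = 0$ for $j \neq 0$ after decomposing the stable Hom into its graded pieces via Proposition \ref{stablemor_decomp}; second, it allows a finite inductive procedure showing that every simple object of $\mod^{\mathbb{Z}}A$ lies in $\thick T$, whence generation $\thick T = \underline{\mod}^{\mathbb{Z}}A$ follows because every graded module has a finite composition series and short exact sequences give triangles in the stable category.

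For $(2) \Rightarrow (1)$, once $(2) \Leftrightarrow (3)$ is in hand, the existence of a tilting object yields $\underline{\mod}^{\mathbb{Z}}A \simeq \mathsf{K}^{\mathrm{b}}(\proj\Lambda)$ for some $\Lambda$. In this model, every pair of objects has only finitely many nonzero Homs to shifts. Transporting this uniform boundedness back through the embedding $\mod A_0 \hookrightarrow \mod^{\mathbb{Z}}A$ obtained by placing an $A_0$-module in degree $0$ via $A \twoheadrightarrow A_0$, and through the induced functor to the stable category, will force uniform vanishing of sufficiently high $\Ext^j_{A_0}(M, N)$ and hence $\gldim A_0 < \infty$.

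The main obstacle will be $(1) \Rightarrow (2)$: identifying the correct explicit tilting object and simultaneously verifying rigidity and generation is the substantive content of the theorem, and this is precisely where the positivity of the grading and the graded self-injectivity of $A$ must be combined carefully with the finite global dimension of $A_0$.
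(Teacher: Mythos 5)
Your overall strategy matches the paper's: $(2)\Leftrightarrow(3)$ is Keller's theorem, $(1)\Rightarrow(2)$ is the substantive direction proved by constructing an explicit tilting object, and $(2)\Rightarrow(1)$ is read off from eventual $\Hom$-vanishing against the simple module $A_0/J_{A_0}$ placed in degree $0$, via $\underline{\Hom}_A(A_0/J_{A_0}, A_0/J_{A_0}[i])_0 = \Ext^i_{A_0}(A_0/J_{A_0}, A_0/J_{A_0})$. (The paper does this last step more directly by invoking Lemma \ref{hom_lem}, which says that a tilting object already forces eventual $\Hom$-vanishing; the detour through $(3)$ is unnecessary.)

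Two points in your sketch of $(1)\Rightarrow(2)$ need correcting. First, the tilting object is $T=\bigoplus_{i\ge 0}A(i)_{\le 0}$, i.e.\ degree-wise truncations of grade-shifts of $A$, not grade-shifts of $A_0$ together with syzygies; and rigidity $\underline{\Hom}_A(T,T[i])_0=0$ for $i\neq 0$ holds \emph{unconditionally}, without $\gldim A_0<\infty$---the point is that $T$ is concentrated in degrees $\le 0$ while each $\Omega^j T$ for $j>0$ is concentrated in degrees $>0$, so there are no nonzero degree-$0$ morphisms in either direction. The finiteness of $\gldim A_0$ is used only in the generation step $\thick T=\underline{\mod}^{\mathbb{Z}}A$. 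Second, your "finite inductive procedure" handles $A_0(i)$ for $i\ge 0$ easily, but the negative shifts $A_0(-i)$ require a separate dual argument: using that $D(A_0)$ has finite projective dimension over $A_0^{\op}$, one produces an exact sequence $0\to A_0\to D(Q^0)\to\cdots\to D(Q^{n-1})\to D(X)\to 0$ with all $D(Q^j)$ graded projective and $D(X)$ supported in degrees $<0$, so that $A_0(-i)\cong D(X)(-i)[-n]$ in $\underline{\mod}^{\mathbb{Z}}A$ lies in $\thick T$ by induction. Spelling out this two-sided induction is precisely where the self-injectivity of $A$ interacts with $\gldim A_0 <\infty$, and is the nontrivial content you correctly flagged as the main obstacle.
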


In the following, we give a proof of this result.
We have Theorem \ref {main_thm2} (2) $\Leftrightarrow$ (3) by Theorem \ref{Keller}.
We prove Theorem \ref{main_thm2} (2) $\Rightarrow$ (1). 
Our strategy of proof is similar to \cite[Example 2.5 (2)]{AI}.
We need the following easy lemma which gives a property of triangulated categories having tilting objects.

\begin{lem} \cite[Proposition 2.4]{AI} \label{hom_lem}
Let $\mathcal{T}$ be a triangulated category. 
If $\mathcal{T}$ has a tilting object, then $\Hom_{\mathcal{T}}(X,Y[i])=0$ holds for any $X,Y \in \mathcal{T}$ and $|i| >> 0$.
\end{lem}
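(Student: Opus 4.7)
The plan is to reduce the statement to the defining properties of a tilting object by a standard thick-subcategory argument, applied twice: first varying the second argument, then the first.

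First I would fix a tilting object $T \in \mathcal{T}$ and define the full subcategory
\[
\mathcal{U} := \{ Y \in \mathcal{T} \ | \ \Hom_{\mathcal{T}}(T, Y[i]) = 0 \text{ for } |i| \gg 0 \}.
\]
The first step is to check that $\mathcal{U}$ is a thick subcategory of $\mathcal{T}$. Closure under shifts $[\pm 1]$ is immediate, since shifting $Y$ just reindexes the vanishing range. Closure under direct summands is clear from the additivity of $\Hom_{\mathcal{T}}(T,-)$. Closure under triangles uses the long exact sequence: for a triangle $Y' \to Y \to Y'' \to Y'[1]$ with $Y', Y'' \in \mathcal{U}$, pick a common threshold $N$ beyond which $\Hom_{\mathcal{T}}(T, Y'[i])$ and $\Hom_{\mathcal{T}}(T, Y''[i])$ both vanish; then the long exact sequence
\[
\Hom_{\mathcal{T}}(T, Y'[i]) \longrightarrow \Hom_{\mathcal{T}}(T, Y[i]) \longrightarrow \Hom_{\mathcal{T}}(T, Y''[i])
\]
forces $\Hom_{\mathcal{T}}(T, Y[i]) = 0$ for $|i| \geq N$, so $Y \in \mathcal{U}$.

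Next, the key input from the tilting hypothesis is that $T \in \mathcal{U}$: by Definition \ref{df_tilting} (1), $\Hom_{\mathcal{T}}(T, T[i]) = 0$ for all $i \neq 0$, in particular for $|i| \gg 0$. Combined with $\mathcal{U}$ being thick and the second defining property $\thick_{\mathcal{T}} T = \mathcal{T}$, this yields $\mathcal{U} = \mathcal{T}$. Thus for every $Y \in \mathcal{T}$, we have $\Hom_{\mathcal{T}}(T, Y[i]) = 0$ for $|i| \gg 0$.

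Finally, I would run the same thick-subcategory argument in the first variable. For a fixed $Y \in \mathcal{T}$, set
\[
\mathcal{V}_Y := \{ X \in \mathcal{T} \ | \ \Hom_{\mathcal{T}}(X, Y[i]) = 0 \text{ for } |i| \gg 0 \}.
\]
By exactly the same reasoning (using the contravariant long exact sequence of $\Hom_{\mathcal{T}}(-, Y[i])$ applied to a triangle in the first variable), $\mathcal{V}_Y$ is thick. Moreover $T \in \mathcal{V}_Y$ by the conclusion of the previous step. Hence $\mathcal{V}_Y = \thick_{\mathcal{T}} T = \mathcal{T}$, proving the lemma. I do not anticipate a real obstacle here; the only point to be careful about is that when passing a triangle through $\mathcal{U}$ or $\mathcal{V}_Y$, the vanishing thresholds of the two end terms must be combined into a single threshold for the middle term, which is handled by taking their maximum.
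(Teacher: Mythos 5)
Your proof is correct, and it is the standard thick-subcategory argument run twice, once in each variable. Note that the paper itself does not prove this lemma but simply cites it from \cite[Proposition 2.4]{AI}; your argument is essentially the one given there, and the only mild subtlety (combining the two vanishing thresholds of the outer terms of a triangle into one for the middle term, and rotating to cover all three positions) is handled correctly.
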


\begin{proof}[Proof of Theorem \ref{main_thm2} (2) $\Rightarrow$ (1)]
We assume that $\gldim A_0 = \infty$ and $\underline{\mathrm{mod}}^{\mathbb{Z}}A$ has a tilting object $T$. 
First we note that for any $A_0$-module $X$, the degree $0$ part of a projective resolution of $X$ in $\mod^{\mathbb{Z}}A$ gives a projective resolution of $X$ in $\mod A_0$ by Proposition \ref{pos_rad}. 
So we have $\Ext^i_A(A_0/J_{A_0},A_0/J_{A_0})_0=\Ext^i_{A_0}(A_0/J_{A_0},A_0/J_{A_0})$ for any $i \geq 0$.

Next since $\gldim A_0 = \infty$, the $A_0$-module $A_0/J_0$ has infinite projective dimension.
Thus we have $\Ext^i_{A_0}(A_0/J_{A_0},A_0/J_{A_0}) \neq 0$ for any $i \geq 0$.

Consequently we have
\[
\underline{\Hom}_A(A_0/J_{A_0},A_0/J_{A_0}[i])_0=\Ext^i_A(A_0/J_{A_0},A_0/J_{A_0})_0=\Ext^i_{A_0}(A_0/J_{A_0},A_0/J_{A_0}) \neq 0
\]
for any $i \geq 0$. This contradicts to Lemma \ref{hom_lem}.
\end{proof}

In the rest of this subsection, we prove Theorem \ref{main_thm2} (1) $\Rightarrow$ (2) by constructing a tilting object in $\underline{\mathrm{mod}}^{\mathbb{Z}}A$.
We need truncation functors 
\[
(-)_{\geq i}: \mathrm{mod}^{\mathbb{Z}} A \rightarrow \mathrm{mod}^{\mathbb{Z}} A, \ \ \ 
(-)_{\leq i}: \mathrm{mod}^{\mathbb{Z}} A \rightarrow \mathrm{mod}^{\mathbb{Z}} A
\]
which are defined as follows. 
For a $\mathbb{Z}$-graded $A$-module $X$, $X_{\geq i}$ is a $\mathbb{Z}$-graded sub $A$-module of $X$ defined by 
\[
(X_{\geq i})_j := \begin{cases}
0 & (j < i) \\
X_j & (j \geq i),
\end{cases}
\]
and $X_{\leq i}$ is a $\mathbb{Z}$-graded factor $A$-module $X/X_{\geq i+1}$ of $X$.

Now we define a $\mathbb{Z}$-graded $A$-module by
\begin{eqnarray}
T:= \bigoplus_{i \geq 0} A(i)_{\leq 0}. \label{df_T}
\end{eqnarray}
Since $A(i)_{\leq 0}=A(i)$ is zero in $\underline{\mod}^{\mathbb{Z}}A$ for sufficiently large $i$, we can regard $T$ as an object in $\underline{\mathrm{mod}}^{\mathbb{Z}}A$.
Let 
\begin{eqnarray}
\Gamma:=\underline{\End}_A(T)_0. \label{df_Gamma}
\end{eqnarray}
Now we have the following result, where the second assertion implies Theorem \ref{main_thm2} (1) $\Rightarrow$ (2).

\begin{thm}\label{main_thm1}
Under the above setting, the following assertions hold.
\begin{enumerate}
\def\labelenumi{(\theenumi)} 
\item $T$ is a tilting object in the subcategory $\thick T$ of $\underline{\mod}^{\mathbb{Z}}A$. 
\item If $A_0$ has finite global dimension, then $T$ is a tilting object in $\underline{\mathrm{mod}}^{\mathbb{Z}}A$. 
\end{enumerate}
\end{thm}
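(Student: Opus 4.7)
The plan is to verify the two axioms of a tilting object (Definition \ref{df_tilting}) for $T$. Since the generation axiom $\thick_{\thick T}(T) = \thick T$ is tautological, part (1) reduces to proving $\underline{\Hom}_A(T, T[n])_0 = 0$ for all $n \neq 0$; part (2) adds, on top of (1), the claim that $\thick T = \underline{\mod}^{\mathbb{Z}}A$.

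For (1), the computation rests on two structural short exact sequences in $\mod^{\mathbb{Z}}A$. The first,
\[
0 \to A(i)_{\geq 1} \to A(i) \to T_i \to 0,
\]
gives, using that $A(i)$ is projective-injective in the Frobenius category $\mod^{\mathbb{Z}}A$, the identification $T_i \simeq A(i)_{\geq 1}[1]$ in $\underline{\mod}^{\mathbb{Z}}A$. The second, for $j \geq 1$,
\[
0 \to A_j \to T_j \to T_{j-1}(1) \to 0,
\]
arises by observing that the degree-$0$ component $A_j \subset T_j$ (viewed as an $A_0$-module with $A_{\geq 1}$ acting trivially) is a graded $A$-submodule, with quotient exactly $T_{j-1}(1)$. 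Applying $\underline{\Hom}_A(-, T_j[n])_0$ and $\underline{\Hom}_A(T_i, -)_0$ to these sequences produces long exact sequences of stable Homs; iterating dimension-shifts and using that $\underline{\Hom}_A(A(i),-)_0 = 0$, one computes $\underline{\Hom}_A(T_i, T_j[n])_0$ inductively on $i,j$, splitting by the sign of $n$. The positivity of the grading is used throughout: it forces many of the relevant graded Hom-spaces to vanish for degree-counting reasons. I expect this inductive Ext-computation to be the main technical obstacle.

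For (2), granted (1), it suffices to place the graded simples $S(k)$ (which generate $\underline{\mod}^{\mathbb{Z}}A$ as a triangulated category by Proposition \ref{pos_rad}) into $\thick T$. Since $T_0 = A_0 \in T$ (concentrated in degree $0$), the hypothesis $\gldim A_0 < \infty$ implies that every $A_0$-module $M$ viewed as a graded $A$-module concentrated in degree $0$ lies in $\thick T$, via a finite $A_0$-projective resolution whose terms are summands of $A_0 = T_0$; in particular, each simple $S$ and each $A_j$ (in degree $0$) lies in $\thick T$. The rotated second SES gives $T_{j-1}(1) \in \thick\{A_j, T_j\} \subset \thick T$, and iterating with shifted versions of the SES yields $T_i(k), A_j(k), S(k) \in \thick T$ for all $i, j \geq 0$ and $k \geq 0$. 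For $k < 0$ (simples in strictly positive degrees), the leverage comes from the fact that for $j$ strictly above the top degree $\ell$ of $A$, $T_j = A(j)$ is projective and hence zero in the stable category; the shifted first SES then collapses to triangles of the form $T_{\ell-1}(1-k) \simeq A_\ell(-k)[1]$. Combining these with finite global dimension of $A_0$ and the fact that the graded pieces $A_0, A_1, \ldots, A_\ell$ collectively contain every simple $A_0$-module as a composition factor (since their sum is $A$), one extracts the remaining simples in positive degrees into $\thick T$ by an analogous inductive argument.

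The crux is the Ext-vanishing in (1): while the setup via the two short exact sequences is natural, running the induction on stable Homs requires careful graded bookkeeping through successive syzygies, and this is the heart of the argument. Part (2) is, once (1) and $\gldim A_0 < \infty$ are in hand, essentially a bookkeeping exercise, with the one subtle point being the extraction of positive-degree simples via the collapse of the first SES when $T_j$ becomes projective.
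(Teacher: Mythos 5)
Your proposal has the right overall shape—verify the two tilting axioms for $T$, handling Hom-vanishing separately from generation—but both halves of your plan miss the key tools that the paper uses, and in part (2) there is a genuine gap.

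For the Hom-vanishing in (1), you expect an inductive Ext-computation via two short exact sequences to be "the main technical obstacle," but the paper's argument is a one-observation degree count that completely bypasses any induction. Taking a graded projective resolution $\cdots \to P^1 \to A(i) \to A(i)_{\leq 0} \to 0$ of each summand $T_i = A(i)_{\leq 0}$ and using positivity of the grading, one sees that $(P^j)_{\leq 0}=0$ and hence $(\Omega^j T)_{\leq 0}=0$ for all $j>0$, while $T = T_{\leq 0}$. Since a degree-$0$ homomorphism from something supported in degrees $\leq 0$ to something supported in degrees $\geq 1$ (or vice versa) must be zero, both $\Hom_A(T,\Omega^j T)_0$ and $\Hom_A(\Omega^j T,T)_0$ vanish, and these control $\underline{\Hom}_A(T,T[\pm j])_0$. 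Your two short exact sequences (both of which are valid, and in fact the first reappears in part (2)) are consistent with this but set up an unnecessary induction; the relevant fact is simply that all higher syzygies of $T$ live in strictly positive degrees. You should identify this observation explicitly rather than defer it as a technical obstacle.

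For (2), the half where you push to nonnegative shifts $k\geq 0$ (equivalently, degrees $\leq 0$) via the second short exact sequence $0\to A_j\to T_j\to T_{j-1}(1)\to 0$ is essentially the paper's part (i) and works. The gap is in the remaining half, where you need $A_0(k)\in\thick T$ for $k<0$ (simples concentrated in positive degrees). Your proposed identity $T_{\ell-1}(1-k)\simeq A_\ell(-k)[1]$ does not reach positive degrees: for $k<0$ we have $1-k>0$, so $T_{\ell-1}(1-k)$ is one of the already-known non-positive shifts, and $A_\ell(-k)$ is concentrated in degree $k<0$, also already known—you learn nothing new. Conversely, trying to run the identity with $k>0$ makes the hypothesis $T_{\ell-1}(1-k)\in\thick T$ exactly the thing you are trying to prove. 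All of your exact sequences stay inside the range of support controlled by $T$ and its positive shifts; nothing in your toolkit raises the support into positive degrees. The paper's mechanism here is essential and not a "bookkeeping exercise": take a finite graded projective resolution of $D(A_0)$ over $A^{\op}$ (finite because $\gldim A_0 < \infty$), dualize it to obtain an injective coresolution $0\to A_0\to D(Q^0)\to\cdots\to D(Q^{n-1})\to D(X)\to 0$ in $\mod^{\mathbb{Z}}A$ with $D(X)_{\geq 0}=0$, so that $A_0(-i)=D(X)(-i)[-n]$ in $\underline{\mod}^{\mathbb{Z}}A$, and then feed $D(X)(-i)$ (supported in degrees $<i$) back into the induction. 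Without some version of this duality step, which trades a module in degree $0$ for a cosyzygy supported strictly in negative degrees, the positive-degree case is out of reach and your proof of (2) is incomplete.
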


We prove the above result by checking two conditions in Definition \ref{df_tilting}. 
First we show that the self-extensions of $T$ vanish without assuming that $A_0$ has finite global dimension. 

\begin{lem}\label{van_self-ext}
We have $\underline{\Hom}_A(T,T[i])_0=0$ for any $i \neq 0$.
\end{lem}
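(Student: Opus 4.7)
The plan is to reduce the vanishing to a summand-wise statement and then split into the cases $i \geq 1$ and $i \leq -1$, exploiting in both cases the disjointness of supports: modules of the form $A(a)_{\leq 0}$ live in degrees $\leq 0$, while modules like $A(b)_{\geq 1}$ live in degrees $\geq 1$. Since $T = \bigoplus_{a \geq 0} A(a)_{\leq 0}$, it suffices to prove $\underline{\Hom}_A(A(a)_{\leq 0}, A(b)_{\leq 0}[i])_0 = 0$ for all $a, b \geq 0$ and $i \neq 0$.

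For $i \geq 1$, the standard identification $\underline{\Hom}_A(X, Y[i]) = \Ext^i_{\mod^{\mathbb{Z}}A}(X, Y)$ (valid for any self-injective $A$) lets us work with graded $\Ext$. Let $\mathcal{C} \subset \mod^{\mathbb{Z}}A$ be the full subcategory of graded modules concentrated in degrees $\leq 0$; a degreewise check shows $\mathcal{C}$ is a Serre subcategory, so Yoneda $\Ext$-groups computed in $\mathcal{C}$ agree with those in $\mod^{\mathbb{Z}}A$ on its objects. The key claim is that each $A(a)_{\leq 0}$ is a \emph{projective} object of $\mathcal{C}$: given a surjection $E \twoheadrightarrow N$ in $\mathcal{C}$ and a morphism $f : A(a)_{\leq 0} \to N$, projectivity of $A(a)$ in $\mod^{\mathbb{Z}}A$ produces a lift $\phi : A(a) \to E$ of the composite $A(a) \twoheadrightarrow A(a)_{\leq 0} \xrightarrow{f} N$; the restriction $\phi|_{A(a)_{\geq 1}}$ is a degree-$0$ map from a module supported in degrees $\geq 1$ into $E \in \mathcal{C}$ (supported in degrees $\leq 0$), hence vanishes, and so $\phi$ factors through $A(a)_{\leq 0}$ to give the required lift. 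This projectivity immediately yields $\Ext^i_{\mathcal{C}}(A(a)_{\leq 0}, A(b)_{\leq 0})_0 = 0$ for $i \geq 1$.

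For $i \leq -1$, I use the short exact sequence $0 \to A(b)_{\geq 1} \to A(b) \to A(b)_{\leq 0} \to 0$ in $\mod^{\mathbb{Z}}A$; since $A(b)$ vanishes in $\underline{\mod}^{\mathbb{Z}}A$ (being projective), this produces the stable isomorphism $A(b)_{\leq 0} \cong A(b)_{\geq 1}[1]$, whence
\[
\underline{\Hom}_A(A(a)_{\leq 0}, A(b)_{\leq 0}[i])_0 = \underline{\Hom}_A(A(a)_{\leq 0}, A(b)_{\geq 1}[i+1])_0.
\]
Here $i+1 \leq 0$, so $A(b)_{\geq 1}[i+1]$ is represented by the iterated syzygy $\Omega^{-(i+1)}(A(b)_{\geq 1})$. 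Because $A$ is positively graded and $A(b)_{\geq 1}$ is supported in degrees $\geq 1$, its minimal projective cover has all generators in degrees $\geq 1$ and hence support in degrees $\geq 1$; iterating, every minimal syzygy $\Omega^j(A(b)_{\geq 1})$ with $j \geq 0$ remains supported in degrees $\geq 1$. Any degree-$0$ homomorphism from $A(a)_{\leq 0}$ (supported in degrees $\leq 0$) to such a module must vanish, so its stable quotient $\underline{\Hom}_A(-, -)_0$ is zero as well.

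The main technical point is the projectivity of $A(a)_{\leq 0}$ in $\mathcal{C}$; once that is in hand, the $i \geq 1$ case is an immediate consequence of $\Ext$-vanishing from a projective object inside a Serre subcategory, and the $i \leq -1$ case follows from the same ``disjoint support'' observation after rewriting via $A(b)_{\leq 0} \cong A(b)_{\geq 1}[1]$.
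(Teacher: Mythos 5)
Your $i \leq -1$ argument is correct and essentially matches the paper's computation via syzygies. The gap is in the $i \geq 1$ case: you claim that because $\mathcal{C}$ is a Serre subcategory of $\mod^{\mathbb{Z}}A$, Yoneda $\Ext$-groups in $\mathcal{C}$ agree with those in $\mod^{\mathbb{Z}}A$. That implication is false in general. It does hold in degree $1$ (Serre subcategories are extension-closed), but can fail in degree $\geq 2$. For instance, take $\Lambda$ the self-injective Nakayama algebra with two simples $S_1, S_2$ and Loewy length $2$, so that $\Omega S_1 \cong S_2$ and $\Omega S_2 \cong S_1$. Then $\add S_1$ is a Serre subcategory of $\mod\Lambda$ (it is extension-closed because $\Ext^1_\Lambda(S_1,S_1)=0$), and $S_1$ is projective in $\add S_1$, so the relative $\Ext^i(S_1,S_1)$ vanish for $i\geq 1$; yet $\Ext^2_\Lambda(S_1,S_1)\cong K\neq 0$. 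Thus ``projective in a Serre subcategory'' does not by itself force the ambient higher $\Ext$ to vanish, and the step where you pass from $\Ext^i_{\mathcal{C}}=0$ to $\Ext^i_{\mod^{\mathbb{Z}}A}=0$ is unjustified.

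In your specific situation the conclusion is nonetheless true, but for a reason you did not state: the inclusion $\mathcal{C}\hookrightarrow\mod^{\mathbb{Z}}A$ has an \emph{exact} left adjoint, namely the truncation $(-)_{\leq 0}$ (exactness is a degreewise check, just like Serre-closure). An exact left adjoint to an exact inclusion preserves projectives and carries an ambient projective resolution of $X\in\mathcal{C}$ to a projective resolution inside $\mathcal{C}$, and by adjunction the two $\Hom$-complexes agree, so the relative and ambient $\Ext$ coincide here. (In the Nakayama example above, the left adjoint exists but is not exact, which is why the counterexample is possible.) Alternatively --- and this is what the paper does --- you can avoid the subcategory $\mathcal{C}$ altogether: take the projective resolution of $T$ in $\mod^{\mathbb{Z}}A$ beginning with $\bigoplus_a A(a)\twoheadrightarrow T$; positivity of the grading forces every syzygy $\Omega^j T$ with $j>0$ to be supported in degrees $\geq 1$, so $\underline{\Hom}_A(T,T[i])_0 = \underline{\Hom}_A(\Omega^i T,T)_0 = 0$ and $\underline{\Hom}_A(T,T[-i])_0 = \underline{\Hom}_A(T,\Omega^i T)_0 = 0$ for all $i>0$, by the same disjoint-support observation you already used for $i\leq -1$. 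This is uniform, shorter, and closes the gap.
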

\begin{proof}
We take a projective resolution
\[
\cdots \rightarrow P^2 \rightarrow P^1 \rightarrow A(i) \rightarrow A(i)_{\leq 0} \rightarrow 0
\]
of $A(i)_{\leq 0}$ in $\mod^{\mathbb{Z}}A$. Since $A$ is positively graded, we have $(P^j)_{\leq 0}=0$ for $j>0$. 
Thus $(\Omega^{j}T)_{\leq 0}=0$ holds for  any $j>0$. 
Since $T=T_{\leq 0}$, we have $\Hom_{A}(T,\Omega^{j}T)_0=0=\Hom_A(\Omega^{j}T,T)_0$ for any $j>0$.
Consequently we have 
\[
\underline{\Hom}_A(T,T[i])_0=\underline{\Hom}_A(\Omega^iT,T)_0=0,
\]
\[
\underline{\Hom}_A(T,T[-i])_0 = \underline{\Hom}_A(T,\Omega^{i}T)_0=0
\]
for $i>0$.
\end{proof}

Theorem \ref{main_thm1} (1) follows from Lemma \ref{van_self-ext}.

Next we  prove the following result. 

\begin{lem}\label{gen_T}
If $A_0$ has finite global dimension, then we have $\underline{\mathrm{mod}}^{\mathbb{Z}}A=\thick T$.
\end{lem}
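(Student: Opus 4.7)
The plan is to show $\thick T = \underline{\mod}^{\mathbb{Z}}A$. Writing $\mathcal{C} = \thick T$, every object of $\mod^{\mathbb{Z}}A$ admits a finite composition series whose factors are graded simples $S(j)$ (with $S$ a simple $A_0$-module and $j \in \mathbb{Z}$, by Proposition \ref{cs}), so since $\mathcal{C}$ is thick it suffices to exhibit each such $S(j)$ in $\mathcal{C}$.

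First I would dispatch the case $j \geq 0$. The summand $T_0 = A_0$ lies in $\mathcal{C}$, and since $\gldim A_0 < \infty$, every simple $A_0$-module $S$ has a finite projective $A_0$-resolution; viewing this as an exact sequence in $\mod^{\mathbb{Z}}A$ concentrated in degree $0$ yields $S \in \thick\{A_0\} \subset \mathcal{C}$, and similarly $A_m \in \mathcal{C}$ for every $m \geq 0$. To upgrade to non-negative grade shifts, I would use the short exact sequence
\[
0 \to A_i \to T_i \to T_{i-1}(1) \to 0
\]
in $\mod^{\mathbb{Z}}A$, arising because the degree-$0$ part $A_i$ of $T_i = A(i)_{\leq 0}$ is annihilated by $A_{\geq 1}$ inside the truncation and is therefore a submodule with quotient isomorphic to $T_{i-1}(1)$. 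Since both $A_i$ and $T_i$ already belong to $\mathcal{C}$, the induced triangle in $\underline{\mod}^{\mathbb{Z}}A$ places $T_{i-1}(1) \in \mathcal{C}$ for every $i \geq 1$; consequently the grade-shift autoequivalence $(1)$ carries the generating family $\{T_k\}_{k \geq 0}$ into $\mathcal{C}$, so $(1)\mathcal{C} \subset \mathcal{C}$ and hence $(j)\mathcal{C} \subset \mathcal{C}$ for every $j \geq 0$. Combining, $S(j) \in \mathcal{C}$ for all $j \geq 0$ and all simple $S$.

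The harder case is $j < 0$, i.e., simples sitting in strictly positive degrees. The starting point is that $\Omega A_0 = A_{\geq 1}$ lies in $\mathcal{C}$ (as $\mathcal{C}$ is closed under $[-1]$ and $A_0 \in \mathcal{C}$) and is supported in positive degrees, with a natural filtration $A_{\geq 1} \supset A_{\geq 2} \supset \cdots$ whose graded factors $A_k(-k)$ are $A_k$ placed in degree $k > 0$. The strategy is a simultaneous induction using the triangles coming from
\[
0 \to A_{\geq k+1} \to A_{\geq k} \to A_k(-k) \to 0 \quad \text{and} \quad 0 \to A_{\geq k} \to A \to A_{\leq k-1} \to 0
\]
to push all of $A_k(-k), A_{\leq k-1}, A_{\geq k}$ into $\mathcal{C}$, and thence to deduce $A_0(-k) \in \mathcal{C}$; once this is achieved, shifted $A_0$-projective resolutions of any simple $S$ place $S(-k) \in \thick\{A_0(-k)\} \subset \mathcal{C}$.

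The main obstacle is precisely this descent to $A_0(-k)$. The $(1)$-closure of the previous paragraph shifts support downward rather than upward, and the $A_0$-projective resolutions run the wrong way (expressing $A_m(-k) \in \thick\{A_0(-k)\}$, not the converse), so neither device on its own produces $A_0(-k) \in \mathcal{C}$ from previously established members. I expect the needed extra ingredient is to exploit $\gldim A_0 < \infty$ via iterated graded syzygies: for $m$ exceeding $\gldim A_0$, the module $\Omega_A^m S$ of a degree-$0$ simple $S$ should have its degree-$0$ component annihilated and its graded support concentrated in strictly positive degrees, exhibiting positive-degree simples as composition factors of an object already known to lie in $\mathcal{C}$, from which the remaining $S(j)$ for $j < 0$ can be recovered by the filtration induction above.
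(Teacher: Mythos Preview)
Your treatment of the case $j \geq 0$ is correct and is a pleasant variant of the paper's argument: rather than inducting directly on $i$ via the sequence $0 \to (A(i)_{\leq 0})_{\geq 1-i} \to A(i)_{\leq 0} \to A_0(i) \to 0$, you establish the closure $(1)\mathcal{C} \subset \mathcal{C}$ once and for all from the sequences $0 \to A_i \to T_i \to T_{i-1}(1) \to 0$. Both routes are fine.

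The gap is in the case $j < 0$, and it is genuine. Your proposed ingredient --- that $\Omega_A^m S$ is supported in strictly positive degrees for $m > \gldim A_0$ --- is true, but it does not let you extract the individual simples $S(-k)$. The difficulty is that your induction runs in the wrong direction: to peel off the top composition factor of $\Omega^m_A S$ (or of $A_{\geq k}$) via the triangle coming from $0 \to A_{\geq k+1} \to A_{\geq k} \to A_k(-k) \to 0$, you would need $A_{\geq k+1} \in \mathcal{C}$, which via $0 \to A_{\geq k+1} \to A \to A_{\leq k} \to 0$ is equivalent to already knowing the simples in degree $k$. So the filtration induction you sketch is circular: syzygies push support toward \emph{higher} positive degrees, while the inductive hypothesis only supplies simples in \emph{lower} degrees. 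Your $(1)$-closure goes the same way and cannot reverse this.

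The paper closes the gap by passing to \emph{cosyzygies}, i.e.\ working on the injective side. Using that $A$ is self-injective and that $A_0^{\op}$ also has finite global dimension, one takes the first $n = \mathrm{pd}_{A_0^{\op}} D(A_0) + 1$ terms of a graded projective resolution of $D(A_0)$ in $\mod^{\mathbb{Z}}A^{\op}$; the $n$-th syzygy $X$ then satisfies $X_{\leq 0}=0$. Dualizing gives an injective (hence projective) coresolution
\[
0 \to A_0 \to DQ^0 \to \cdots \to DQ^{n-1} \to DX \to 0
\]
in $\mod^{\mathbb{Z}}A$ with $(DX)_{\geq 0}=0$, so that $A_0(-i) \cong DX(-i)[-n]$ in $\underline{\mod}^{\mathbb{Z}}A$. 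Now $DX(-i)$ is supported in degrees $\leq i-1$, so all its composition factors are $S'(j)$ with $j > -i$, which lie in $\mathcal{C}$ by the inductive hypothesis. This is exactly the ``shift of support downward'' that your syzygy argument could not achieve. If you replace your final paragraph with this cosyzygy step, your proof goes through.
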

\begin{proof}
We regard $A_0$ as the natural $\mathbb{Z}$-graded factor $A$-module of $A$, i.e. $A_0(0)=A(0)_{\leq 0}$.
Any object in $\mathrm{mod}^{\mathbb{Z}}A$ has a finite filtration by simple objects in $\mod^{\mathbb{Z}}A$ which are given by simple $A_0$-modules concentrated in some degree by Proposition \ref{pos_rad}.
Since $A_0$ has finite global dimension, it is enough to show that $A_0(i) \in \thick T$ for any $i \in \mathbb{Z}$. 
We divide the proof into two parts.

(i) We show $A_0(i) \in \thick T$ for any $i \geq 0$ by induction on $i$. 
Obviously we have $A_0(0)=A(0)_{\leq 0} \in \thick T$. 
We assume $A_0(0),\cdots,A_0(i-1) \in \thick T$. 
There exists an exact sequence
\[
0 \rightarrow (A(i)_{\leq 0})_{\geq 1-i} \rightarrow A(i)_{\leq 0} \rightarrow A_0(i) \rightarrow 0,
\]
in $\mathrm{mod}^{\mathbb{Z}}A$. 
By the inductive hypothesis, we have $(A(i)_{\leq 0})_{\geq 1-i} \in \thick T$.
Thus we have $A_0(i) \in \thick T$.

(ii) We show that $A_0(-i) \in \thick T$ for any $i \geq 1$.
We assume $A_0(-j) \in \thick T$ for any $j <i$.
We put $n:=\mathrm{pd}_{A^{\op}_0} D(A_0)+1$. Then there exists an exact sequence
\[
0 \rightarrow X \rightarrow Q^{n-1} \rightarrow \cdots \rightarrow Q^1 \rightarrow Q^0 \rightarrow D(A_0)\rightarrow 0
\]
in $\mathrm{mod}^{\mathbb{Z}}A^{\op}$ such that $Q^j$ is a $\mathbb{Z}$-graded projective $A^{\op}$-module for $0 \leq j \leq n-1$, and $X_{\leq 0} =0$.
Thus we have an exact sequence
\[
0 \rightarrow A_0 \rightarrow D(Q^0) \rightarrow D(Q^1) \rightarrow \cdots \rightarrow D(Q^{n-1}) \rightarrow D(X) \rightarrow 0
\]
in $\mathrm{mod}^{\mathbb{Z}}A$ such that $D(Q^j)$ is a $\mathbb{Z}$-graded projective $A$-modules for $0 \leq j \leq n-1$, and $(D(X))_{\geq 0}=0$. 
We have $A_0(-i) = D(X)(-i)[-n]$ in $\underline{\mod}^{\mathbb{Z}}A$.
Since $(D(X)(-i))_{\geq i}=0$, we have $D(X)(-i) \in \thick T$ by the inductive hypothesis. 
Thus we have $A_0(-i) \in \thick T$.
\end{proof}

Now we finished the proof of Theorem \ref{main_thm1}.
\hfill{$\square$}

\subsection{Calculation of the endomorphism algebra of the tilting object}
\label{calculation_section}

We keep the notations in the previous subsection.
The aim of this subsection is to calculate $\Gamma$ given in \eqref{df_Gamma}.
Although $\Gamma$ is defined as a Hom-space in $\underline{\mod}^{\mathbb{Z}}A$, we can calculate $\Gamma$ as a Hom-space in $\mod^{\mathbb{Z}}A$ by removing projective direct summands from $T$.

\begin{prop}\label{cal_end}
Take a decomposition $T=\underline{T} \oplus P$ in $\Mod^{\mathbb{Z}}A$ where $\underline{T}$ is a direct sum of all indecomposable non-projective direct summands of $T$.
Then the following assertions hold.
\begin{enumerate}
\def\labelenumi{(\theenumi)} 
\item $\underline{T}$ is finitely generated.
\item $\underline{T}$ is isomorphic to $T$ in $\underline{\mathrm{mod}}^{\mathbb{Z}}A$.
\item $\Hom_A(Q,\underline{T})_0=0$ holds for any projective object $Q$ in $\mod^{\mathbb{Z}}A$ satisfying $\Soc Q \subset Q_{\ell}$ for some $\ell \leq 0$.
\item There exists an algebra isomorphism $\Gamma \simeq \End_A(\underline{T})_0$.
\end{enumerate}
\end{prop}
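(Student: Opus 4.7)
The plan is to handle parts (1)--(4) in order, with (3) carrying the main technical content and (4) deduced from it. For (1) and (2), the key observation is that $A(i)_{\leq 0}=A(i)$ whenever $i \geq \max_e \ell_e$, where $\ell_e$ denotes the top degree of $eA$ for each primitive idempotent $e$ of $A_0$: since $A$ is positively graded the truncation removes nothing once $i$ is this large. Therefore only finitely many summands of $T = \bigoplus_{i \geq 0} A(i)_{\leq 0}$ contribute non-projective indecomposables, so $\underline{T}$ is a finite direct sum of finite-dimensional modules, proving (1). Part (2) is immediate since $P$ is projective, hence zero in $\underline{\mathrm{mod}}^{\mathbb{Z}}A$.

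For (3), I would first reduce to the indecomposable case $Q = eA(j)$; the socle condition $\Soc Q \subset Q_{\ell}$ with $\ell \leq 0$ forces $j = \ell_e - \ell \geq \ell_e$. Decomposing $A(i)_{\leq 0} = \bigoplus_{e'}(e'A)_{\leq i}(i)$ and restricting to non-projective summands (those with $i < \ell_{e'}$), a direct calculation gives $\Hom_A(eA(j),(e'A)_{\leq i}(i))_0 \cong e'A_{i-j}e$. This reduces the claim to the key lemma that, for a positively graded self-injective $A$, $e'A_k e = 0$ whenever $k < \ell_{e'} - \ell_e$. I would prove this lemma by noting that a non-zero $a \in e'A_k e$ yields a non-zero degree-$k$ map $eA \to e'A$ whose cyclic image is supported in degrees $[k, k+\ell_e]$; since $e'A$ is indecomposable injective with essential simple socle in degree $\ell_{e'}$, this image must contain that socle, forcing $\ell_{e'} \leq k+\ell_e$. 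In our situation $j \geq \ell_e$ and $i < \ell_{e'}$ yield $i-j < \ell_{e'} - \ell_e$, giving the vanishing. This step, together with the book-keeping of degree shifts inside $(eA)_{\leq i}(i)$, is the hardest part of the argument.

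For (4), I would decompose $\End_A(T)_0$ along $T = \underline{T} \oplus P$: the blocks involving $P$ consist entirely of maps factoring through projectives, so the only remaining point is to rule out non-zero maps $f : \underline{T} \to \underline{T}$ of degree $0$ that factor through a projective. Given such a factorization $f = v \circ u$ through a projective $R$, I would replace $R$ by the injective hull $I(u(\underline{T}))$, which is a direct summand of $R$ since $R$ is also injective, and restrict $u$ and $v$ accordingly. The socle of $I(u(\underline{T}))$ equals $\Soc(u(\underline{T})) \subset u(\underline{T})$, which lies in the degree support of $\underline{T}$, namely in degrees $\leq 0$. Decomposing $I(u(\underline{T}))$ into indecomposable injective summands, each with simple socle in a single non-positive degree, part (3) applies to each summand and forces the restriction of $v$ to vanish, so $f = 0$. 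This yields $\Gamma \cong \End_A(\underline{T})_0$.
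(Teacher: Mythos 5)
Your proof is correct, but it takes a genuinely different route from the paper on both of the substantive parts, (3) and (4). For (3), you reduce to indecomposable summands on both sides, compute $\Hom_A(eA(j),(e'A)_{\leq i}(i))_0 \simeq e'A_{i-j}e$ explicitly, and then prove the vanishing lemma $e'A_k e = 0$ for $k < \ell_{e'}-\ell_e$ via the cyclic-image argument. The paper's route is lighter: it first notes that for indecomposable projectives $Q^1,Q^2$ with socles in degrees $\ell_1 < \ell_2$ one has $\Hom_A(Q^1,Q^2)_0=0$ (because $Q^1$ vanishes above degree $\ell_1$ while any nonzero map must hit $\Soc Q^2 \subset Q^2_{\ell_2}$), then takes a graded projective cover $Q' \twoheadrightarrow \underline{T}$, observes $\Soc Q' \subset Q'_{\geq 1}$ by construction, and lifts any degree-$0$ map $Q\to\underline{T}$ to a degree-$0$ map $Q\to Q'$ which must vanish. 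Your key lemma is essentially the same socle-degree fact in a more granular form, so the content is the same but the book-keeping is heavier. For (4), your injective-hull argument is valid but again more elaborate than necessary: the paper simply observes that any indecomposable projective $Q$ has its socle in some single degree $\ell$, and if $\ell\leq 0$ then $\Hom_A(Q,\underline{T})_0=0$ by (3), while if $\ell>0$ then $\Hom_A(\underline{T},Q)_0=0$ because $\underline{T}$ is supported in degrees $\leq 0$; either way one of the two factors of any stably-trivial endomorphism of $\underline{T}$ through $Q$ vanishes. Your construction of $I(u(\underline{T}))$ as a direct summand of $R$ and the reduction of $\Soc I$ to degrees $\leq 0$ all goes through, but the paper's case split avoids the need for injective hulls altogether. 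Net: the same underlying fact drives both proofs (socle of an indecomposable graded projective/injective sits in its top degree, and degree-$0$ maps can only increase socle degree), but the paper exploits it directly while you rederive it through explicit Hom-space computations and an extra reduction.
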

\begin{proof}
(1), (2) The assertions are obvious.

(3) Let $Q^1$ and $Q^2$ be indecomposable projective objects in $\mod^{\mathbb{Z}}A$. 
Since $Q^1$ and $Q^2$ have simple socle as $A$-modules, there exist integers $\ell_1$ and $\ell_2$ such that $\Soc Q^1 \subset Q^1_{\ell_1}$ and $\Soc Q^2 \subset Q^2_{\ell_2}$.
Then if $\ell_1<\ell_2$, we have $\Hom_A(Q^1,Q^2)_0=0$ since $Q^1_{\ell_2}=0$ and $\Soc Q^2 \subset Q^2_{\ell_2}$.

Now we show the assertion. 
Let $Q$ be a projective object in $\mod^{\mathbb{Z}}A$ satisfying $\Soc Q \subset Q_{\ell}$ for some $\ell \leq 0$.
We take a projective cover $Q' \rightarrow \underline{T}$ of $\underline{T}$ in $\mod^{\mathbb{Z}}A$. 
From our construction of $T$ and $\underline{T}$, we have $\Soc Q' \subset Q'_{\geq 1}$. 
Thus by the above observation, we have $\Hom_A(Q,Q')_0=0$. 
Hence $\Hom_A(Q,\underline{T})_0=0$ holds.

(4) Since there exist algebra isomorphisms
\[
\underline{\End}_A(T)_0 \simeq \left( \begin{array}{cc}
\underline{\End}_A(\underline{T})_0 & \underline{\Hom}_A(P,\underline{T})_0 \\
\underline{\Hom}_A(\underline{T},P)_0 & \underline{\End}_A(P)_0 
\end{array} \right) = \left( \begin{array}{cc}
\underline{\End}_A(\underline{T})_0 & 0 \\
0 & 0
\end{array} \right),
\]
it is enough to show that $\underline{\End}_A(\underline{T})_0=\End_A(\underline{T})_0$.

Let $Q$ be an indecomposable projective object in $\mod^{\mathbb{Z}}A$. 
Then there exists $\ell \in \mathbb{Z}$ such that $\Soc Q \subset Q_{\ell}$.
First if $\ell \leq 0$, then by (3), we have $\Hom_{A}(Q,\underline{T})_0=0$.
Next if $\ell > 0$, then we have $\Hom_{A}(\underline{T},Q)_0=0$ since $\underline{T}_{\ell}=0$.
Consequently there exists no non-zero morphisms from $\underline{T}$ to $\underline{T}$ factoring through projective objects in $\mathrm{mod}^{\mathbb{Z}}A$. 
Thus we have $\underline{\End}_A(\underline{T})_0=\End_A(\underline{T})_0$.
\end{proof}

\subsection{Global dimension of the endomorphism algebra of the tilting object}
\label{global_section}

We keep the notations in the previous subsections.
The aim of this subsection is to show the following result.

\begin{thm}\label{gldim_fin}
If $A_0$ has finite global dimension, then so does $\Gamma$.
\end{thm}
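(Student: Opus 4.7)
The plan is to exhibit $\Gamma$ (up to removing projective summands) as a generalized upper-triangular matrix algebra over $A_0$, and then to invoke the classical theory of global dimension for such algebras.

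First, combining Proposition \ref{cal_end} (4) with a direct computation of degree-$0$ morphisms between the graded truncations $A(i)_{\leq 0}$ in $\mod^{\mathbb{Z}}A$, one obtains a natural identification
\[
\Hom_A(A(i)_{\leq 0}, A(j)_{\leq 0})_0 \simeq A_{j-i}
\]
for $0 \leq i \leq j$, with the space vanishing for $i>j$. Indeed any degree-$0$ morphism is determined by the image of the generator of $A(i)_{\leq 0}$, which lies in degree $-i$, and the target of this image is $(A(j)_{\leq 0})_{-i} = A_{j-i}$. Hence the ``big'' endomorphism ring $\widetilde{\Gamma} := \End_A(T)_0$ has the structure of a generalized upper-triangular matrix algebra over $A_0$, with the $A_0^{\op}\otimes_K A_0$-bimodules $A_k$ ($k \geq 1$) as super-diagonal entries and multiplication induced from that of $A$.

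Next, I would use the hypothesis $\gldim A_0 < \infty$ to observe that each $A_k$, being a finitely generated $A_0$-module on either side, has finite projective dimension both as a left and as a right $A_0$-module. The classical fact that the generalized upper-triangular algebra $\begin{pmatrix} R_1 & M \\ 0 & R_2 \end{pmatrix}$ has finite global dimension whenever $\gldim R_1, \gldim R_2 < \infty$ and $M$ has finite projective dimension as an $R_2$-module (and as an $R_1^{\op}$-module), applied inductively to the block structure, then yields $\gldim \widetilde{\Gamma} < \infty$.

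Finally, I would transfer this to $\Gamma = \End_A(\underline{T})_0 \simeq e\widetilde{\Gamma}e$, where $e$ is the idempotent cutting out the non-projective indecomposable summands of $T$. Since corner rings do not preserve finite global dimension in general, this last step is the main obstacle. The key input is Proposition \ref{cal_end} (3), whose vanishing of $\Hom_A(Q,\underline{T})_0$ for projectives $Q$ with socle in non-positive degree should ensure that the ``projective block'' of $\widetilde{\Gamma}$ decouples homologically from the remaining corner, so that $\Gamma$ inherits finite global dimension from $\widetilde{\Gamma}$. Making this decoupling rigorous is the principal technical difficulty.
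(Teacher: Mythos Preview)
Your approach is the paper's. Two remarks complete it.

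First, replace $T$ by the finite sum $U := \bigoplus_{i=0}^{\ell-1} A(i)_{\leq 0}$ (with $\ell$ chosen so that $A = A_{\leq \ell}$), so that $\End_A(U)_0$ is a genuine finite-dimensional upper-triangular matrix algebra over $A_0$; this is Lemma~\ref{nonstab_end}, and the forward direction of Lemma~\ref{lem_ARS} (iterated) gives $\gldim \End_A(U)_0 < \infty$. One has $U = \underline{T} \oplus P'$ with $P'$ projective.

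Second, the ``decoupling'' you flag as the principal difficulty is not an obstacle once you notice that Proposition~\ref{cal_end}~(3) does more than cut out an idempotent corner: it gives $\Hom_A(P',\underline{T})_0 = 0$, so
\[
\End_A(U)_0 \simeq \begin{pmatrix} \End_A(P')_0 & \Hom_A(\underline{T},P')_0 \\ 0 & \Gamma \end{pmatrix}
\]
is itself a triangular matrix algebra with $\Gamma$ as one of the diagonal blocks. Now Lemma~\ref{lem_ARS} applies in the \emph{reverse} direction --- finite global dimension of the triangular algebra forces finite global dimension of each diagonal block --- and yields $\gldim \Gamma < \infty$. The step you worried would require new ideas is thus a second application of the same lemma you already invoked.
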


In the following, we prove Theorem \ref{gldim_fin}.
We need the following observations.

\begin{lem}\cite[III Proposition 2.7]{ARS}\label{lem_ARS}
Let $\Lambda$ and $\Gamma$ be algebras and $M$ a $\Lambda^{\op}\otimes_K\Gamma$-module. 
Then the algebra
\[
\left(\begin{array}{cc}
\Lambda & M \\
0 & \Gamma
\end{array}\right)
\]
has finite global dimension if and only if so do $\Lambda$ and $\Gamma$.
\end{lem}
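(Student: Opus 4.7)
First I would describe the category of right $T$-modules concretely, writing $T$ for the matrix algebra in the statement and $e_1, e_2$ for its diagonal idempotents. Every right $T$-module $N$ decomposes into $X := N e_1 \in \mod \Lambda$ and $Y := N e_2 \in \mod \Gamma$, with the remaining structure given by the action of $e_1 T e_2 = M$, which determines a $\Gamma$-linear map $\phi \colon X \otimes_\Lambda M \to Y$. Conversely, every such triple $(X, Y, \phi)$ produces a right $T$-module, and the indecomposable projective $T$-modules are precisely $(P, P \otimes_\Lambda M, \mathrm{id})$ for $P$ an indecomposable projective $\Lambda$-module and $(0, Q, 0)$ for $Q$ an indecomposable projective $\Gamma$-module.

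The implication $\gldim T < \infty \Rightarrow \gldim \Lambda, \gldim \Gamma < \infty$ is then immediate. The functors $(-) e_1 \colon \mod T \to \mod \Lambda$ and $(-) e_2 \colon \mod T \to \mod \Gamma$ are exact and, by inspection of the classification above, carry $T$-projectives to $\Lambda$- and $\Gamma$-projectives respectively. Thus any $X \in \mod \Lambda$, viewed as the $T$-module $(X, 0, 0)$, admits a $T$-projective resolution of length at most $\gldim T$ whose image under $(-) e_1$ is a $\Lambda$-projective resolution of $X$; hence $\gldim \Lambda \leq \gldim T$, and the argument for $\Gamma$ is symmetric.

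For the converse, set $a := \gldim \Lambda$ and $b := \gldim \Gamma$. For any $T$-module $(X, Y, \phi)$ the natural short exact sequence $0 \to (0, Y, 0) \to (X, Y, \phi) \to (X, 0, 0) \to 0$ reduces the problem to bounding the $T$-projective dimensions of $(0, Y, 0)$ and $(X, 0, 0)$ separately. The first is at most $b$, since the exact functor $Y \mapsto (0, Y, 0)$ carries any $\Gamma$-projective resolution of $Y$ into a $T$-projective resolution. For the second, I would use that $X \mapsto (X, 0, 0)$ is also exact, so applying it to a finite $\Lambda$-projective resolution $0 \to P_a \to \cdots \to P_0 \to X \to 0$ reduces the problem to bounding $\mathrm{pd}_T(P, 0, 0)$ for $P \in \proj \Lambda$. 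For such $P$, the auxiliary short exact sequence $0 \to (0, P \otimes_\Lambda M, 0) \to (P, P \otimes_\Lambda M, \mathrm{id}) \to (P, 0, 0) \to 0$ has $T$-projective middle term and left term of $T$-projective dimension at most $b$ by the previous case. Combining these estimates gives $\gldim T \leq a + b + 1$.

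The only real subtlety lies in the last reduction: one cannot hope to lift a $\Lambda$-projective resolution $P_\bullet \to X$ directly to a $T$-projective resolution of $(X, X \otimes_\Lambda M, \mathrm{id})$ via the modules $(P_i, P_i \otimes_\Lambda M, \mathrm{id})$, since the second components need not form an exact sequence when $- \otimes_\Lambda M$ fails to be exact. The point is to resolve $(X, 0, 0)$ first through the \emph{exact} functor $X \mapsto (X, 0, 0)$ and only then invoke the auxiliary sequence, which is genuinely exact with $T$-projective middle term once $P$ is $\Lambda$-projective.
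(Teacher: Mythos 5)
Your description of $\mod T$ (writing $T$ for the triangular matrix algebra) as triples $(X,Y,\phi)$ with $\phi\colon X\otimes_\Lambda M\to Y$, the classification of indecomposable projectives, and the entire ``if'' direction---including the bound $\gldim T\le a+b+1$ and the subtlety you correctly flag at the end---are all fine. The paper itself gives no proof, only a citation to Auslander--Reiten--Smal\o, so there is nothing in-paper to compare against.

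There is, however, a real gap in the ``only if'' direction: the claim that $(-)e_2\colon\mod T\to\mod\Gamma$ carries $T$-projectives to $\Gamma$-projectives, and that ``the argument for $\Gamma$ is symmetric'', is false. The functor $(-)e_2$ sends the projective $(P,P\otimes_\Lambda M,\mathrm{id})$ to $P\otimes_\Lambda M$, which is $\Gamma$-projective only when $M$ happens to be projective as a right $\Gamma$-module; a triangular matrix ring is not symmetric in its two corners, and only $(-)e_1$ preserves projectives in general. So applying $(-)e_2$ to an arbitrary $T$-projective resolution of $(0,Y,0)$ need not produce a $\Gamma$-projective resolution, and your argument for $\gldim\Gamma\le\gldim T$ does not go through as written. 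The conclusion is still correct and the fix is short: since any morphism $(P,P\otimes_\Lambda M,\mathrm{id})\to(0,Y',0)$ has first component zero, the compatibility square forces its second component to vanish as well, so $\Hom_T\bigl((P,P\otimes_\Lambda M,\mathrm{id}),(0,Y',0)\bigr)=0$. Hence every term of a minimal $T$-projective resolution of $(0,Y,0)$ is of the form $(0,Q,0)$ with $Q\in\proj\Gamma$, and applying $(-)e_2$ to \emph{that} resolution does give a $\Gamma$-projective resolution of $Y$ of the same length. Equivalently, one may note directly that $\Ext^i_T\bigl((0,Y,0),(0,Y',0)\bigr)\cong\Ext^i_\Gamma(Y,Y')$, by resolving $(0,Y,0)$ by $(0,Q_\bullet,0)$ for a $\Gamma$-projective resolution $Q_\bullet\to Y$ and using $\Hom_T\bigl((0,Q,0),(0,Y',0)\bigr)=\Hom_\Gamma(Q,Y')$.
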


\begin{lem}\label{nonstab_end}
For a positive integer $\ell$, let $U:= \bigoplus^{\ell-1}_{i=0}A(i)_{\leq 0}$. 
Then the following assertions hold. 
\begin{enumerate}
\def\labelenumi{(\theenumi)} 
\item There exists an algebra isomorphism 
\[
\End_A(U)_0 \simeq \left( \begin{array}{ccccc} 
A_0 & A_1 & \cdots & A_{\ell-2} &  A_{\ell-1}  \\
 & A_0 & \cdots & A_{\ell-3} & A_{\ell-2}  \\
 &  & \ddots & \vdots & \vdots \\
 &  &  & A_0 & A_1   \\
0 &  & & &  A_0 
\end{array} \right).
\]
\item The algebra $A_0$ has finite global dimension if and only if so does $\End_A(U)_0$.
\end{enumerate}
\end{lem}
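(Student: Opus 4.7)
The plan for part (1) is to compute each component $\Hom_A(A(i)_{\leq 0}, A(j)_{\leq 0})_0$ for $0 \leq i,j \leq \ell-1$ directly. Since $A(i)_{\leq 0}$ is the cyclic graded $A$-module $A(i)/A(i)_{\geq 1}$, generated by the class of $1 \in A$ in degree $-i$, any degree-zero $A$-linear map $f : A(i)_{\leq 0} \to A(j)_{\leq 0}$ is determined by $f(1) \in A(j)_{-i} = A_{j-i}$. Positivity of the grading on $A$ forces $A_{j-i} = 0$ whenever $j < i$, while for $j \geq i$ and any $a \in A_{j-i}$, left multiplication by $a$ descends to a well-defined map because $a \cdot A_{\geq i+1} \subset A_{\geq j+1}$. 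This identifies the $(i,j)$-entry of $\End_A(U)_0$ with $A_{j-i}$ when $i \leq j$ and with $0$ otherwise, and composition of such maps corresponds to multiplication in $A$, yielding the displayed upper-triangular matrix algebra.

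For part (2), the plan is to induct on $\ell$ using Lemma \ref{lem_ARS}. Splitting off the first summand $U = A(0)_{\leq 0} \oplus U'$ with $U' := \bigoplus_{i=1}^{\ell-1} A(i)_{\leq 0}$, part (1) provides an algebra isomorphism
\[
\End_A(U)_0 \simeq \left( \begin{array}{cc} A_0 & M \\ 0 & \End_A(U')_0 \end{array} \right)
\]
for an $A_0^{\op} \otimes_K \End_A(U')_0$-bimodule $M$ read off from the first row of the matrix in (1). By Lemma \ref{lem_ARS}, the left-hand side has finite global dimension if and only if both $A_0$ and $\End_A(U')_0$ do. Since $\End_A(U')_0$ again has the matrix form from (1) with $\ell$ replaced by $\ell-1$, induction on $\ell$ closes the loop and gives the equivalence. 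The base case $\ell = 1$ is trivial, since then $\End_A(U)_0 \simeq A_0$.

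I do not see a substantive obstacle here. The Hom computation is forced by the cyclic structure of $A(i)_{\leq 0}$ and the positivity of the grading, while part (2) is a clean recursive application of Lemma \ref{lem_ARS}. The only minor care needed is to verify that the bimodule $M$ is finite-dimensional over $K$ so that Lemma \ref{lem_ARS} applies, which is automatic since each $A_k$ is finite-dimensional.
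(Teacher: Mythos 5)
Your proposal is correct and follows the same line as the paper's terse proof: part (1) is exactly the identity $\Hom_A(A(i)_{\leq 0},A(j)_{\leq 0})_0=A_{j-i}$ that the paper records, and part (2) is the iterated application of Lemma \ref{lem_ARS} that the paper leaves implicit. One minor bookkeeping remark: with the usual convention that the $(r,c)$-entry of the endomorphism matrix is $\Hom_A(U_c,U_r)_0$, the displayed upper-triangular matrix orders the summands as $A(\ell-1)_{\leq 0},\ldots,A(0)_{\leq 0}$, so the $1\times 1$ corner you peel off together with the first row is $\End_A(A(\ell-1)_{\leq 0})_0$, not $\End_A(A(0)_{\leq 0})_0$; equivalently, peeling off $A(0)_{\leq 0}$ produces a lower-triangular block form. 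This is immaterial for the argument, since Lemma \ref{lem_ARS} applies to either block form (they are opposite algebras, and finiteness of global dimension is preserved under taking opposites).
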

\begin{proof}
(1) Since $\Hom_A(A(i)_{\leq 0},A(j)_{\leq 0})_0=A_{j-i}$, we have the assertion. 

(2) The assertion follows from (1) and Lemma \ref{lem_ARS}.
\end{proof}

Now we can prove Theorem \ref{gldim_fin}.

\begin{proof}[Proof of Theorem \ref{gldim_fin}]
We take a positive integer $\ell$ such that $A=A_{\leq \ell}$.
Then $A(i)_{\leq 0}$ is a projective object in $\mod^{\mathbb{Z}}A$ for any $i \geq \ell$. 
Let $U:= \bigoplus^{\ell-1}_{i=0}A(i)_{\leq 0}$.
Then $\End_A(U)_0$ has finite global dimension by Lemma \ref{nonstab_end} (2).
We can decompose $U=\underline{T} \oplus P'$ where $\underline{T}$ is the direct summand of $T$ given in Proposition \ref{cal_end} and $P'$ is a projective direct summand of $U$.
By Proposition \ref{cal_end} (3) and (4), we have
\[
\End_A(U)_0 \simeq \left( \begin{array}{cc}
\End_A(P')_0 & \Hom_A(\underline{T},P')_0 \\
\Hom_A(P',\underline{T})_0 & \End_A(\underline{T})_0
\end{array} \right) \simeq \left( \begin{array}{cc}
\End_A(P')_0 & \Hom_A(\underline{T},P')_0 \\
0 & \Gamma
\end{array} \right).
\]
The assertion follows from Lemma \ref{lem_ARS}.
\end{proof}

\subsection{Existence of a triangle-equivalnce}
\label{tri_equ_section}

We keep the notations in the previous subsections.
In this subsection, we prove Theorem \ref{main_thm} (1) $\Leftrightarrow$ (4) by applying results in previous subsections.

\begin{thm}\label{main_thm3}
Let $A$ be a positively graded self-injective algebra.
The following conditions are equivalent. 
\begin{enumerate}
\def\labelenumi{(\theenumi)} 
\item $A_0$ has finite global dimension.
\item There exists a triangle-equivalence $\underline{\mathrm{mod}}^{\mathbb{Z}}A \simeq \mathsf{D}^{\mathrm{b}}(\mathrm{mod}\Lambda)$ for some algebra $\Lambda$.
\end{enumerate}
\end{thm}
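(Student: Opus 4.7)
The statement is the equivalence of $(1)$ ``$A_0$ has finite global dimension'' and $(4)$ ``there exists a triangle-equivalence $\underline{\mod}^{\mathbb{Z}}A \simeq \mathsf{D}^{\mathrm{b}}(\mod\Lambda)$ for some algebra $\Lambda$.'' The plan is to reduce both directions to results already in hand: Theorem \ref{main_thm2} (the $\mathsf{K}^{\mathrm{b}}(\proj\Lambda)$-version), Theorem \ref{gldim_fin} (finite global dimension of $\Gamma$), Theorem \ref{Serre=gldim} (the characterization via Serre functors / collapse of $\mathsf{K}^{\mathrm{b}}(\proj\Lambda) \hookrightarrow \mathsf{D}^{\mathrm{b}}(\mod\Lambda)$), and Proposition \ref{gr_Serre} (existence of a Serre functor on $\underline{\mod}^{\mathbb{Z}}A$).

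For $(1)\Rightarrow(4)$, I would first invoke Theorem \ref{main_thm2} (1)$\Rightarrow$(3) with the explicit tilting object $T$ constructed in \eqref{df_T}, producing a triangle-equivalence
\[
\underline{\mod}^{\mathbb{Z}}A \simeq \mathsf{K}^{\mathrm{b}}(\proj\Gamma)
\]
where $\Gamma = \underline{\End}_A(T)_0$ as in \eqref{df_Gamma}. By Theorem \ref{gldim_fin}, the assumption $\gldim A_0 < \infty$ implies $\gldim \Gamma < \infty$, so Theorem \ref{Serre=gldim} (1)$\Leftrightarrow$(2) lets me replace $\mathsf{K}^{\mathrm{b}}(\proj\Gamma)$ by $\mathsf{D}^{\mathrm{b}}(\mod\Gamma)$. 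Composing the two equivalences realizes $\underline{\mod}^{\mathbb{Z}}A$ as $\mathsf{D}^{\mathrm{b}}(\mod\Gamma)$, so one can take $\Lambda := \Gamma$.

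For $(4)\Rightarrow(1)$, the key observation is that $\underline{\mod}^{\mathbb{Z}}A$ already has a Serre functor by Proposition \ref{gr_Serre}. Transporting this through the hypothetical triangle-equivalence shows that $\mathsf{D}^{\mathrm{b}}(\mod\Lambda)$ has a Serre functor, and then Theorem \ref{Serre=gldim} (3)$\Rightarrow$(1) yields $\gldim\Lambda < \infty$. Applying Theorem \ref{Serre=gldim} (1)$\Rightarrow$(2) once more converts the equivalence into
\[
\underline{\mod}^{\mathbb{Z}}A \simeq \mathsf{D}^{\mathrm{b}}(\mod\Lambda) \simeq \mathsf{K}^{\mathrm{b}}(\proj\Lambda),
\]
which is exactly condition (3) of Theorem \ref{main_thm2}. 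That theorem then gives $\gldim A_0 < \infty$, closing the circle.

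There is no real obstacle beyond assembling the pieces; the only step requiring a moment's thought is $(4)\Rightarrow(1)$, where a less efficient route would try to imitate the tilting/$\Hom$-vanishing argument of Theorem \ref{main_thm2} (2)$\Rightarrow$(1) inside $\mathsf{D}^{\mathrm{b}}(\mod\Lambda)$ without first knowing $\Lambda$ has finite global dimension. Using Proposition \ref{gr_Serre} to get the Serre functor for free, and then invoking Theorem \ref{Serre=gldim}, is the clean shortcut that bypasses this.
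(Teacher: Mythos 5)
Your proposal is correct and follows essentially the same route as the paper: for $(1)\Rightarrow(4)$, the paper's Theorem \ref{equ1}(2) likewise composes the tilting equivalence $\underline{\mod}^{\mathbb{Z}}A\simeq\mathsf{K}^{\mathrm{b}}(\proj\Gamma)$ with the collapse $\mathsf{K}^{\mathrm{b}}(\proj\Gamma)\simeq\mathsf{D}^{\mathrm{b}}(\mod\Gamma)$ afforded by Theorem \ref{gldim_fin} and Theorem \ref{Serre=gldim}; for $(4)\Rightarrow(1)$, the paper also transports the Serre functor of Proposition \ref{gr_Serre}, invokes Theorem \ref{Serre=gldim}, and reduces to Theorem \ref{main_thm2}. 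The only cosmetic difference is that the paper packages the $(1)\Rightarrow(4)$ argument as a separate numbered theorem rather than inline.
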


The implication Theorem \ref{main_thm3} (1) $\Rightarrow$ (2) follows from the following observation.

\begin{thm}\label{equ1}
The following assertions hold.
\begin{enumerate}
\def\labelenumi{(\theenumi)}
\item There exists a triangle-equivalence 
\[
\mathrm{thick} T \longrightarrow \mathsf{K}^{\mathrm{b}}(\mathrm{proj}\Gamma).
\]
\item If $A_0$ has finite global dimension, then there exists a triangle-equivalence 
\[
\underline{\mathrm{mod}}^{\mathbb{Z}}A \longrightarrow \mathsf{D}^{\mathrm{b}}(\mathrm{mod}\Gamma).
\]
\end{enumerate}
\end{thm}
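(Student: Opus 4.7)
The plan is to prove (1) as a direct application of Keller's tilting theorem (Theorem \ref{Keller}), and then to derive (2) by combining (1) with the characterization of finite global dimension in Theorem \ref{Serre=gldim}. All the substantial work — the construction of $T$, the computation of its endomorphism algebra $\Gamma$, and the finiteness of $\gldim \Gamma$ — has already been carried out in Theorems \ref{main_thm1} and \ref{gldim_fin}, so the present theorem will amount to assembling those pieces.

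For part (1), I would apply Theorem \ref{Keller} with ambient triangulated category $\thick T$. Three hypotheses must be verified. That $T$ is a tilting object in $\thick T$ is precisely Theorem \ref{main_thm1}(1). That $\thick T$ is algebraic follows from Lemma \ref{Gra-Fro}, which identifies $\underline{\mod}^{\mathbb{Z}}A$ as the stable category of the Frobenius category $\mod^{\mathbb{Z}}A$; any thick subcategory of an algebraic triangulated category is again algebraic. Hom-finiteness and the Krull-Schmidt property of $\thick T$ are inherited from $\underline{\mod}^{\mathbb{Z}}A$, since its morphism spaces $\underline{\Hom}_A(X,Y)_0$ are quotients of the finite-dimensional spaces provided by Proposition \ref{graded_KS}. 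The endomorphism algebra of $T$ computed inside $\thick T$ coincides with the one computed in $\underline{\mod}^{\mathbb{Z}}A$, which by definition equals $\underline{\End}_A(T)_0 = \Gamma$. Theorem \ref{Keller} therefore delivers the triangle-equivalence
\[
\thick T \simeq \mathsf{K}^{\mathrm{b}}(\proj \Gamma).
\]

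For part (2), assume $\gldim A_0 < \infty$. Then Theorem \ref{main_thm1}(2) gives $\thick T = \underline{\mod}^{\mathbb{Z}}A$, so part (1) yields a triangle-equivalence $\underline{\mod}^{\mathbb{Z}}A \simeq \mathsf{K}^{\mathrm{b}}(\proj \Gamma)$. Moreover, Theorem \ref{gldim_fin} ensures $\gldim \Gamma < \infty$, so by the equivalence $(1)\Leftrightarrow(2)$ of Theorem \ref{Serre=gldim}, the canonical embedding $\mathsf{K}^{\mathrm{b}}(\proj \Gamma) \hookrightarrow \mathsf{D}^{\mathrm{b}}(\mod \Gamma)$ is itself a triangle-equivalence. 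Composing the two equivalences yields the desired
\[
\underline{\mod}^{\mathbb{Z}}A \simeq \mathsf{D}^{\mathrm{b}}(\mod \Gamma).
\]

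The main obstacle is purely formal: confirming that the hypotheses of Keller's theorem (algebraicity, Krull-Schmidt, Hom-finiteness) really do pass to the thick subcategory $\thick T$. Each of these is standard but should be acknowledged explicitly, since without them the citation of Theorem \ref{Keller} is not valid. Once these bookkeeping checks are in place, the proof is essentially a chain of citations.
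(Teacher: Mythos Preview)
Your proof is correct and follows essentially the same route as the paper: apply Theorem~\ref{Keller} together with Theorem~\ref{main_thm1}(1) for part~(1), and for part~(2) combine Theorem~\ref{main_thm1}(2), Theorem~\ref{gldim_fin}, and Theorem~\ref{Serre=gldim}. The only difference is that you spell out the verification of the algebraicity and Krull--Schmidt hypotheses for $\thick T$, whereas the paper leaves these implicit.
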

\begin{proof}
(1) The assertion is immediate from Theorems \ref{Keller} and \ref{main_thm1} (1).

(2) We assume that $A_0$ has finite global dimension.
First by Theorem \ref{Keller} and Theorem \ref{main_thm1} (2), there exists the triangle-equivalence
$\underline{\mathrm{mod}}^{\mathbb{Z}}A \rightarrow \mathsf{K}^{\mathrm{b}}(\mathrm{proj}\Gamma)$.
Moreover $\Gamma$ has finite global dimension by Theorem \ref{gldim_fin}.
By Theorems \ref{Serre=gldim},  the canonical embedding  $\mathsf{K}^{\mathrm{b}}(\mathrm{proj}\Gamma) \rightarrow \mathsf{D}^{\mathrm{b}}(\mathrm{mod}\Gamma)$ is an equivalence.
Composing these equivalences,  we have the desired triangle-equivalence.
\end{proof}

The implication Theorem \ref{main_thm3} (1) $\Rightarrow$ (2) follows from Theorems \ref{equ1} (2).
Now we prove Theorem \ref{main_thm3} (2) $\Rightarrow$ (1).

\begin{proof}[Proof of Theorem \ref{main_thm3} (2) $\Rightarrow$ (1)]
Assume that there exists a triangle-equivalence $\underline{\mathrm{mod}}^{\mathbb{Z}}A \simeq\mathsf{D}^{\mathrm{b}}(\mathrm{mod}\Lambda)$ for some algebra $\Lambda$. 
By Theorem \ref{gr_Serre}, $\mathsf{D}^{\mathrm{b}}(\mathrm{mod}\Lambda)$ has a Serre functor. 
By Theorem \ref{Serre=gldim}, we have a triangle-equivalence $\mathsf{K}^{\mathrm{b}}(\mathrm{proj}\Lambda) \simeq \mathsf{D}^{\mathrm{b}}(\mathrm{mod}\Lambda)$. 
Consequently we have a triangle-equivalence $\underline{\mathrm{mod}}^{\mathbb{Z}}A \simeq \mathsf{K}^{\mathrm{b}}(\mathrm{proj}\Lambda)$.
By Theorem \ref{main_thm2}, $A_0$ has finite global dimension.
\end{proof}

Since finiteness of global dimension is preserved under derived equivalences, we have the following result from Theorem \ref{main_thm3}.

\begin{cor}
If $A_0$ has finite global dimension, then so does the endomorphism algebra of arbitrary tilting object in $\underline{\mod}^{\mathbb{Z}} A$. 
\end{cor}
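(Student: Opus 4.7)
The plan is to exploit the existence of \emph{two} independent triangle-equivalences on $\underline{\mod}^{\mathbb{Z}}A$ and invoke the known preservation of finite global dimension under derived equivalence. Let $T'$ be an arbitrary tilting object in $\underline{\mod}^{\mathbb{Z}}A$ and set $\Lambda := \End_{\underline{\mod}^{\mathbb{Z}}A}(T')$. The category $\underline{\mod}^{\mathbb{Z}}A$ is algebraic (Lemma \ref{Gra-Fro}) and Krull--Schmidt (since $\mod^{\mathbb{Z}}A$ is so by Proposition \ref{graded_KS} and this property descends to the stable quotient), so Keller's tilting theorem (Theorem \ref{Keller}) yields a triangle-equivalence
\[
\underline{\mod}^{\mathbb{Z}}A \simeq \mathsf{K}^{\mathrm{b}}(\proj \Lambda).
\]
In parallel, the constructions of Subsections \ref{existence_tilt_section}--\ref{tri_equ_section} supply the specific tilting object $T$ of \eqref{df_T} with endomorphism algebra $\Gamma$ of \eqref{df_Gamma}; Theorem \ref{gldim_fin} ensures $\gldim \Gamma < \infty$, and Theorem \ref{equ1}(2) then provides a second triangle-equivalence
\[
\underline{\mod}^{\mathbb{Z}}A \simeq \mathsf{D}^{\mathrm{b}}(\mod \Gamma).
\]

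Composing these two equivalences gives $\mathsf{K}^{\mathrm{b}}(\proj \Lambda) \simeq \mathsf{D}^{\mathrm{b}}(\mod \Gamma)$. Since $\gldim \Gamma < \infty$, Theorem \ref{Serre=gldim} identifies $\mathsf{D}^{\mathrm{b}}(\mod \Gamma) \simeq \mathsf{K}^{\mathrm{b}}(\proj \Gamma)$, so the image of the canonical tilting object $\Lambda \in \mathsf{K}^{\mathrm{b}}(\proj \Lambda)$ is realized as a tilting complex in $\mathsf{K}^{\mathrm{b}}(\proj \Gamma)$ whose endomorphism algebra is $\Lambda$. By Rickard's theorem this promotes to a triangle-equivalence $\mathsf{D}^{\mathrm{b}}(\mod \Lambda) \simeq \mathsf{D}^{\mathrm{b}}(\mod \Gamma)$. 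In particular $\mathsf{D}^{\mathrm{b}}(\mod \Lambda)$ inherits a Serre functor from $\mathsf{D}^{\mathrm{b}}(\mod \Gamma)$, and a second application of Theorem \ref{Serre=gldim} concludes $\gldim \Lambda < \infty$.

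The only delicate point is the passage from the equivalence of bounded projective homotopy categories $\mathsf{K}^{\mathrm{b}}(\proj \Lambda) \simeq \mathsf{K}^{\mathrm{b}}(\proj \Gamma)$ to one of bounded derived categories, which is precisely where Rickard's theorem is needed. This step is the formal content of the slogan ``finiteness of global dimension is preserved under derived equivalence,'' which the statement of the corollary already invokes as justification; everything else is a formal combination of results already proved in this section.
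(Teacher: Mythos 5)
Your proof is correct and takes essentially the same approach as the paper: the paper dispatches the corollary in one sentence by citing the well-known fact that finiteness of global dimension is a derived invariant, and your argument simply unpacks that black box — obtaining $\mathsf{K}^{\mathrm{b}}(\proj\Lambda)\simeq\mathsf{K}^{\mathrm{b}}(\proj\Gamma)$ from the two triangle-equivalences, promoting it to $\mathsf{D}^{\mathrm{b}}(\mod\Lambda)\simeq\mathsf{D}^{\mathrm{b}}(\mod\Gamma)$ via Rickard's theorem, and concluding via the Serre-functor criterion of Theorem \ref{Serre=gldim}. The only thing to note is that the paper itself relies on exactly this chain of reasoning implicitly, so there is no genuine divergence in method.
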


This result implies the last assertion of Theorem \ref{main_thm}. 
Thus we finish the proof of Theorem \ref{main_thm}.
\hfill{$\square$}

\subsection{Direct construction of a triangle-equivalence}
\label{direct_section}

When $A_0$ has finite global dimension, we gave a triangle-equivalence $\underline{\mod}^{\mathbb{Z}}A \simeq \mathsf{D}^{\mathrm{b}}(\mod\Gamma)$ in Theorem \ref{equ1}. 
However the construction heavily depends on Theorem \ref{Keller} which is shown by using a lot of techniques in differential graded algebras. 
In this subsection, we give a direct construction of the equivalence without using Theorem \ref{Keller}.
We keep the notations in the previous subsections.

We consider the decomposition $T= \underline{T} \oplus P$ given in Proposition \ref{cal_end}. 
Since $\Gamma \simeq \End_A(\underline{T})_0$, $\Gamma$ acts on $\underline{T}$ from the left naturally.
By this action, we can regard $\underline{T}$ as a $\mathbb{Z}$-graded $\Gamma^{\op} \otimes_K A$-module (see Definition \ref{tesor_grading} for grading on $\Gamma^{\op} \otimes_K A$). 
Thus we have the left derived tensor functor 
\[
-\Lten_{\Gamma}\underline{T}:\mathsf{D}^{\mathrm{b}}(\mod \Gamma) \longrightarrow \mathsf{D}^{\mathrm{b}}(\mod^{\mathbb{Z}}A).
\]

Now we consider the quotient category (cf.\cite{Har,Ne}) $\mathsf{D}^{\mathrm{b}}(\mod^{\mathbb{Z}} A)/\mathsf{K}^{\mathrm{b}}(\proj^{\mathbb{Z}} A)$ and the canonical functor
\begin{eqnarray}
\mathsf{D}^{\mathrm{b}}(\mod^{\mathbb{Z}} A) \longrightarrow \mathsf{D}^{\mathrm{b}}(\mod^{\mathbb{Z}} A)/\mathsf{K}^{\mathrm{b}}(\proj^{\mathbb{Z}} A). \label{Quotient}
\end{eqnarray}

The following triangle-equivalence is a realization of $\underline{\mod}^{\mathbb{Z}}A$ as the quotient category $\mathsf{D}^{\mathrm{b}}(\mod^{\mathbb{Z}} A)/\mathsf{K}^{\mathrm{b}}(\proj^{\mathbb{Z}} A)$.

\begin{thm}\label{Rickard} \cite[Theorem 2.1]{Ric2}
Let $A$ be a $\mathbb{Z}$-graded self-injective algebra. 
The canonical embedding $\mod^{\mathbb{Z}}A \rightarrow \mathsf{D}^{\mathrm{b}}(\mod^{\mathbb{Z}}A)$ induces a triangle-equivalence 
\[
\underline{\mod}^{\mathbb{Z}}A \longrightarrow \mathsf{D}^{\mathrm{b}}(\mod^{\mathbb{Z}} A)/\mathsf{K}^{\mathrm{b}}(\proj^{\mathbb{Z}} A).
\]
\end{thm}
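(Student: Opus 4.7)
The plan is to follow Rickard's classical argument \cite{Ric2}, adapted to the $\mathbb{Z}$-graded setting. The candidate functor $F:\underline{\mod}^{\mathbb{Z}}A \to \mathsf{D}^{\mathrm{b}}(\mod^{\mathbb{Z}}A)/\mathsf{K}^{\mathrm{b}}(\proj^{\mathbb{Z}}A)$ is obtained by composing the canonical embedding $\mod^{\mathbb{Z}}A \hookrightarrow \mathsf{D}^{\mathrm{b}}(\mod^{\mathbb{Z}}A)$ with the Verdier quotient. Because $\proj^{\mathbb{Z}}A$ lies in $\mathsf{K}^{\mathrm{b}}(\proj^{\mathbb{Z}}A)$, any morphism factoring through a projective module is killed in the quotient, so the composition descends to a well-defined functor on $\underline{\mod}^{\mathbb{Z}}A$.

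Next I would equip $F$ with a triangulated structure. For $X \in \mod^{\mathbb{Z}}A$, take a short exact sequence $0 \to X \to I \to \Omega^{-1}X \to 0$ where $I$ is an injective envelope of $X$ in $\mod^{\mathbb{Z}}A$. Since $A$ is self-injective, $I$ is a projective $\mathbb{Z}$-graded $A$-module, so $I \cong 0$ in the quotient. The associated derived triangle $X \to I \to \Omega^{-1}X \to X[1]$ therefore yields a natural isomorphism $F(\Omega^{-1}X) \cong F(X)[1]$, identifying the stable shift with the derived shift. Combined with the fact that short exact sequences in $\mod^{\mathbb{Z}}A$ simultaneously give rise to triangles in $\underline{\mod}^{\mathbb{Z}}A$ (by the Frobenius structure from Lemma \ref{Gra-Fro}) and to standard triangles in $\mathsf{D}^{\mathrm{b}}(\mod^{\mathbb{Z}}A)$, this upgrades $F$ to a triangle functor.

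The main obstacle is full faithfulness: for $X,Y \in \mod^{\mathbb{Z}}A$ one must establish the isomorphism
\[
\Hom_{\mathsf{D}^{\mathrm{b}}(\mod^{\mathbb{Z}}A)/\mathsf{K}^{\mathrm{b}}(\proj^{\mathbb{Z}}A)}(X,Y) \;\cong\; \underline{\Hom}_A(X,Y)_0.
\]
Morphisms on the left are described, via the Verdier calculus of fractions, as equivalence classes of roofs $X \xleftarrow{s} Z \xrightarrow{f} Y$ with $\mathrm{cone}(s) \in \mathsf{K}^{\mathrm{b}}(\proj^{\mathbb{Z}}A)$. The plan is to exploit self-injectivity crucially: since projectives coincide with injectives for $A$, one normalizes such a roof by replacing $Z$ with a suitable truncation of an injective resolution of $X$, reducing the roof to an honest morphism $X \to Y$ in $\mod^{\mathbb{Z}}A$. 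The equivalence relation on roofs is then seen to identify two such morphisms precisely when their difference factors through a projective module, yielding the desired identification. Controlling this normalization carefully, and extracting the correct degree-zero condition, is the delicate point.

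Essential surjectivity then follows formally. The image of $F$ contains every module, and by the isomorphism $X[1] \cong \Omega^{-1}X$ established in the triangle-structure step, $\mathrm{Im}\, F$ is closed under the shifts $[\pm 1]$. Since $\mod^{\mathbb{Z}}A$ generates $\mathsf{D}^{\mathrm{b}}(\mod^{\mathbb{Z}}A)$ as a triangulated category, its image generates the quotient as a triangulated category. Applying Lemma \ref{uses_five}(2) to the full functor $F$ (noting that $\underline{\mod}^{\mathbb{Z}}A$ is Krull-Schmidt by Proposition \ref{graded_KS}) then produces denseness, completing the proof.
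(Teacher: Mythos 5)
The paper itself offers no proof of this statement; it is cited directly from Rickard's \cite[Theorem~2.1]{Ric2} (the same result appears in Keller--Vossieck and Buchweitz), so there is no internal argument to compare against. Your outline does follow Rickard's strategy: descend the composite of the inclusion with the Verdier localization to the stable category, identify the stable shift $\Omega^{-1}$ with the derived shift $[1]$ via the conflation $0\to X\to I\to\Omega^{-1}X\to 0$ with $I$ projective-injective, check full faithfulness directly, and obtain density from generation. Steps one, two, and four are handled correctly; in particular invoking Lemma~\ref{uses_five}(2) together with Proposition~\ref{graded_KS} for density once fullness is known is exactly the right move.

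The place where a genuine gap remains is the full-faithfulness step, which you yourself flag as the crux. The device you propose -- ``replacing $Z$ with a suitable truncation of an injective resolution of $X$'' -- does not actually produce a denominator $Z\to X$ with perfect cone. A brutal truncation $I^{[0,n]}$ of an injective coresolution receives a map \emph{from} $X$, with cone a shift of $\Omega^{-n-1}X$ rather than a perfect complex; dually, a brutal truncation $P^{[-n,0]}$ of a projective resolution maps \emph{to} $X$ but again with cone a shifted module. The standard way to close this gap is to observe that because projectives coincide with injectives in $\mod^{\mathbb{Z}}A$, any morphism $u\colon X\to P$ in $\mathsf{D}^{\mathrm{b}}(\mod^{\mathbb{Z}}A)$ with $P\in\mathsf{K}^{\mathrm{b}}(\proj^{\mathbb{Z}}A)$ is already computed in $\mathsf{K}^{\mathrm{b}}$ (since $P$ is a bounded complex of injectives) and hence is determined by its component $u^0\colon X\to P^0$; from this one deduces faithfulness (a morphism $X\to Y$ killed in the quotient factors through the projective module $P^0$) and, with a further argument using that every object of the quotient is isomorphic to a suitably desuspended syzygy of a module, fullness. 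Alternatively one can build the quasi-inverse ``stabilization'' functor $\mathsf{D}^{\mathrm{b}}(\mod^{\mathbb{Z}}A)/\mathsf{K}^{\mathrm{b}}(\proj^{\mathbb{Z}}A)\to\underline{\mod}^{\mathbb{Z}}A$ directly, as in Keller--Vossieck or Buchweitz. As written, your normalization sentence names the right ingredients but does not supply the argument, and the specific truncation it appeals to is not a Verdier denominator.
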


Now we consider the composition 
\begin{eqnarray}
H: \mathsf{D}^{\mathrm{b}}(\mod^{\mathbb{Z}}A) \longrightarrow \mathsf{D}^{\mathrm{b}}(\mod^{\mathbb{Z}} A)/\mathsf{K}^{\mathrm{b}}(\proj^{\mathbb{Z}} A) \longrightarrow \underline{\mod}^{\mathbb{Z}}A. \label{df_H}
\end{eqnarray}
of the canonical functor and a quasi-inverse of the triangle-equivalence given in Theorem \ref{Rickard}.
Moreover we consider the composition
\begin{eqnarray*}
G:\mathsf{D}^{\mathrm{b}}(\mod \Gamma) \xrightarrow{\ -\Lten_{\Gamma}\underline{T}\ } \mathsf{D}^{\mathrm{b}}(\mod^{\mathbb{Z}}A) 
\xrightarrow{\ H\ } \underline{\mod}^{\mathbb{Z}}A. 
\end{eqnarray*}

\begin{thm}\label{equ2}
Under the above notations, the following assertions hold.
\begin{enumerate}
\def\labelenumi{(\theenumi)} 
\item $G$ induces a triangle-equivalence $\mathsf{K}^{\mathrm{b}}(\proj\Gamma) \rightarrow \thick T$.
\item If $A_0$ has finite global dimension, then $G$ is a triangle-equivalence.
\end{enumerate}
\end{thm}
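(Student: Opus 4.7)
The plan is to reduce everything to the computation that $G$ sends the tilting object $\Gamma \in \mathsf{K}^{\mathrm{b}}(\proj\Gamma)$ to the tilting object $T \in \thick T$, and then to propagate this identification using the generating property of tilting objects together with Lemma \ref{uses_five}. First I would show $G(\Gamma) \cong T$ in $\underline{\mod}^{\mathbb{Z}}A$. Since $\Gamma \Lten_{\Gamma}\underline{T} \cong \underline{T}$ canonically in $\mathsf{D}^{\mathrm{b}}(\mod^{\mathbb{Z}}A)$, the image $H(\underline{T})$ is $\underline{T}$ viewed through the inverse of the Rickard equivalence from Theorem \ref{Rickard}, and this equals $T$ in $\underline{\mod}^{\mathbb{Z}}A$ by Proposition \ref{cal_end}(2). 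Next I would verify that for every $i \in \mathbb{Z}$ the natural map
\[
G : \Hom_{\mathsf{K}^{\mathrm{b}}(\proj\Gamma)}(\Gamma, \Gamma[i]) \longrightarrow \underline{\Hom}_A(T,T[i])_0
\]
is an isomorphism: for $i \neq 0$ both spaces vanish, by Example \ref{ex_tilt} on the left and Lemma \ref{van_self-ext} on the right, while for $i = 0$ both coincide with $\Gamma$ and the induced map is the tautological identification built into the definition $\Gamma = \underline{\End}_A(T)_0 \cong \End_A(\underline{T})_0$ from Proposition \ref{cal_end}(4).

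For (1), the subcategory $\{X \in \mathsf{K}^{\mathrm{b}}(\proj\Gamma) \mid G \text{ is bijective on } \Hom(X,\Gamma[i]) \text{ for all } i\}$ is closed under shifts and triangles and contains $\Gamma$, which generates $\mathsf{K}^{\mathrm{b}}(\proj\Gamma)$ as a triangulated category by Example \ref{ex_tilt}. Applying Lemma \ref{uses_five}(1) (with $\mathcal{X} = \add\{\Gamma[i]\}_{i \in \mathbb{Z}}$) then gives that $G|_{\mathsf{K}^{\mathrm{b}}(\proj\Gamma)}$ is fully faithful as a functor into $\thick T$. For essential surjectivity onto $\thick T$, the essential image contains $T = G(\Gamma)$, is closed under shifts and triangles, and lies in $\thick T$, so it generates $\thick T$; invoking Krull--Schmidtness (Propositions \ref{derived_KS} and \ref{graded_KS}) and Lemma \ref{uses_five}(2) yields density, completing (1).

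For (2), assume $A_0$ has finite global dimension. Theorem \ref{main_thm1}(2) upgrades $\thick T$ to the whole stable category $\underline{\mod}^{\mathbb{Z}}A$, while Theorem \ref{gldim_fin} gives that $\Gamma$ itself has finite global dimension, so by Theorem \ref{Serre=gldim} the canonical embedding $\mathsf{K}^{\mathrm{b}}(\proj\Gamma) \to \mathsf{D}^{\mathrm{b}}(\mod\Gamma)$ is an equivalence. Combining these two enlargements with (1) yields the triangle-equivalence $G : \mathsf{D}^{\mathrm{b}}(\mod\Gamma) \to \underline{\mod}^{\mathbb{Z}}A$.

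The main obstacle is the identification $G(\Gamma) \cong T$ together with the compatibility of the induced map on $\Hom(\Gamma,\Gamma)$ with the algebra isomorphism $\Gamma \cong \End_A(\underline{T})_0$. Both require carefully tracking $\underline{T}$ through the composition of the derived tensor $-\Lten_{\Gamma}\underline{T}$, the Verdier quotient $\mathsf{D}^{\mathrm{b}}(\mod^{\mathbb{Z}}A) \to \mathsf{D}^{\mathrm{b}}(\mod^{\mathbb{Z}}A)/\mathsf{K}^{\mathrm{b}}(\proj^{\mathbb{Z}}A)$, and a quasi-inverse of the Rickard equivalence of Theorem \ref{Rickard}. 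Once these naturalities are established, the remainder of the argument is essentially formal and reduces to assembling Lemma \ref{uses_five}, Theorem \ref{main_thm1}(2), Theorem \ref{gldim_fin} and Theorem \ref{Serre=gldim}.
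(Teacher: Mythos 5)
Your proposal is correct and follows essentially the same route as the paper: identify $G(\Gamma)\cong T$, verify the Hom-isomorphisms on $\Gamma$ using Lemma \ref{van_self-ext} and Proposition \ref{cal_end}(4), deduce full faithfulness via Lemma \ref{uses_five}(1), and then for (2) combine Theorem \ref{main_thm1}(2), Theorem \ref{gldim_fin} and Theorem \ref{Serre=gldim}. The only small difference is that you spell out the density argument explicitly via Lemma \ref{uses_five}(2) and Krull--Schmidtness of $\mathsf{K}^{\mathrm{b}}(\proj\Gamma)$, which the paper leaves implicit in the phrase ``Thus $G$ induces a triangle-equivalence.''
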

\begin{proof}
(1) Clearly $G(\Gamma)$ is isomorphic to $\underline{T}$, which is isomorphic to $T$. Moreover $G$ induces an isomorphism 
\[
\mathrm{Hom}_{\mathsf{D}^{\mathrm{b}}(\mathrm{mod} \Gamma)}(\Gamma,\Gamma[i]) \simeq \underline{\mathrm{Hom}}_A(G(\Gamma),G(\Gamma)[i])_0
\]
for any $i \in \mathbb{Z}$. Indeed we have $\mathrm{Hom}_{\mathsf{D}^{\mathrm{b}}(\mathrm{mod} \Gamma)}(\Gamma,\Gamma[i]) =0= \underline{\mathrm{Hom}}_A(G(\Gamma),G(\Gamma)[i])_0$ for any $i \neq 0$, and $\mathrm{End}_{\mathsf{D}^{\mathrm{b}}(\mathrm{mod} \Gamma)}(\Gamma)\simeq \Gamma \simeq \End_A(\underline{T})_0 \simeq \underline{\mathrm{End}}_A(G(\Gamma))_0$ by Proposition \ref{cal_end} (4).

By this and Lemma \ref{uses_five} (1), $G$ is fully faithful on $\mathsf{K}^{\mathrm{b}}(\mathrm{proj} \Gamma)$.
Thus $G$ induces a triangle-equivalence $\mathsf{K}^{\mathrm{b}}(\mathrm{proj} \Gamma) \rightarrow \mathrm{thick} T$.

(2) If $A_0$ has finite global dimension, then $\mathrm{thick}T=\underline{\mathrm{mod}}^{\mathbb{Z}}A$ by Theorem \ref{main_thm1} (2).
Also the canonical embedding $\mathsf{K}^{\mathrm{b}}(\mathrm{proj} \Gamma) \rightarrow \mathsf{D}^{\mathrm{b}}(\mod \Gamma)$ is an equivalence by Lemma \ref{Serre=gldim} and Theorem  \ref{gldim_fin}.
Thus by (1), $G$ is an equivalence. 
\end{proof}

\subsection{Examples}
In this subsection, we give some examples and applications of results in previous subsections. 

First application is famous Happel's equivalence \eqref{intro_equation} \cite{Ha2}, which  gives a relationship between representation theory of algebras and that of the trivial extensions. 
We show it as a consequence of our results.

\begin{ex} \cite[Theorem 2.3]{Ha2}\label{equ_Happel}
Let $\Lambda$ be an algebra. 
The \emph{trivial extension} $A$ of $\Lambda$ is defined as follows.
\begin{list}{$\bullet$}{}
\item $A=\Lambda \oplus D\Lambda$ as an abelian group.
\item The multiplication in $A$ is given by
\[
(x,f) \cdot (y,g):= (xy,xg+fy).
\]
for any $x, y \in \Lambda$ and $f,g \in D\Lambda$.
Here $xg$ and $fy$ are defined by the natural $\Lambda^{\op} \otimes_K \Lambda$-module structure on $D\Lambda$.
\end{list}
Then $A$ is an algebra, and moreover it is easy to check that $A$ is symmetric (see Definition \ref{df_sym}). 
Furthermore $A$ has a structure of a positively graded algebra defined by
\[
A_i := \begin{cases}
\Lambda & (i=0), \\
D\Lambda & (i=1), \\
0 & (i \geq 2).
\end{cases}
\]

Under this setting, $\Lambda$ has finite global dimension if and only if there exists a triangle-equivalence
\[
\underline{\mathrm{mod}}^{\mathbb{Z}}A \simeq \mathsf{D}^{\mathrm{b}}(\mod \Lambda).
\]

\begin{proof}
First we assume that $\Lambda$ has finite global dimension.
Let $T$ be the tilting object constructed in \eqref{df_T}, and $\underline{T}$ the direct summand of $T$ given in Proposition \ref{cal_end}.
Then we have $\underline{T}=\Lambda$, and so $\underline{\End}_A(T)_0 = \End_A(\Lambda)_0 \simeq \Lambda$ by Proposition \ref{cal_end}.
Thus by Theorem \ref{equ1}, we have the desired triangle-equivalence. 

The converse follows from Theorem \ref{main_thm3}.
\end{proof}

\end{ex}

In the second application, we study positively graded self-injective algebras which have Gorenstein parameters. 
As a consequence of Theorem \ref{equ1}, we have Chen's \cite{Ch} result.

\begin{ex}\cite[Corollary 1.2]{Ch} \label{equ_Chen}
Let $A$ be a positively graded self-injective algebra of Gorenstein parameter $\ell$ (see Definition \ref{df_Gor_par}).
Let
\[
\Gamma:=\left( \begin{array}{ccccc} 
A_0 & A_1 & \cdots & A_{\ell-2} &  A_{\ell-1}  \\
 & A_0 & \cdots & A_{\ell-3} & A_{\ell-2}  \\
 &  & \ddots & \vdots & \vdots \\
 &  &  & A_0 & A_1   \\
0 &  & & &  A_0 
\end{array} \right).
\]
Then $A_0$ has finite global dimension if and only if there exists a triangle-equivalence 
\[
\underline{\mod}^{\mathbb{Z}}A \simeq \mathsf{D}^{\mathrm{b}}(\mod \Gamma).
\]

\begin{proof}
Let $T$ be the object defined in \eqref{df_T}, and $\underline{T}$ the direct summand of $T$ defined in Proposition \ref{cal_end}.
Clearly we have $\underline{T}=\bigoplus_{i=0}^{\ell-1}A(i)_{\leq 0}$ and $\underline{\End}_A(T)_0 \simeq \End_A(\underline{T})_0 \simeq \Gamma$ by Propositions \ref{cal_end} and \ref{nonstab_end}.
Then the assertion follows immediately from Theorem \ref{equ1}.
\end{proof}

\begin{rmk}\label{Happel<Chen}
The trivial extensions are positively graded symmetric algebras of Gorenstein parameter $1$. 
Thus Example \ref{equ_Happel} is a special case of Example \ref{equ_Chen}.
\end{rmk}
\end{ex}

Next we give two more concrete examples.

\begin{ex}\label{ex_concrete1}
Let $n$ be a positive integer. 
Let $A=K[x]/(x^{n+1})$ be a positively graded algebra with $\deg x := 1$.
Then $A$ is symmetric and has Gorenstein parameter $n$. 

Since global dimension of $A_0=K$ is zero, the category $\underline{\mod}^{\mathbb{Z}}A$ has a tilting object $T$ given in \eqref{df_T} by Theorem \ref{main_thm1} (2).
By Theorem \ref{equ1}, there exists a triangle-equivalence
\begin{eqnarray}
\underline{\mod}^{\mathbb{Z}}A \simeq \mathsf{D}^{\mathrm{b}}(\mod \Gamma) \label{ex_equ}
\end{eqnarray}
where $\Gamma=\underline{\End}_A(T)_0$.
 By Example \ref{equ_Chen}, the algebra $\Gamma$ is isomorphic to the $n \times n$ upper triangular matrix algebra over $K$.

Let us consider the special case $n=2$ and $A=K[x]/(x^3)$.
For $i=1,2$, let $X^i$ be the $\mathbb{Z}$-graded ideal $(x^i)/(x^{3})$ of $A$.
Then we have a chain $A \supset X^1 \supset X^2$ of $\mathbb{Z}$-graded ideals of $A$.
It is known that $\{ X^i(j) \ | \ i=1,2, \ j \in \mathbb{Z} \}$ is a complete set of indecomposable non-projective $\mathbb{Z}$-graded $A$-modules, and the Auslander-Reiten quiver of $\underline{\mod}^{\mathbb{Z}}A$ is given as follows (cf. \cite[Chapter V]{ASS}).
\begin{eqnarray}
\begin{xy}
(-36,12) *{X^1(-1)}="B" ,
(-48,0) *{X^2(-1)}="A" ,
(-12,12) *{X^1}="D" ,
(-24,0) *{X^2}="C" ,
(12,12) *{X^1(1)}="F" ,
(0,0) *{X^2(1)}="E" ,
(36,12) *{X^1(2)}="H" ,
(24,0) *{X^2(2)}="G" ,
(60,12) *{X^1(3)}="J" ,
(48,0) *{X^2(3)}="I" ,
(-60,6) *{\cdots\cdots},
(72,6) *{\cdots\cdots},

\ar "A" ; "B"
\ar "B" ; "C"
\ar "C" ; "D"
\ar "D" ; "E"
\ar "E" ; "F"
\ar "F" ; "G"
\ar "G" ; "H"
\ar "H" ; "I"
\ar "I" ; "J"
\ar@{.>} "C" ; "A"
\ar@{.>} "E" ; "C"
\ar@{.>} "G" ; "E"
\ar@{.>} "I" ; "G"
\ar@{.>} "D" ; "B"
\ar@{.>} "F" ; "D"
\ar@{.>} "H" ; "F"
\ar@{.>} "J" ; "H"
\end{xy} \label{AR1}
\end{eqnarray}
Here dotted arrows show the Auslander-Reiten translation in $\underline{\mod}^{\mathbb{Z}}A$.

Next we write the Auslander-Reiten quiver of $\mathsf{D}^{\mathrm{b}}(\mod \Gamma)$.
In this case, $\Gamma=\underline{\End}_A(T)_0$ is isomorphic to the $2 \times 2$ upper triangular matrix algebra over $K$.
Define $\Gamma$-modules by $P^1:= (K \ K)$, $P^2:=(0 \ K)$ and $I^1:=(K \ 0)$.
The set $\{P^1,P^2,I^1\}$ is a complete set of indecomposable $\Gamma$-modules, 
and the Auslander-Reiten quiver of $\mathsf{D}^{\mathrm{b}}(\mod \Gamma)$ is given as follows (cf. \cite[Chapter I.5]{Ha1}).
\begin{eqnarray}
\begin{xy}
(-36,12) *{P^2[-1]}="B" ,
(-48,0) *{I^1[-2]}="A" ,
(-12,12) *{I^1[-1]}="D" ,
(-24,0) *{P^1[-1]}="C" ,
(12,12) *{P^1}="F" ,
(0,0) *{P^2}="E" ,
(36,12) *{P^2[1]}="H" ,
(24,0) *{I^1}="G" ,
(60,12) *{I^1[1]}="J" ,
(48,0) *{P^1[1]}="I" ,
(-60,6) *{\cdots\cdots},
(72,6) *{\cdots\cdots},

\ar "A" ; "B"
\ar "B" ; "C"
\ar "C" ; "D"
\ar "D" ; "E"
\ar "E" ; "F"
\ar "F" ; "G"
\ar "G" ; "H"
\ar "H" ; "I"
\ar "I" ; "J"
\ar@{.>} "C" ; "A"
\ar@{.>} "E" ; "C"
\ar@{.>} "G" ; "E"
\ar@{.>} "I" ; "G"
\ar@{.>} "D" ; "B"
\ar@{.>} "F" ; "D"
\ar@{.>} "H" ; "F"
\ar@{.>} "J" ; "H"
\end{xy} \label{AR2}
\end{eqnarray}
Here dotted arrows show the Auslander-Reiten translation in $\mathsf{D}^{\mathrm{b}}(\mod\Gamma)$.

The above Auslander-Reiten quivers of $\underline{\mod}^{\mathbb{Z}}A$ and $\mathsf{D}^{\mathrm{b}}(\mod \Gamma)$ have the same shape. 
This is a consequence of our triangle-equivalence Theorem \ref{equ1}.

\end{ex}

\begin{ex}\label{ex_concrete2}
Let $n$ be a positive integer and $A=K[x_1,\cdots,x_n]/(x^2_i \ | \ 1 \leq i \leq n)$.
We define a grading on $A$ by $\deg x_i := 1$ for any $1 \leq i \leq n$.
Then $A$ is a positively graded symmetric algebra of Gorenstein parameter $n$. 

Since global dimension of $A_0=K$ is zero, Theorem \ref{main_thm2} shows that $\underline{\mod}^{\mathbb{Z}}A$ has a tilting object $T$ given in \eqref{df_T}. 
By Theorem \ref{equ1},  we have a triangle-equivalence
\[
\underline{\mod}^{\mathbb{Z}}A \simeq  \mathsf{D}^{\mathrm{b}}(\mod \Gamma).
\]
where $\Gamma=\underline{\End}_A(T)_0$.
By Example \ref{equ_Chen}, the algebra $\Gamma$ is given by the following quiver with relations.
\[
\xymatrix{
1\ar@<1ex>[r]^{\alpha^1_1}_{\cdot} \ar@<-1ex>[r]^{\cdot}_{\alpha^1_{n}} & 2 \ar@<1ex>[r]^{\alpha^2_{1} \ \ \ }_{\cdot \ \ \ } \ar@<-1ex>[r]^{\cdot \ \ \ }_{\alpha^2_{n} \ \ \ } &  \cdots \cdots \ar@<1ex>[r]^{\alpha^{n-2}_{1}}_{\cdot} \ar@<-1ex>[r]^{\cdot}_{\alpha^{n-2}_{n}} & n-1 \ar@<1ex>[r]^{\ \ \ \alpha^{n-1}_{1}}_{\ \ \ \cdot} \ar@<-1ex>[r]^{\ \ \ \cdot}_{\ \ \ \alpha^{n-1}_{n}} & n
}
\]
\[
\begin{cases}
\alpha^i_{j}\alpha^{i+1}_j=0 & (1 \leq i \leq n-2,\ 1 \leq j \leq n) \\
\alpha^i_j \alpha^{i+1}_k = \alpha^i_k \alpha^{i+1}_{j} & (1 \leq i \leq n-2,\ 1 \leq j,k \leq n)
\end{cases}
\]

\end{ex}

\section{Application to higher preprojective algebras}
\label{app_pp_section}

In this section, we apply our results to a certain class of graded self-injective algebras called higher preprojective algebras. 
In particular we give another proof of \cite[Theorem 4.7]{IO2} in our context.

Throughout this section, we fix a positive integer $n$.

\subsection{Main result}

First we give basic definitions of higher preprojective algebras and higher represetation-finite algebras. 
After that we give our main result in this section.

\begin{df}\label{df_pp}
Let $\Lambda$ be an algebra with global dimension at most $n$. 
Then the tensor algebra
\[
\Pi=\Pi_{n+1}(\Lambda) := \mathrm{T}_{\Lambda}(\Ext^n_{\Lambda}(D\Lambda,\Lambda))
\]
of the $\Lambda^{\op} \otimes_K \Lambda$-module $\Ext^n_{\Lambda}(D\Lambda,\Lambda)$ is called the \emph{$(n+1)$-preprojective algebra} of $\Lambda$.

This is naturally regarded as a graded algebra, i.e. the degree $i$ part $\Pi_i$ is the $i$-th tensor product $\Ext^n_{\Lambda}(D\Lambda,\Lambda) \otimes_{\Lambda} \cdots \otimes_{\Lambda} \Ext^n_{\Lambda}(D\Lambda,\Lambda)$.
\end{df}

\begin{rmk}
The $(n+1)$-preprojective algebras are the $0$-th homology of derived $(n+1)$-preprojective algebras which are introduced in \cite{KVdB}.
\end{rmk}

The homological behavior of the $(n+1)$-preprojective algebra $\Pi$ becomes very nice if we assume that $\Lambda$ is an $n$-representation finite algebra defined as follows.

\begin{df}\cite{Iya2,IO1}\label{df_rep_fin}
An algebra $\Lambda$ is called \emph{$n$-representation finite} if it satisfies the following conditions.
\begin{enumerate}
\def\labelenumi{(\theenumi)} 
\item $\mathrm{gl.dim}\Lambda \leq n$.
\item There exists a $\Lambda$-module $M$ satisfying 
\begin{eqnarray*}
\add M &=& \{ X \in \mathrm{mod}\Lambda \ | \ \mbox{$\mathrm{Ext}^{i}_{\Lambda}(M,X)=0$ for any $1 \leq i \leq n-1$} \} \\
&=&  \{ X \in \mathrm{mod}\Lambda \ | \ \mbox{$\mathrm{Ext}^i_{\Lambda}(X,M)=0$ for any $1 \leq i \leq n-1$} \}.
\end{eqnarray*}
We call such $M$ an \emph{$n$-cluster tilting $\Lambda$-module}.
\end{enumerate}
\end{df}

\begin{ex}\label{ex_pp}
We explain our conditions for the case $n=1$.
The condition (1) above means that $\Lambda$ is hereditary. 
The condition (2) above means that there exists a $\Lambda$-module $M$ such that $\add M=\mod \Lambda$.
Consequently $1$-representation finite algebras are nothing but representation finite hereditary algebras.
By Gabriel's theorem (see \cite[Chapter VII, Theorem 1.7, Theorem 5.10]{ASS}), they are Morita equivalent to path algebras of Dynkin quivers.
\end{ex}

The following gives a basic property of $\Pi$.

\begin{prop} \cite[Corollary 3.4]{IO2} \label{pp_selfinj}
Let $\Lambda$ be an $n$-representation finite algebra. 
Then $\Pi$ is a graded self-injective algebra.
\end{prop}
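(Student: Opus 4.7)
The plan is to realize $\Pi$ as the graded endomorphism algebra $\End_{\Lambda}(M)$, where $M$ is the $n$-cluster tilting module of Definition~\ref{df_rep_fin}, and then to deduce graded self-injectivity from the fact that the higher Nakayama functor $\nu_n := - \Lten_{\Lambda} D\Lambda[-n]$ restricts to an autoequivalence of $\add M \subset \mathsf{D}^{\mathrm{b}}(\mod\Lambda)$.

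The key structural input is Iyama's theorem for $n$-representation finite algebras: the functor $\nu_n$ preserves $\add M$, each $\nu_n^{-i}(\Lambda)$ lies in $\mod\Lambda$ (concentrated in cohomological degree $0$), and $M \cong \bigoplus_{i \geq 0} \nu_n^{-i}(\Lambda)$ is a finite direct sum. Taking this for granted, I would first identify the graded pieces: $\Pi_1 = \Ext^n_{\Lambda}(D\Lambda,\Lambda) = \nu_n^{-1}(\Lambda)$ (using $\gldim\Lambda \leq n$), and then by induction $\Pi_i = \Pi_1^{\otimes_{\Lambda} i} \cong \nu_n^{-i}(\Lambda)$. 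The induction step requires that the classical tensor with $\Pi_1$ agrees with the derived one on $\add M$, which follows from the higher $\Ext$-vanishing built into the definition of $n$-cluster tilting. Summing over $i$ produces a graded isomorphism $\Pi \cong M$ of right $\Lambda$-modules, and checking compatibility of the multiplication on $\Pi$ with the composition in $\End_{\Lambda}(M)$ upgrades this to a graded algebra isomorphism.

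Once $\Pi \cong \End_{\Lambda}(M)$ is established, I would deduce graded self-injectivity as follows. The Morita equivalence $\Hom_{\Lambda}(M,-): \add M \xrightarrow{\sim} \proj\Pi$ transports the autoequivalence $\nu_n|_{\add M}$ to the Nakayama functor $-\otimes_{\Pi} D\Pi: \proj\Pi \to \inj\Pi$; since the former is an equivalence, so must the latter be, which is precisely the self-injectivity of $\Pi$. Moreover, $\nu_n$ sends the graded summand $\nu_n^{-i}(\Lambda)$ to $\nu_n^{-(i-1)}(\Lambda)$, so the induced isomorphism $\Pi \cong D\Pi$ respects the grading up to a single global shift given by the largest $i$ with $\nu_n^{-i}(\Lambda) \neq 0$; this shift is the Gorenstein parameter.

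The main obstacle is the identification $\Pi_i \cong \nu_n^{-i}(\Lambda)$. While the non-derived tensor products defining $\Pi$ are straightforward to write down, establishing that they agree with the corresponding derived tensor products requires repeatedly invoking the $n$-cluster tilting $\Ext$-vanishing and carefully tracking the $\Lambda$-bimodule structure on $\Ext^n_{\Lambda}(D\Lambda,\Lambda)$. Once this is in hand, the self-injectivity conclusion is essentially a formal consequence of the Serre/Nakayama correspondence.
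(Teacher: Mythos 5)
Your central claim is incorrect, and this invalidates the whole argument: the $(n+1)$-preprojective algebra $\Pi$ is \emph{not} isomorphic (as a graded algebra, or even as a $K$-vector space) to the higher Auslander algebra $\End_{\Lambda}(M)$. What is actually true---and is the paper's Lemma~\ref{app_lem1} together with Proposition~\ref{app_prop1}(3)---is that $\Pi \cong \bigoplus_{i\geq 0}\tau_n^{-i}(\Lambda)$ as \emph{right $\Lambda$-modules}, so that $\Pi \in \add M$ and indeed $\add \Pi = \add M$. But the step where you ``upgrade'' the $\Lambda$-module isomorphism $\Pi \cong M$ to an algebra isomorphism $\Pi \cong \End_\Lambda(M)$ has no content: $M$ and $\End_\Lambda(M)$ are different objects, and in general $\dim_K \Pi \neq \dim_K \End_\Lambda(M)$. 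For instance, take $n=1$ and $\Lambda = K(1\to 2)$; then $M = P_1 \oplus P_2 \oplus S_1$, $\dim_K \End_\Lambda(M) = 5$, but the preprojective algebra of type $A_2$ has $\dim_K \Pi = 4$. The algebra $\End_\Lambda(M)$ is the $(n+1)$-Auslander algebra of $\Lambda$; it has finite global dimension and is essentially never self-injective, so no argument culminating in ``$\End_\Lambda(M)$ is self-injective'' can be correct.

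The secondary step is also wrong on its own terms. The equivalence $\Hom_\Lambda(M,-)\colon \add M \to \proj \End_\Lambda(M)$ transports the Nakayama functor $-\otimes D\End_\Lambda(M)$ to $N \mapsto D\Hom_\Lambda(N,M)$, and by Serre duality in $\mathsf{D}^{\mathrm{b}}(\mod\Lambda)$ one has $D\Hom(N,M) \cong \Hom(M,\nu N)$ where $\nu = \nu_n[n]$ is the \emph{genuine} Serre functor. Since $\nu N = \nu_n(N)[n]$ typically lies outside $\add M \subset \mod\Lambda$, the induced functor is not an equivalence onto $\proj\End_\Lambda(M)$; confusing the Serre functor $\nu$ with its shift $\nu_n$ is precisely what leads you to the false conclusion. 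The actual proof of this proposition (Corollary~3.4 of \cite{IO2}) is of a different nature: one works directly with $\Pi \cong \bigoplus_i\tau_n^{-i}(\Lambda)$, uses Proposition~\ref{app_prop1}(2) (each indecomposable projective $e\Lambda$ is sent to an indecomposable injective after finitely many applications of $\tau_n^{-1}$), and shows that the indecomposable graded projective $\Pi$-modules $e\Pi = \bigoplus_i \tau_n^{-i}(e\Lambda)$ coincide, up to grade shift, with the indecomposable graded injectives $D(\Pi e')$; equivalently, one verifies the $(n+1)$-Calabi--Yau property coming from the derived preprojective algebra of \cite{KVdB}. Your use of the ladder $\Pi_i \cong \tau_n^{-i}(\Lambda)$ is on the right track, but it must feed into a socle/Nakayama computation for $\Pi$ itself, not an identification with an endomorphism algebra.
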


Now we apply Theorem \ref{equ1} to $\Pi$ for an $n$-representation finite algebra $\Lambda$. 
Since $\Pi_0=\Lambda$ has finite global dimension, the stable category $\underline{\mathrm{mod}}^{\mathbb{Z}}\Pi$ has a tilting object $T$ given in \eqref{df_T}.
Thus we have a triangle-equivalence
\[
\underline{\mathrm{mod}}^{\mathbb{Z}}\Pi \simeq \mathsf{D}^{\mathrm{b}}(\mod \underline{\End}_{\Pi}(T)_0).
\]

The following main result in this section gives a simple description of $\underline{\End}_{\Pi}(T)_0$.

\begin{thm}\label{IOProp2}
Let $\Lambda$ be an $n$-representation finite algebra, $T$ the tilting object in $\underline{\mathrm{mod}}^{\mathbb{Z}}\Pi$ given \eqref{df_T}.
Then there exists an algebra isomorphism 
\[
\underline{\End}_{\Pi}(T)_0 \simeq \underline{\End}_{\Lambda}(\Pi).
\]
\end{thm}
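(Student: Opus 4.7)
The plan is to compute both sides of the claimed isomorphism explicitly and exhibit a natural algebra isomorphism via the left-multiplication pairing in $\Pi$.

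First, apply Proposition \ref{cal_end} to replace $\underline{\End}_\Pi(T)_0$ by $\End_\Pi(\underline{T})_0$, where $\underline{T}$ is the sum of non-projective indecomposable summands of $T = \bigoplus_{i \geq 0} \Pi(i)_{\leq 0}$ in $\mod^\mathbb{Z}\Pi$. For each primitive idempotent $e$ of $\Lambda = \Pi_0$, let $\ell_e$ denote the top degree of the indecomposable graded projective $e\Pi$. Then $e\Pi(i)_{\leq 0}$ is projective if and only if $i \geq \ell_e$, so the non-projective summands of $T$ are precisely the $e\Pi(i)_{\leq 0}$ for $0 \leq i < \ell_e$. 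Since $\Pi(i)_{\leq 0} = \Pi(i)/\Pi(i)_{\geq 1}$ is cyclic on the degree-$(-i)$ element, and since the target of any map into $f\Pi(j)_{\leq 0}$ is concentrated in degrees $\leq 0$ (so the annihilation condition by $\Pi_{\geq i+1}$ is automatic), one obtains $\Hom_\Pi(e\Pi(i)_{\leq 0}, f\Pi(j)_{\leq 0})_0 \simeq f\Pi_{j-i}e$ for $i \leq j$ and $0$ otherwise. Summing yields
\[
\End_\Pi(\underline{T})_0 \;\simeq\; \bigoplus_{\substack{e,f \\ 0 \leq i < \ell_e,\ 0 \leq j < \ell_f \\ i \leq j}} f\Pi_{j-i}e.
\]

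For the right-hand side, I would invoke the structural fact, standard for $n$-representation finite algebras, that $\Pi_i \simeq \tau_n^{-i}\Lambda$ as right $\Lambda$-modules, where $\tau_n$ is the $n$-Auslander--Reiten translation; hence $\Pi = \bigoplus_{e,\,0 \leq k \leq \ell_e} e\Pi_k$ as right $\Lambda$-module, with the projective summands being exactly the $e\Pi_0 = e\Lambda$. Consequently
\[
\underline{\End}_\Lambda(\Pi) \;\simeq\; \bigoplus_{\substack{e,f \\ 1 \leq k \leq \ell_e,\ 1 \leq l \leq \ell_f}} \underline{\Hom}_\Lambda(e\Pi_k,\,f\Pi_l).
\]
The reindexing $k = i+1$, $l = j+1$ then matches the index sets on the two sides bijectively, and the difference $l-k = j-i$ is preserved. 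The algebra structure (composition) on both sides reduces, via these identifications, to the multiplication of $\Pi$, so it suffices to match the Hom-spaces.

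The key remaining step, and the main obstacle, is to establish for $k, l \geq 1$ the natural isomorphism
\[
f\Pi_{l-k}e \;\xrightarrow{\sim}\; \underline{\Hom}_\Lambda(e\Pi_k,\,f\Pi_l) \quad (\text{if } k \leq l),
\]
given by left multiplication $\xi \mapsto (x \mapsto \xi x)$, together with the vanishing $\underline{\Hom}_\Lambda(e\Pi_k, f\Pi_l) = 0$ for $k > l$. The vanishing is the ``no backwards maps'' property of the $n$-cluster tilting subcategory $\add(\bigoplus_i \tau_n^{-i}\Lambda)$: no nonzero $\Lambda$-linear map exists from $\tau_n^{-k}\Lambda$ to $\tau_n^{-l}\Lambda$ when $k>l$, modulo projectives. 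Surjectivity of the left-multiplication map for $k \leq l$ asserts that every stable $\Lambda$-linear map between $n$-AR translates of indecomposable projectives lifts to left multiplication by an element of $\Pi_{l-k}$, which is a consequence of $\tau_n$ acting as the appropriate equivalence of the $n$-cluster tilting subcategory and of the identification $\Pi_{l-k} \otimes_\Lambda \Pi_k \xrightarrow{\sim} \Pi_l$. These are precisely the structural features of $n$-representation finite algebras that Iyama--Oppermann established; the proof in the paper will either derive this Hom-isomorphism from the general theory or cite the relevant results, after which both sides are identified summand by summand, completing the proof.
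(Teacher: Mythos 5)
Your overall strategy matches the paper's: both pass to $\End_{\Pi}(\underline{T})_0$ via Proposition \ref{cal_end}, compute graded $\Pi$-Homs between truncations $\Pi(i)_{\leq 0}$, and then identify the result with stable $\Lambda$-Homs between $n$-Auslander--Reiten translates of $\Lambda$, with Proposition \ref{app_prop1} and Lemma \ref{app_lem1} as the structural input. Your computation of the left side, $\Hom_{\Pi}(e\Pi(i)_{\leq 0}, f\Pi(j)_{\leq 0})_0 \simeq f\Pi_{j-i}e$ for $i \leq j$ and $0$ otherwise, is correct and is essentially the same degree argument the paper uses for $\Hom_{\Pi}(\underline{T}^i,\underline{T}^j)_0$.

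However, you explicitly leave the crucial identification $f\Pi_{l-k}e \xrightarrow{\sim} \underline{\Hom}_{\Lambda}(e\Pi_k, f\Pi_l)$ as an acknowledged gap, pointing only vaguely at ``the appropriate equivalence'' and $\Pi_{l-k}\otimes_{\Lambda}\Pi_k \simeq \Pi_l$. The paper fills exactly this gap with a short, transparent chain: group the idempotents into $\underline{T}^i = ((e_i+\cdots+e_\ell)\Pi(i-1))_{\leq 0}$, observe that $\underline{T}^i$ is generated in its lowest nonzero degree $-i+1$, where $(\underline{T}^i)_{-i+1}$ is the \emph{projective} $\Lambda$-module $(e_i+\cdots+e_\ell)\Lambda$, so that $\Hom_{\Pi}(\underline{T}^i,\underline{T}^j)_0 \simeq \Hom_{\Pi_0}((\underline{T}^i)_{-i+1},(\underline{T}^j)_{-i+1})$. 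Applying formula (B) this becomes $\Hom_{\Lambda}((e_i+\cdots+e_\ell)\Lambda, \tau_n^{-(j-i)}((e_j+\cdots+e_\ell)\Lambda))$, and then one applies the equivalence $\tau_n^{-i}$ of Proposition \ref{app_prop1}(1) to both arguments and uses \eqref{A} to rewrite this as $\Hom_{\Lambda}(\tau_n^{-i}(\Lambda), \tau_n^{-j}(\Lambda))$. Note in particular that the vanishing for $i > j$ falls out for free from the grading on the $\Pi$-side (which you also observe), so the paper never needs to invoke a separate ``no backwards maps'' fact on the $\Lambda$-side; it computes only one side and transports it. Your version computes both sides independently and then asks for a matching, which is what makes the step look like an obstacle. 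Using the equivalence of Proposition \ref{app_prop1} to transport a Hom between projectives to a Hom between their $\tau_n^{-}$-shifts is the missing one-line argument; once you have it, the rest of your outline is sound and matches the paper's final matrix assembly.
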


Applying Theorem \ref{equ1}, we immediately obtain the following result given in \cite[Theorem 4.7]{IO2}.

\begin{cor} \cite[Theorem 4.7]{IO2}
Let $\Lambda$ be an $n$-representation finite algebra.
Then there exists a triangle-equivalence
\[
\underline{\mathrm{mod}}^{\mathbb{Z}}\Pi \simeq \mathsf{D}^{\mathrm{b}}(\mod \underline{\End}_{\Lambda}(\Pi)).
\]
\end{cor}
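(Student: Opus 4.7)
The plan is to describe both algebras $\underline{\End}_{\Pi}(T)_0$ and $\underline{\End}_{\Lambda}(\Pi)$ explicitly as block-matrix algebras whose entries are the graded pieces $\Pi_{j-i}$ of $\Pi$, with multiplication induced by that of $\Pi$, and then to match them.

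For the first algebra, I would begin by applying Proposition~\ref{cal_end}(4) to replace $\underline{\End}_{\Pi}(T)_0$ by $\End_{\Pi}(\underline{T})_0$. Since $\Pi$ is graded self-injective (Proposition~\ref{pp_selfinj}) and finite dimensional, it has a finite Gorenstein parameter $\ell$, so $\Pi(i)_{\leq 0}$ is already projective in $\mod^{\mathbb{Z}}\Pi$ for $i \geq \ell$, and only the range $0 \leq i < \ell$ contributes non-projective indecomposables to $\underline{T}$. A direct Hom-calculation using the universal property of the truncation $\Pi(i)\twoheadrightarrow \Pi(i)_{\leq 0}$ yields
\[
\Hom_{\Pi}(\Pi(i)_{\leq 0},\Pi(j)_{\leq 0})_0 \cong \begin{cases} \Pi_{j-i} & (j \geq i), \\ 0 & (j < i), \end{cases}
\]
the isomorphism sending a morphism $f$ to $f(\bar 1)\in\Pi(j)_{-i}=\Pi_{j-i}$ (the compatibility condition $f(\bar 1)\cdot \Pi_{\geq i+1}=0$ is automatic by degree). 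Combined with Proposition~\ref{cal_end}(3), this identifies $\End_{\Pi}(\underline{T})_0$ with an explicit upper-triangular-type matrix algebra with entries $\Pi_{j-i}$ and multiplication induced from that of $\Pi$.

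For the second algebra, I would use the structural fact that for $n$-representation-finite $\Lambda$ the graded pieces $\Pi_i$ lie in the $n$-cluster tilting subcategory $\add M\subseteq \mod\Lambda$ and are obtained from $\Lambda$ by iterating the higher Auslander--Reiten translation $\tau_n^{-1}$. Left multiplication in $\Pi$ induces a natural homomorphism $\Pi_{j-i}\to\Hom_{\Lambda}(\Pi_i,\Pi_j)$ which, after passage to the stable category $\underline{\mod}\Lambda$, becomes an isomorphism $\Pi_{j-i}\xrightarrow{\sim}\underline{\Hom}_{\Lambda}(\Pi_i,\Pi_j)$ in exactly the same range $0 \leq i \leq j <\ell$ and vanishes otherwise; the direct summand $\Pi_0=\Lambda$ of $\Pi$ is projective and contributes trivially. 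Since both descriptions arise from multiplication in $\Pi$, the two algebras are isomorphic, which is the claim.

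The main obstacle is the stable-Hom identification $\underline{\Hom}_{\Lambda}(\Pi_i,\Pi_j)\cong\Pi_{j-i}$. This rests on the higher Auslander--Reiten theory of $\add M \subseteq \mod\Lambda$, both to pin down which $\Lambda$-linear maps between the modules $\Pi_i$ factor through projective $\Lambda$-modules and to identify the Gorenstein parameter $\ell$ of $\Pi$ with the length of the $\tau_n^{-1}$-orbit of $\Lambda$ in $\add M$, so that the two sides are indexed over the same range and the matching via multiplication in $\Pi$ is an algebra isomorphism.
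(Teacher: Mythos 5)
The claim that both $\End_{\Pi}(\underline{T})_0$ and $\underline{\End}_{\Lambda}(\Pi)$ are ``upper-triangular matrix algebras with entries $\Pi_{j-i}$'' is where your argument breaks down, and the gap is genuine, not cosmetic.

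First, the higher preprojective algebra $\Pi$ of an $n$-representation-finite algebra does \emph{not} have a Gorenstein parameter in the sense of Definition~\ref{df_Gor_par}: different indecomposable projectives $e\Pi$ have their socles in different degrees (indexed by the integer $i$ with $e\in\mathrm{PI}_i(\Lambda)$, i.e.\ the length of the $\tau_n^-$-orbit of $e\Lambda$). Consequently, for $0\le i<\ell$ the truncation $\Pi(i)_{\le 0}$ already contains graded-projective direct summands, and by Proposition~\ref{cal_end} these must be stripped off before forming $\Gamma=\End_{\Pi}(\underline{T})_0$. What remains is not the full matrix algebra $(\Pi_{j-i})_{0\le i\le j<\ell}$, but only the blocks cut out by the idempotents $e_i,\dots,e_\ell$. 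Second, and independently, the isomorphism $\Pi_{j-i}\xrightarrow{\sim}\underline{\Hom}_{\Lambda}(\Pi_i,\Pi_j)$ is false even for $i,j\ge 1$. Take $n=1$, $\Lambda=K(1\to 2)$ (type $A_2$), so $\Pi$ is the preprojective algebra with $\Pi_0=\Lambda$ three-dimensional and $\Pi_1=\tau^{-1}(\Lambda)=S_1$ one-dimensional. Here $\Soc\Pi$ lies in degrees $0$ and $1$ (no Gorenstein parameter), and $\underline{\Hom}_{\Lambda}(\Pi_1,\Pi_1)=\underline{\End}_{\Lambda}(S_1)=K$ is one-dimensional, so the left-multiplication map $\Pi_0\to\underline{\Hom}_{\Lambda}(\Pi_1,\Pi_1)$ is a surjection $K^3\to K$ and certainly not an isomorphism; on the other side $\Gamma=\End_{\Pi}(\underline{T})_0=K$ (the only indecomposable non-projective summand of $T$ is the simple $S_2$ in degree $0$), not the $7$-dimensional algebra $\begin{pmatrix}\Pi_0&\Pi_1\\0&\Pi_0\end{pmatrix}$.

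The paper's Theorem~\ref{IOProp2} instead decomposes $\underline{T}=\bigoplus_{i=1}^{\ell}\underline{T}^i$ with $\underline{T}^i=((e_i+\cdots+e_\ell)\Pi(i-1))_{\le 0}$, uses that each $\underline{T}^i$ is generated in its bottom degree $1-i$, and identifies $\Hom_{\Pi}(\underline{T}^i,\underline{T}^j)_0$ with $\Hom_{\Lambda}(\tau_n^{-i}(\Lambda),\tau_n^{-j}(\Lambda))$ by restricting to that bottom degree and then applying the equivalence from Proposition~\ref{app_prop1}(1) repeatedly. The correct matrix entries are the $\Lambda$-Hom-spaces $\Hom_{\Lambda}(\tau_n^{-i}(\Lambda),\tau_n^{-j}(\Lambda))$, not the graded pieces $\Pi_{j-i}$; these agree only in the special case where $\Pi$ does have a uniform Gorenstein parameter. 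Your outline would need to be reworked around this refined decomposition and the $\tau_n^-$-equivalence, which is essentially the content of the paper's proof.
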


In the rest of this section, we prove Theorem \ref{IOProp2}.
First we recall basic results in higher Auslander-Reiten theory.
A key role is played by the following functors.

\begin{df}\cite{Iya1}
Let $\Lambda$ be an algebra with global dimension at most $n$.
Then the functors
\[
\tau_n:=D\Ext^n_{\Lambda}(-,\Lambda): \mathrm{mod}\Lambda \longrightarrow \mathrm{mod}\Lambda
\]
and
\[
\tau^{-}_n:=\Ext^n_{\Lambda}(D\Lambda,-): \mathrm{mod}\Lambda \longrightarrow \mathrm{mod}\Lambda
\]
are called the \emph{$n$-Auslander-Reiten translations}.
\end{df}

The restrictions of $n$-Auslander-Reiten translations to $n$-cluster tilting modules give a higher analogue of Auslander-Reiten translations.

\begin{prop}\cite[Theorem 2.3]{Iya1} \cite[Proposition 1.3, Theorem 1.6]{Iya2}\label{app_prop1}
Let $\Lambda$ be an $n$-representation finite algebra, and $M$ an $n$-cluster tilting $\Lambda$-module. 
We denote by $M_P$ (respectively, $M_I$) a maximal projective (respectively, injective) direct summand of $M$.
Then the following assertions hold.
\begin{enumerate}
\def\labelenumi{(\theenumi)} 
\item $\tau_n$ induces an equivalence $\add(M/M_P) \rightarrow \add(M/M_I)$, and $\tau^{-}_n$ induces its inverse.
\item For any indecomposable projective $\Lambda$-module $P$, there exists $i \in \mathbb{N}$ such that $\tau_n^{-i}(P)$ is an indecomposable injective $\Lambda$-module.
\item We have $\add M = \add \bigoplus_{i\geq 0}\tau^{-i}_n(\Lambda)$.
\end{enumerate}
\end{prop}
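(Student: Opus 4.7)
The plan is to derive all three assertions from a higher version of the Auslander-Reiten formula, the basic computational input of $n$-Auslander-Reiten theory. First I would establish $n$-Auslander-Reiten duality: for $X, Y \in \mod \Lambda$ with $X$ having no non-zero projective direct summand, there is a natural isomorphism
\[
D\underline{\Hom}_\Lambda(X, Y) \simeq \overline{\Hom}_\Lambda(Y, \tau_n X),
\]
where $\underline{\Hom}$ (respectively, $\overline{\Hom}$) denotes morphisms modulo those factoring through projectives (respectively, injectives), together with a dual statement for $\tau_n^{-}$. Since $\Lambda$ has global dimension at most $n$, Theorem \ref{Serre=gldim} yields a triangle-equivalence $\mathsf{K}^{\mathrm{b}}(\proj \Lambda) \simeq \mathsf{D}^{\mathrm{b}}(\mod \Lambda)$ with Serre functor $-\Lten_\Lambda D\Lambda$. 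Computing $H^0$ of the Serre duality isomorphism shifted by $[-n]$ and identifying $\tau_n$ with $H^0((-\Lten_\Lambda D\Lambda)[-n])$ on modules without projective summands produces the duality; this step does not yet use the cluster tilting hypothesis.

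For (1), I would combine the duality with the orthogonality $\Ext^i_\Lambda(M, M) = 0$ for $1 \leq i \leq n-1$. For an indecomposable non-projective summand $N$ of $M$, the computation of $\Ext^i_\Lambda(M, \tau_n N)$ and $\Ext^i_\Lambda(\tau_n N, M)$ via projective resolutions together with the Serre duality above reduces the desired vanishing in the range $1 \leq i \leq n-1$ to the hypothesized vanishing of $\Ext^j_\Lambda(M, M)$. Thus $\tau_n N$ lies in the subcategory characterizing $\add M$ in Definition \ref{df_rep_fin}(2), so $\tau_n N \in \add M$; it is non-injective because the dual duality gives $\tau_n^{-} \tau_n N \simeq N \neq 0$. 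Symmetrically $\tau_n^{-}$ maps non-injective summands of $M$ to non-projective summands, and $(\tau_n, \tau_n^{-})$ provides mutually quasi-inverse equivalences between $\add(M/M_P)$ and $\add(M/M_I)$.

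For (2) and (3), the finiteness of the number of indecomposable summands of $M$ is decisive. Starting from an indecomposable projective $P$, which lies in $\add M$ because $\Lambda \in \add M$, iteration of $\tau_n^{-}$ stays inside $\add M$ by (1) as long as the previous term is non-injective; since $\add M$ has only finitely many indecomposables, the orbit must terminate at an injective module, proving (2). Conversely, for any indecomposable summand $N$ of $M$, iterating $\tau_n$ and using (1) produces indecomposable summands of $M$ that eventually reach a projective $P$, yielding $N \simeq \tau_n^{-i}(P)$ for some $i \geq 0$; this gives (3).

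The main obstacle is the first step above, specifically the verification that $\tau_n$ maps $\add(M/M_P)$ into $\add M$. It rests on combining higher Auslander-Reiten duality with the precise $\Ext$-vanishing conditions defining an $n$-cluster tilting module, and is where the full strength of $n$-representation finiteness is really needed; once that closure property is in hand, the equivalence in (1) and the orbit arguments for (2) and (3) follow by essentially formal finiteness considerations.
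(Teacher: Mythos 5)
The paper does not prove this proposition; it is cited directly from \cite{Iya1} and \cite{Iya2} and used as an input, so there is no in-paper argument for you to be compared against. Your overall plan---Serre duality on $\mathsf{D}^{\mathrm{b}}(\mod\Lambda)$ combined with the cluster tilting orthogonality, followed by finiteness/orbit arguments for (2) and (3)---has the right shape and is in the spirit of Iyama's proofs. However, the form of $n$-Auslander--Reiten duality you write down is wrong. The classical formula is $D\underline{\Hom}_\Lambda(X,Y)\cong\Ext^1_\Lambda(Y,\tau X)$ (while $D\Ext^1_\Lambda(X,Y)\cong\overline{\Hom}_\Lambda(Y,\tau X)$), and the higher analogue that Serre duality actually yields, for $X,Y$ in an $n$-cluster tilting subcategory, is $D\underline{\Hom}_\Lambda(X,Y)\cong\Ext^n_\Lambda(Y,\tau_n X)$, not $\overline{\Hom}_\Lambda(Y,\tau_n X)$; these right-hand sides already disagree for $n=1$, so this is a substantive mis-statement of the main computational tool, not a cosmetic slip.

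There is also a genuine gap hiding in the step you yourself flag as the main obstacle. Writing $\nu=-\Lten_\Lambda D\Lambda$ and $\nu_n=\nu[-n]$, for an indecomposable non-projective $N\in\add M$ the complex $\nu_n N$ has $H^0(\nu_n N)\cong\tau_n N$ and $H^n(\nu_n N)\cong D\Hom_\Lambda(N,\Lambda)$; the cluster tilting condition $\Ext^i_\Lambda(N,\Lambda)=0$ for $1\le i\le n-1$ only kills the intermediate cohomologies. To run the Serre duality computation that reduces $\Ext^i(M,\tau_n N)$ to $\Ext^{n-i}(N,M)$ one needs $\nu_n N$ to be the stalk $\tau_n N$ in degree $0$, i.e.\ the extra vanishing $\Hom_\Lambda(N,\Lambda)=0$. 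For $n=1$ (hereditary $\Lambda$) this is elementary, but for $n\ge 2$ it is not a formal consequence of $\gldim\Lambda\le n$ plus $\Ext$-orthogonality, and it is one of the places where $n$-representation finiteness is genuinely used; your reduction to ``the hypothesized vanishing of $\Ext^j(M,M)$'' does not account for this degree-$0$ term. Relatedly, the claim that your duality step ``does not yet use the cluster tilting hypothesis'' is misleading: Serre duality on $\mathsf{D}^{\mathrm{b}}$ is indeed free, but extracting a statement about the module $\tau_n N$ already requires the vanishings just discussed. Granting (1), the orbit/finiteness arguments you sketch for (2) and (3) are sound (with the small point that termination of the $\tau_n^{-}$-orbit of $P$ at an injective is seen by noting that otherwise two iterates would coincide, forcing the projective $P$ into the non-projective part via $\tau_n$, a contradiction).
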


The $(n+1)$-preprojective algebra $\Pi$ of an $n$-representation finite algebra $\Lambda$ can be described in terms of $n$-Auslander-Reiten translation.

\begin{lem} \cite[Lemma 2.13]{IO2} \label{app_lem1}
Let $\Lambda$ be an $n$-representation finite algebra. 
For any $i \in \mathbb{N}$, there is an isomorphism 
\[
\tau_n^{-i}(\Lambda) \simeq \Pi_i
\]
of $\Lambda$-modules. In particular, $\Pi$ is an $n$-cluster tilting $\Lambda$-module (Proposition \ref{app_prop1}).
\end{lem}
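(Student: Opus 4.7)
The plan is induction on $i$. The cases $i = 0$ (trivially $\Pi_0 = \Lambda$) and $i = 1$ (by definition $\Pi_1 = \Ext^n_\Lambda(D\Lambda,\Lambda) = \tau_n^-(\Lambda)$) are immediate.

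For the inductive step, assuming $\tau_n^{-i}(\Lambda) \simeq \Pi_i$, I would compute
\[
\tau_n^{-(i+1)}(\Lambda) = \tau_n^{-}(\tau_n^{-i}(\Lambda)) \simeq \tau_n^{-}(\Pi_i) = \Ext^n_\Lambda(D\Lambda, \Pi_i)
\]
and compare with $\Pi_{i+1} = \Ext^n_\Lambda(D\Lambda,\Lambda) \otimes_\Lambda \Pi_i$ coming from the tensor algebra structure. The heart of the argument is therefore to produce a natural isomorphism
\[
\Ext^n_\Lambda(D\Lambda, \Pi_i) \simeq \Ext^n_\Lambda(D\Lambda, \Lambda) \otimes_\Lambda \Pi_i.
\]

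I would establish this in the derived category. Since $\gldim \Lambda \leq n$, the derived Nakayama functor $\nu^{-1} = \RHom_\Lambda(D\Lambda, -)$ is an autoequivalence of $\mathsf{D}^{\mathrm{b}}(\mod \Lambda)$, and as such it is represented by the bimodule complex $\nu^{-1}(\Lambda)$; that is, $\nu^{-1}(X) \simeq X \Lten_\Lambda \nu^{-1}(\Lambda)$. Under the inductive hypothesis, Proposition \ref{app_prop1}(3) places all $\Pi_j$ for $j \leq i$ inside $\add M$ for an $n$-cluster tilting $M$, and $D\Lambda$ is itself a summand of $M$. The Ext-vanishing built into the definition of an $n$-cluster tilting module then gives $\Ext^j_\Lambda(D\Lambda, \Pi_i) = 0$ for $1 \leq j \leq n-1$, and dually a Tor-vanishing statement between $\Pi_i$ and $\nu^{-1}(\Lambda)$ in the relevant intermediate degrees. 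These vanishings collapse the derived tensor product in the single relevant cohomological degree, and taking $H^n$ of both sides of the Nakayama identity yields the required module isomorphism.

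Once this is in place, the final assertion of the lemma is essentially free: Proposition \ref{app_prop1}(3) identifies the $n$-cluster tilting module as $M = \bigoplus_{i \geq 0} \tau_n^{-i}(\Lambda)$, and the established isomorphisms give $M \simeq \bigoplus_{i \geq 0} \Pi_i = \Pi$, so $\Pi$ is an $n$-cluster tilting $\Lambda$-module. The principal obstacle is the careful bookkeeping in the derived-category comparison: $\nu^{-1}(\Lambda)$ need not be concentrated in a single cohomological degree, because projective-injective summands of $\Lambda$ contribute nonzero $\Hom_\Lambda(D\Lambda, \Lambda)$, so one either runs a spectral sequence or, more hands-on, uses a bimodule projective resolution of $D\Lambda$ over $\Lambda^{\op} \otimes_K \Lambda$ together with the cluster-tilting Ext-vanishing to extract only the degree-$n$ piece.
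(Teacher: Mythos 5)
The paper does not actually prove this lemma; it cites it to \cite[Lemma 2.13]{IO2}, so there is no in-paper argument to compare against. Evaluating your proposal on its own: the overall plan (induction on $i$, reduce to a ``base change'' isomorphism for $\tau_n^-$, pass through the derived Nakayama functor) is sound, and the final step deducing that $\Pi$ is $n$-cluster tilting via Proposition~\ref{app_prop1}(3) is correct.

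However, your justification of the key isomorphism $\Ext^n_\Lambda(D\Lambda,\Pi_i)\simeq\Pi_i\otimes_\Lambda\Ext^n_\Lambda(D\Lambda,\Lambda)$ misattributes what makes it work. No cluster-tilting Ext-vanishing (and no ``dual Tor-vanishing statement,'' which is not a standard consequence of cluster tilting and which you leave unspecified) is needed; the isomorphism in fact holds for \emph{every} right $\Lambda$-module $X$, purely because $\gldim\Lambda\le n$. Concretely, take a projective resolution $0\to P^{-n}\to\cdots\to P^0\to D\Lambda\to 0$ of finitely generated right $\Lambda$-modules (length $\le n$ since $\mathrm{pd}_\Lambda D\Lambda\le n$), and set $C^\bullet:=\Hom_\Lambda(P^\bullet,\Lambda)$, a complex of finitely generated projective, hence flat, left $\Lambda$-modules concentrated in degrees $0,\dots,n$. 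Then $\RHom_\Lambda(D\Lambda,X)\simeq X\otimes_\Lambda C^\bullet$ since each $P^j$ is finitely generated projective, and because $C^\bullet$ vanishes in degrees $>n$ and $X\otimes_\Lambda-$ is right exact, one gets $H^n(X\otimes_\Lambda C^\bullet)=X\otimes_\Lambda H^n(C^\bullet)=X\otimes_\Lambda\Pi_1$ directly, with no spectral sequence needed. (If one insists on a bimodule resolution of $D\Lambda$ to make the right $\Lambda$-structures manifest, the resolution may have length up to $2n$, and then the hyper-Tor spectral sequence degenerates at the $(0,n)$-spot simply because $\Ext^j_\Lambda(D\Lambda,\Lambda)=0$ for $j>n$ and $\Tor_{<0}=0$ --- again, the global-dimension bound alone suffices.) Two smaller remarks: the tensor factor should be on the side matching the tensor-algebra multiplication, $\Pi_{i+1}\simeq\Pi_i\otimes_\Lambda\Pi_1$, rather than $\Pi_1\otimes_\Lambda\Pi_i$; and your argument, once corrected, actually shows $\tau_n^{-i}(\Lambda)\simeq\Pi_i$ for any algebra of global dimension at most $n$, with $n$-representation finiteness entering only in the final sentence about $\Pi$ being $n$-cluster tilting.
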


Now we are ready to prove Theorem \ref{IOProp2}.

\begin{proof}[Proof of Theorem \ref{IOProp2}]
First we describe $\underline{T}$ in Proposition \ref{cal_end} explicitly.
Let $\mathrm{PI}(\Lambda)$ be a complete set of orthogonal primitive idempotents of $\Lambda$. 
By Proposition \ref{app_prop1} and Lemma \ref{app_lem1}, 
we can decompose $\mathrm{PI}(\Lambda)=\amalg_{i=0}^{\ell}\mathrm{PI}_i(\Lambda)$ where
\[
\mathrm{PI}_i(\Lambda):=\{ e \in \mathrm{PI}(\Lambda) \ | \ \tau^{-i}_n(e\Lambda) \neq 0, \  \tau^{-(i+1)}_n(e\Lambda)=0 \}.
\]
Let 
\[
e_i:=\sum_{e \in \mathrm{PI}_i(\Lambda)}e.
\]
Then by the definition of $\mathrm{PI}_i(\Lambda)$, we have 
\begin{eqnarray}
\tau^{-i}_n(\Lambda)=\tau^{-i}_n((e_i+\cdots+e_{\ell})\Lambda). \label{A}
\end{eqnarray}
We remark that $\mathrm{PI}(\Lambda)$ can be regarded as a complete set of orthogonal primitive idempotents of $\Pi$.

We define $\underline{T}^i$ by putting
\[
\underline{T}^i := ((e_i+e_{i+1}+\cdots+e_{\ell}) \Pi(i-1))_{\leq 0}.
\]
Then $\underline{T}$ defined in Proposition \ref{cal_end} can be described as 
\[
\underline{T}=\bigoplus_{i=1}^{\ell}\underline{T}^i,
\]
and we have $\underline{\End}_{\Pi}(T)_0 \simeq \End_{\Pi}(\underline{T})_0$.

Next we show that $\End_{\Pi}(\underline{T})_0 \simeq \underline{\End}_{\Lambda}(\Pi)$.
By Lemma \ref{app_lem1}, we have
\[
e \Pi =  e\Lambda \oplus \tau^{-}_n(e\Lambda) \oplus \tau^{-2}_n(e\Lambda) \oplus \cdots \oplus \tau^{-i}_n(e\Lambda)
\]
as a $\Lambda$-module for any $e \in \mathrm{PI}_i(\Lambda)$.
Since one can see that
\begin{eqnarray}
(\underline{T}^i)_k = 
\begin{cases}
\tau^{-(k+i-1)}_n((e_i +\cdots+e_{\ell})\Lambda) & (-i+1 \leq k \leq 0)\\
0 & (\mbox{otherwise})
\end{cases}  \label{B}
\end{eqnarray}
for $1 \leq i \leq \ell$, there exist isomorphisms
\begin{eqnarray*}
\Hom_{\Pi}(\underline{T}^i,\underline{T}^j)_0 
&\simeq& \Hom_{\Pi_0}((\underline{T}^i)_{-i+1},(\underline{T}^j)_{-i+1}) \\
& \stackrel{\eqref{B}}{\simeq} & \Hom_{\Lambda}((e_i +\cdots+e_{\ell})\Lambda,\tau_n^{-(j-i)}((e_j +\cdots+e_{\ell})\Lambda)) \\
& \stackrel{\mbox{Prop \ref{app_prop1}}}{\simeq} &
 \Hom_{\Lambda}(\tau_n^{-i}((e_i +\cdots+e_{\ell})\Lambda),\tau_n^{-j}((e_j +\cdots+e_{\ell})\Lambda)) \\
& \stackrel{\eqref{A}}{\simeq}& \Hom_{\Lambda}(\tau_n^{-i}(\Lambda),\tau_n^{-j}(\Lambda))
\end{eqnarray*}
for $i \leq j$, and we have $\Hom_{\Pi}(\underline{T}^i,\underline{T}^j)_0 \simeq \Hom_{\Pi_0}((\underline{T}^i)_{-i+1},(\underline{T}^j)_{-i+1})=0$ for $i > j$.

Thus we have algebra isomorphisms
\begin{eqnarray*}
\End_{\Pi}(\underline{T})_0 & \simeq & \left( \begin{array}{cccc}
\Hom_{\Pi}(\underline{T}^1,\underline{T}^1)_0  & &  & 0 \\
\Hom_{\Pi}(\underline{T}^1,\underline{T}^2)_0  & \Hom_{\Pi}(\underline{T}^2,\underline{T}^2)_0  &  &  \\
\vdots  & \vdots  & \ddots &  \\
\Hom_{\Pi}(\underline{T}^1,\underline{T}^{\ell})_0  & \Hom_{\Pi}(\underline{T}^2,\underline{T}^{\ell})_0  & \cdots & \Hom_{\Pi}(\underline{T}^{\ell},\underline{T}^{\ell})_0 
\end{array} \right) \\ 
& \simeq & \left( \begin{array}{cccc}
\Hom_{\Lambda}(\tau_n^{-1}(\Lambda),\tau_n^{-1}(\Lambda))  &   &  & 0 \\
\Hom_{\Lambda}(\tau_n^{-1}(\Lambda),\tau_n^{-2}(\Lambda))  & \Hom_{\Lambda}(\tau_n^{-2}(\Lambda),\tau_n^{-2}(\Lambda)) & & \\
\vdots  & \vdots & \ddots &  \\
\Hom_{\Lambda}(\tau_n^{-1}(\Lambda),\tau_n^{-\ell}(\Lambda))  & \Hom_{\Lambda}(\tau_n^{-2}(\Lambda),\tau_n^{-\ell}(\Lambda))  & \cdots & \Hom_{\Lambda}(\tau_n^{-\ell}(\Lambda),\tau_n^{-\ell}(\Lambda))
\end{array} \right) \\ 
&=& \End_{\Lambda}\left(\bigoplus_{i \geq 1} \tau_n^{-i}(\Lambda) \right) = \underline{\End}_{\Lambda}(\Pi).
\end{eqnarray*}
\end{proof}

\subsection{Classical cases}
In this subsection, we consider the case $n=1$.
Then $\Lambda$ is the path algebra of a Dynkin quiver $Q$ (see Example \ref{ex_pp}), and $\Pi$ is the classical preprojective algebra associated to the quiver $Q$ (see \cite{BGL,Rin}).
Then $\Pi$ is a positively graded self-injective algebra by Proposition \ref{pp_selfinj}, 
and $\Pi$ is $1$-cluster tilting as a $\Lambda$-module (i.e. $\add \Pi = \mod\Lambda$) by Proposition \ref{app_prop1} and Lemma \ref{app_lem1}. 
By Theorem \ref{IOProp2}, there exists a triangle-equivalence
\[
\underline{\mod}^{\mathbb{Z}}\Pi \simeq \mathsf{D}^{\mathrm{b}}(\mod \underline{\End}_{\Lambda}(\Pi)).
\]

Now we recall a description of $\Pi$ in terms of quivers with relations. 
\begin{list}{$\bullet$}{}
\item Define the double quiver $\overline{Q}$ of $Q$ by
\[
\overline{Q}_0:=Q_0,
\]
\[
\overline{Q}_1:=Q_1 \bigsqcup \left\{  j \xrightarrow{\alpha^*} i \ | \ i \xrightarrow{\alpha} j \in Q_1 \right\}.
\]
Then we have an involution $( \ \ )^*:  \overline{Q}_1\longrightarrow \overline{Q}_1$ defined by
\[
\alpha^* := \begin{cases}
\alpha^* & (\alpha \in Q_1), \\
\beta & (\alpha=\beta^*  \mbox{ for some } \beta \in Q_1).
\end{cases}
\]
\item Define relations $\rho_i$ for each $i \in \overline{Q}_0 $ by
\[
\rho_i := \sum_{\left(i \xrightarrow{\alpha} j\right) \in \overline{Q}_1} \epsilon_{\alpha}  \alpha \alpha^*,
\]
\[
\epsilon_{\alpha} := \begin{cases}
1 & (\alpha \in Q_1), \\
-1 & (\alpha^* \in Q_1).
\end{cases}
\]
\item Define the grading on the quiver $\overline{Q}$ by 
\[
\deg \alpha := \begin{cases}
0 & (\alpha \in Q_1),  \\
1 &  (\alpha^* \in Q_1). \\
\end{cases}
\]
\end{list}

Since the relations $\rho_i$ are homogeneous of degree $1$ with respect to the above grading on $\overline{Q}$, 
the above grading on $\overline{Q}$ gives rise to the grading on $K\overline{Q}/\langle \rho_i \ | \ i \in \overline{Q}_0 \rangle$. 
Then there exists an isomorphism of graded algebras
\[
\Pi \simeq K\overline{Q}/\langle \rho_i \ | \ i \in \overline{Q}_0 \rangle.
\]

Let us give a concrete example. 
Let $Q$ be the quiver 
\[
\xymatrix{
1 \ar[r]^{\alpha_1} & 2 \ar[r]^{\alpha_2} & \cdots \cdots \ar[r]^{\alpha_{n-2}} & n-1 \ar[r]^{\alpha_{n-1}} & n
}
\]
of type $A_n$, and $\Lambda$ the path algebra of $Q$.
Then $\Pi$ and $\underline{\End}_{\Lambda}(\Pi)$ are described by quivers with relations as follows.

First the quiver with relations of $\Pi$ is given by
\[
\xymatrix{
1 \ar@<0.5ex>[r]^{\alpha_1} & 2 \ar@<0.5ex>[r]^{\alpha_2} \ar@<0.5ex>[l]^{\alpha^*_1} & \cdots \cdots \ar@<0.5ex>[r]^{\alpha_{n-2}} \ar@<0.5ex>[l]^{\alpha^*_2} & n-1 \ar@<0.5ex>[r]^{\alpha_{n-1}} \ar@<0.5ex>[l]^{\alpha^*_{n-2}}& n \ar@<0.5ex>[l]^{\alpha^*_{n-1}}
}
\hspace{0.4cm}
\begin{cases}
\alpha_1\alpha^*_1 = 0 =\alpha^*_{n-1}\alpha_{n-1}, & \\
\alpha_i \alpha^*_i = \alpha^*_{i-1}  \alpha_{i-1} & (2 \leq i \leq n-1).
\end{cases}
\]
The grading on $\Pi$ is given by $\deg \alpha_i :=0$ and $\deg \alpha^*_i:=1$.

Next we draw the quiver with relations of $\underline{\End}_{\Lambda}(\Pi)$.
The Auslander-Reiten quiver of $\mod \Lambda=\add \Pi$ is given by
\[
\xymatrix@C0.1cm@R0.2cm{
& & & & & & & & & & & \circ \ar[rd] & & & & & & & & & & & \\
& & & & & & & & & & \circ \ar[rd] \ar[ru]& & \bullet \ar[rd]& & & & & & & & &  & \\
& & & & & & & & & \circ \ar[rd]\ar[ru]& & \bullet \ar[rd] \ar[ru]& & \bullet \ar[rd]& & & & & & & && \\
& & & & & & & & \circ \ar[rd] \ar[ru]& & \bullet \ar[rd]\ar[ru]& & \bullet \ar[rd] \ar[ru]& &\bullet\ar[rd] & & & & & & & &  \\
& & & & & & &  \circ \ar[ru] & & \bullet \ar[ru] & & \bullet \ar[ru] \ar@{.}[rrrddd] & & \bullet \ar[ru]  & & \bullet\ar@{.}[rrrddd]  & & & & & & & \\
& & & & & &  & & & & & & & & & & & & & & & & \\
& & & & &  & & & & & & & & & & & & & & & & &  \\
& &  & & \circ \ar[rd] \ar@{.}[rrruuu] & & \bullet \ar[rd] & & \bullet \ar@{.}[rrruuu] & & & & & & \bullet  \ar[rd] & & \bullet \ar[rd] & & \bullet \ar[rd] & & & & \\
&  & & \circ \ar[ru] \ar[rd] & & \bullet \ar[ru] \ar[rd] & & \bullet \ar[ru]\ar[rd] & & & & & & & & \bullet \ar[ru]\ar[rd] & & \bullet\ar[rd]  \ar[ru]& & \bullet \ar[rd] & & & \\
& & \circ \ar[ru] \ar[rd] & & \bullet \ar[ru] \ar[rd] & & \bullet\ar[ru] \ar[rd] & & \bullet & \ar@{.}[rrrr]& & & & &\bullet\ar[ru] \ar[rd] & &\bullet \ar[ru]\ar[rd] & & \bullet \ar[ru]\ar[rd] & & \bullet \ar[rd] & & \\
& \circ\ar[ru] \ar[rd]  & & \bullet \ar[ru] \ar[rd] & & \bullet \ar[ru] \ar[rd] & & \bullet \ar[ru] \ar[rd]& & & & & & & & \bullet\ar[ru]\ar[rd] & & \bullet\ar[ru]\ar[rd] & & \bullet\ar[ru]\ar[rd] & & \bullet \ar[rd] & \\
\circ \ar[ru] & & \bullet \ar[ru] & & \bullet \ar[ru]  & & \bullet \ar[ru]  & & \bullet & & & & & &\bullet \ar[ru]& &\bullet \ar[ru]& &\bullet \ar[ru]& & \bullet \ar[ru]& & \bullet 
}
\]
where the number of vertices $\circ$ is $n$. 
These vertices $\circ$ correspond to indecomposable projective $\Lambda$-modules.
Thus $\underline{\End}_{\Lambda}(\Pi)$ is given by the full subquiver consisting of vertices $\bullet$ with mesh relations.

\section{Derived-orbit categories}
\label{DG_section}

In this section, we collect basic facts on DG categories, together with an application to orbit categories of derived categories.
In particular, we give the definition of derived-orbit categories and their universal property.

In \cite{Ke1}, B. Keller introduced the derived category of differential graded category (DG category),  and developed fundamental theory. 
It gives a powerful tool for study of algebraic triangulated categories which are often realized as derived categories of DG categories. 

One of their application is to construct derive-orbit categories \cite{Ke2}. 
They are triangulated categories associated with orbit categories of derived categories, which do not have a structure of triangulated categories in general. 
In their construction, we need to introduce DG orbit categories.

\subsection{DG categories}
In this subsection, we recall basic definitions and typical examples of DG categories.
We refer to \cite{Ke1} for the details.

\begin{df}
An additive category $\mathscr{A}$ is called a \emph{differential graded (DG) category} if the following conditions are satisfied.
\begin{enumerate}
\def\labelenumi{(\theenumi)} 
\item For any $X,Y \in \mathscr{A}$, the morphism set $\Hom_{\mathscr{A}}(X,Y)$ is a $\mathbb{Z}$-graded abelian group
\[
\Hom_{\mathscr{A}}(X,Y)= \bigoplus_{i \in \mathbb{Z}}\Hom_{\mathscr{A}}^i(X,Y).
\]
such that $gf \in \Hom_{\mathscr{A}}^{i+j}(X,Z)$ for any $f\in \Hom_{\mathscr{A}}^i(X,Y)$ and $g \in \Hom_{\mathscr{A}}^j(Y,Z)$ where $X,Y,Z \in \mathscr{A}$ and $i,j \in \mathbb{Z}$.
\item The morphism set are endowed with a differential $d$ such that the equation
\[
d(gf)=(dg)f+(-1)^jg(df)
\]
holds for any $f\in \Hom_{\mathscr{A}}(X,Y)$ and $g \in \Hom_{\mathscr{A}}^j(Y,Z)$ where $X,Y,Z \in \mathscr{A}$ and $j \in \mathbb{Z}$.
\end{enumerate}

For a DG category $\mathscr{A}$, we denote by $\mathrm{H}^0(\mathscr{A})$ an additive category whose objects are the same as $\mathscr{A}$, and the morphism set from $X$ to $Y$ is $\mathrm{H}^0(\Hom_{\mathscr{A}}(X,Y))$.

Let $\mathscr{A}$ and $\mathscr{B}$ be DG categories. 
An additive functor $F: \mathscr{A} \rightarrow \mathscr{B}$ is called a \emph{DG functor} if it preserves the grading and commutes with differentials. 
Clearly a DG functor $\mathscr{A} \rightarrow \mathscr{B}$ induces an additive functor $\mathrm{H}^0(\mathscr{A}) \rightarrow \mathrm{H}^0(\mathscr{B})$.
\end{df}

The following are typical examples of DG categories. 
Those shows that usual categories of complexes are dealt with in the context of DG categories.

\begin{ex}\label{ex_DG}
(1) Let $\mathcal{A}$ be an additive category. 
We can regard $\mathcal{A}$ as a DG category such that morphism sets are concentrated in degree $0$, namely $\Hom^0_{\mathcal{A}}(X,Y)=\Hom_{\mathcal{A}}(X,Y)$ for any $X,Y \in \mathcal{A}$.

(2) Let $\mathcal{A}$ be an additive category.
We define the DG category $\mathsf{C}_{\dg}(\mathcal{A})$ as follows. 
\begin{list}{$\bullet$}{}
\item The objects are chain complexes over $\mathcal{A}$. 
\item For $X,Y \in \mathsf{C}_{\dg}(\mathcal{A})$, the morphism sets from $X$ to $Y$ is defined by
\[
\Hom^n_{\mathsf{C}_{\dg}(\mathcal{A})}(X,Y):= \prod_{i \in \mathbb{Z}} \Hom_{\mathcal{A}}(X^i,Y^{i+n}),
\]
\[
\Hom_{\mathsf{C}_{\dg}(\mathcal{A})}(X,Y):=\bigoplus_{n \in \mathbb{Z}}\Hom^n_{\mathsf{C}_{\dg}(\mathcal{A})}(X,Y).
\]
\item The differential is given by
\[
d^n((f_i)_{i \in \mathbb{Z}}) = (d_Y f_i - (-1)^n f_{i+1} d_X)_{i \in \mathbb{Z}}
\]
for $(f_i)_{i \in \mathbb{Z}} \in \Hom^n_{\mathsf{C}_{\dg}(\mathcal{A})}(X,Y)$.
\end{list}

Then one can check that the category $\mathrm{H}^0(\mathsf{C}_{\dg}(\mathcal{A}))$ coincides with the homotopy category $\mathsf{K}(\mathcal{A})$ of complexes over $\mathcal{A}$.

Later we deal with the full DG subcategory $\mathsf{C}_{\dg}^{\mathrm{b}}(\mathcal{A})$ of $\mathsf{C}_{\dg}(\mathcal{A})$ which consists of bounded complexes over $\mathcal{A}$. 
Clearly we have $\mathsf{K}^{\mathrm{b}}(\mathcal{A})=\mathrm{H}^0(\mathsf{C}_{\dg}^{\mathrm{b}}(\mathcal{A}))$.
\end{ex}

Next we recall the derived categories of DG modules over DG categories.
Let $\mathscr{A}$ be a DG category. 
A \emph{DG $\mathscr{A}$-module} is a DG functor $\mathscr{A}^{\op} \rightarrow \mathsf{C}_{\dg}(\Mod \mathbb{Z})$.
Then DG $\mathscr{A}$-modules form a DG category which we denote by $\mathsf{C}_{\dg}(\mathscr{A})$. 
We denote by $\mathsf{K}(\mathscr{A}):=\mathrm{H}^0(\mathsf{C}_{\dg}(\mathscr{A}))$ the homotopy category of DG $\mathscr{A}$-modules.
By formally inverting quasi-isomorphisms in $\mathsf{K}(\mathscr{A})$,  we obtain the derived category $\mathsf{D}(\mathscr{A})$ of DG $\mathscr{A}$-modules.

For a DG category $\mathscr{A}$, the homotopy category $\mathsf{K}(\mathscr{A})$ has a structure of a triangulated category.
Moreover the derived category $\mathsf{D}(\mathscr{A})$ also has a structure of a triangulated category such that the canonical functor $\mathsf{K}(\mathscr{A}) \rightarrow \mathsf{D}(\mathscr{A})$ is a triangle-functor.

The above derived categories contains the derived categories of algebras.

\begin{ex}\label{ex_DG_alg}
Let $\Lambda$ be an algebra. 
We denote by $\Mod\Lambda$ the category of all $\Lambda$-modules.
As we observed in Example \ref{ex_DG} (1), we regard $\Lambda$ as a DG category whose morphism space is concentrated in degree $0$.
So we can consider the DG category $\mathsf{C}_{\dg}(\Lambda)$ of DG $\Lambda$-modules introduced in the above.
This coincides with the DG category $\mathsf{C}_{\dg}(\Mod \Lambda)$ of complexes over $\Mod\Lambda$ discussed in Example \ref{ex_DG} (2). 
Also we have $\mathsf{K}(\Lambda)=\mathsf{K}(\Mod \Lambda)$ and $\mathsf{D}(\Lambda)=\mathsf{D}(\Mod \Lambda)$.
\end{ex}

For a DG category $\mathscr{A}$, the category $\mathrm{H}^0(\mathscr{A})$ does not have a structure of a triangulated category in general. 
However we can naturally construct a triangulated category which contains $\mathrm{H}^0(\mathscr{A})$ as a full subcategory.

We call a DG functor $\mathscr{A} \rightarrow \mathsf{C}_{\dg}(\mathscr{A})$ sending $X$ to $\Hom_{\mathscr{A}}(-,X)$ the \emph{Yoneda embedding}. This is fully faithful, and we regard $\mathscr{A}$ as a DG full subcategory of $\mathsf{C}_{\dg}(\mathscr{A})$, 
and $\mathrm{H}^0(\mathscr{A})$ as a full subcategory of $\mathrm{H}^0(\mathsf{C}_{\dg}(\mathscr{A}))=\mathsf{K}(\mathscr{A})$.
Furthermore we regard $\mathrm{H}^0(\mathscr{A})$ as a full subcategory of $\mathsf{D}(\mathscr{A})$ via the composition 
$\mathrm{H}^0(\mathscr{A}) \rightarrow \mathsf{K}(\mathscr{A}) \rightarrow \mathsf{D}(\mathscr{A})$.

\begin{df}\label{df_tri_hull}
Let $\mathscr{A}$ be a DG category. 
We call the full subcategory  $\thick_{\mathsf{D}(\mathscr{A})}(\mathrm{H}^0(\mathscr{A}))$ of $\mathsf{D}(\mathscr{A})$ the \emph{triangulated hull} of $\mathrm{H}^0(\mathscr{A})$. 
\end{df}

This is a triangulated category which contains $\mathrm{H}^0(\mathscr{A})$ as a full subcategory and is generated by $\mathrm{H}^0(\mathscr{A})$.

\subsection{Derived-orbit categories}

The aim of this subsection is to introduce derived-orbit categories following \cite{Ke2}.
First we recall DG orbit categories.

\begin{df}
Let $\mathscr{A}$ be a DG category, and $F:\mathscr{A} \rightarrow \mathscr{A}$ a DG functor.
A DG  category $\mathscr{A}/F^{+}$ is defined as follows. 
\begin{list}{$\bullet$}{}
\item The objects are the same as $\mathscr{A}$.
\item For $X, Y \in \mathscr{A}$, the morphism set from $X$ to $Y$ is defined by
\[
\Hom_{\mathscr{A}/F^{+}}(X,Y):=\bigoplus_{n=0}^{\infty} \Hom_{\mathscr{A}}(F^nX,Y).
\]
\item The composition of morphisms  are defined as follows. 
For $f \in \Hom_{\mathscr{A}}(F^iX,Y) \subset \Hom_{\mathscr{A}/F^{+}}(X,Y)$ and $g \in \Hom_{\mathscr{A}}(F^jY,Z) \subset \Hom_{\mathscr{A}/F^{+}}(Y,Z)$, we define
\[
g\cdot  f := g \cdot F^j(f)  \in \Hom_{\mathscr{A}}(F^{i+j}X,Z) \subset \Hom_{\mathscr{A}/F^{+}}(X,Z).
\]
\item The differential of $\mathscr{A}/F^{+}$ is induced from that of $\mathscr{A}$.
\end{list}
\end{df}

\begin{df}
Let $\mathscr{A}$ be a DG category, and $F:\mathscr{A} \rightarrow \mathscr{A}$ a DG functor.
The \emph{DG orbit category} $\mathscr{A}/F$ is defined as follows. 
\begin{list}{$\bullet$}{}
\item The objects are the same as $\mathscr{A}$.
\item For $X, Y \in \mathscr{A}$, the morphism set from $X$ to $Y$ is defined by
\[
\Hom_{\mathscr{A}/F}(X,Y):= \mathrm{colim} \left( \Hom_{\mathscr{A}/F^{+}}(X,Y) \xrightarrow{F} \Hom_{\mathscr{A}/F^{+}}(X,FY) \xrightarrow{F} \cdots \cdots \right).
\]
\item The composition of morphisms in $\mathscr{A}/F$ is induced from that in $\mathscr{A}/F^{+}$.
\item The differential of $\mathscr{A}/F$ is induced from that of $\mathscr{A}/F^{+}$.
\end{list}
\end{df}

\begin{ex}\label{df_orbit}
As we observed in Example \ref{ex_DG} (1), an additive category $\mathcal{A}$ can be regarded as a DG category. 
For an additive functor $F: \mathcal{A} \rightarrow \mathcal{A}$, we define the orbit category $\mathcal{A}/F$ as a special case of the above definition.
\end{ex}

%

The following proposition explains the reason why we consider DG orbit categories. 
It allows us to deal with orbit categories in the context of DG orbit categories.

\begin{prop} \label{App_1}
Let $\mathscr{A}$ be a DG category, $F: \mathscr{A} \rightarrow \mathscr{A}$ a DG functor, and the induced functor $\mathrm{H}^0(F):\mathrm{H}^0(\mathscr{A}) \rightarrow \mathrm{H}^0(\mathscr{A})$. 
Then the following assertions hold.
\begin{enumerate}
\def\labelenumi{(\theenumi)} 
\item There exists an equivalence $\mathrm{H}^0(\mathscr{A}/F) \simeq \mathrm{H}^0(\mathscr{A})/\mathrm{H}^0(F)$. 
\item The triangulated hull $\thick_{\mathsf{D}(\mathscr{A}/F)} (\mathrm{H}^0(\mathscr{A}/F))$ of $\mathrm{H}^0(\mathscr{A}/F)$ is a triangulated category which contains $\mathrm{H}^0(\mathscr{A})/\mathrm{H}^0(F)$ as a full subcategory, and is generated by $\mathrm{H}^0(\mathscr{A})/\mathrm{H}^0(F)$. 
\end{enumerate}
\end{prop}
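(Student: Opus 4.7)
The plan is to establish (1) by a direct comparison of morphism spaces and then derive (2) formally from (1) together with the definition of the triangulated hull. Since both $\mathrm{H}^0(\mathscr{A}/F)$ and $\mathrm{H}^0(\mathscr{A})/\mathrm{H}^0(F)$ have the same objects as $\mathscr{A}$, and the same identity maps on objects, the task for (1) is to show that their morphism spaces agree compatibly with composition.

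For (1), I would unfold the definitions. The morphism complex $\Hom_{\mathscr{A}/F}(X,Y)$ is, by construction, the filtered colimit
\[
\mathrm{colim}\Bigl(\bigoplus_{k\geq 0}\Hom_{\mathscr{A}}(F^kX,Y) \xrightarrow{F} \bigoplus_{k\geq 0}\Hom_{\mathscr{A}}(F^kX,FY) \xrightarrow{F} \cdots \Bigr),
\]
with differential induced componentwise from $\mathscr{A}$. Since $\mathrm{H}^0$ commutes with arbitrary direct sums and with filtered colimits of complexes of abelian groups, applying it and using the identification $\mathrm{H}^0(\Hom_{\mathscr{A}}(F^kX,F^nY)) = \Hom_{\mathrm{H}^0(\mathscr{A})}(\mathrm{H}^0(F)^kX,\mathrm{H}^0(F)^nY)$ yields exactly the morphism space $\Hom_{\mathrm{H}^0(\mathscr{A})/\mathrm{H}^0(F)}(X,Y)$ obtained by applying Example \ref{df_orbit} to the additive functor $\mathrm{H}^0(F)$. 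The identification respects composition because at the DG level composition is defined by applying appropriate powers of $F$ to morphisms before composing in $\mathscr{A}$, and this prescription descends verbatim to $\mathrm{H}^0(F)$ at the level of cohomology.

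For (2), by Definition \ref{df_tri_hull} the triangulated hull $\thick_{\mathsf{D}(\mathscr{A}/F)}(\mathrm{H}^0(\mathscr{A}/F))$ is a thick subcategory of the triangulated category $\mathsf{D}(\mathscr{A}/F)$, hence itself triangulated, and by construction it is generated as a triangulated category by $\mathrm{H}^0(\mathscr{A}/F)$. By (1), this generating subcategory is equivalent to $\mathrm{H}^0(\mathscr{A})/\mathrm{H}^0(F)$. It remains to check that $\mathrm{H}^0(\mathscr{A}/F)$ embeds as a full subcategory of $\mathsf{D}(\mathscr{A}/F)$: this is the standard consequence of the Yoneda embedding $\mathscr{A}/F \hookrightarrow \mathsf{C}_{\dg}(\mathscr{A}/F)$ together with the fact that representable DG modules are K-projective, so that the composition $\mathrm{H}^0(\mathscr{A}/F) \to \mathsf{K}(\mathscr{A}/F) \to \mathsf{D}(\mathscr{A}/F)$ is fully faithful.

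The main obstacle, as I see it, lies in the careful bookkeeping required for (1): one must verify that the composition law of the DG orbit category, which involves applying powers of $F$ to morphisms before composing in $\mathscr{A}$, really does induce the expected composition law of the classical orbit category of Example \ref{df_orbit} after passing to cohomology, and that the equivalence class representatives in the colimit behave well under these operations. Once this verification is complete, both assertions reduce to formal manipulations with the definitions and the exactness of $\mathrm{H}^0$.
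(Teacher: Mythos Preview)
Your proposal is correct and follows essentially the same route as the paper: for (1) the paper likewise unwinds the definitions and appeals to the commutation of $\mathrm{H}^0$ with filtered colimits (phrased there as ``exactness of colimit''), passing through the intermediate identification $\mathrm{H}^0(\mathscr{A}/F^{+}) \simeq \mathrm{H}^0(\mathscr{A})/\mathrm{H}^0(F)^{+}$; for (2) the paper simply writes ``This is clear,'' so your remarks on the Yoneda embedding and K-projectivity of representables actually supply more detail than the original.
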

\begin{proof}
(1) Clearly we have $\mathrm{H}^0(\mathscr{A}/F^{+}) \simeq \mathrm{H}^0(\mathscr{A})/\mathrm{H}^0(F)^{+}$. 
For any $X,Y \in \mathscr{A}$, there are isomorphisms 
\begin{eqnarray*}
& & \Hom_{\mathrm{H}^0(\mathscr{A}/F)}(X,Y) \\
&\simeq& \mathrm{H}^0\left(\mathrm{colim} \left( \Hom_{\mathscr{A}/F^{+}}(X,Y) \xrightarrow{F} \Hom_{\mathscr{A}/F^{+}}(X,FY) \xrightarrow{F} \cdots \cdots \right)\right) \\
&\simeq& \mathrm{colim} \left(\Hom_{ \mathrm{H}^0(\mathscr{A}/F^{+})}(X,Y) \xrightarrow{\mathrm{H}^0(F)} \Hom_{\mathrm{H}^0(\mathscr{A}/F^{+})}(X,FY) \xrightarrow{\mathrm{H}^0(F)} \cdots \cdots \right) \\ 
&\simeq& \mathrm{colim} \left( \Hom_{\mathrm{H}^0(\mathscr{A})/\mathrm{H}^0(F)^{+}}(X,Y) \xrightarrow{\mathrm{H}^0(F)} \Hom_{\mathrm{H}^0(\mathscr{A})/\mathrm{H}^0(F)^{+}}(X,FY) \xrightarrow{\mathrm{H}^0(F)} \cdots \cdots \right) \\
&\simeq& \Hom_{\mathrm{H}^0(\mathscr{A})/\mathrm{H}^0(F)}(X,Y).
\end{eqnarray*}
The second isomorphism follows from exactness of colimit (cf. \cite[Theorem III.1.9]{Mi}).

(2) This is clear. 
\end{proof}

%
%
%

Now we are ready to define derived-orbit categories.

\begin{df}\label{df_orbit}
Let $\Lambda$ be an algebra of finite global dimension, and $M$ a bounded complex of $\Lambda^{\op} \otimes_K \Lambda$-modules.
Let $\mathscr{A}:=\mathsf{C}^{\mathrm{b}}_{\dg}(\proj \Lambda)$ be the DG category of bounded complexes over $\proj\Lambda$ (Example \ref{ex_DG} (2)).
Then we have $\mathrm{H}^0(\mathscr{A})=\mathsf{K}^{\mathrm{b}}(\proj\Lambda)=\mathsf{D}^{\mathrm{b}}(\mod\Lambda)$ by Proposition \ref{Serre=gldim}.
Since $\Lambda^{\op} \otimes_K \Lambda$ has finite global dimension, there exists a quasi-isomorphism $pM \rightarrow M$ with a bounded complex $pM$ of projective $\Lambda^{\op} \otimes_K \Lambda$-modules. 
Consider the DG functor.
\[
F:=-\otimes_{\Lambda} pM:\mathscr{A} \rightarrow \mathscr{A}.
\] 
Then we have 
\[
\mathrm{H}^0(F)=- \Lten_{\Lambda}M : \mathsf{D}^{\mathrm{b}}(\mod\Lambda) \rightarrow \mathsf{D}^{\mathrm{b}}(\mod\Lambda).
\]
We call the triangulated hull
\[
\mathsf{D}(\Lambda,M):=\thick_{\mathsf{D}(\mathscr{A}/F)}(\mathrm{H}^0(\mathscr{A}/F))
\]
of $\mathrm{H}^0(\mathscr{A}/F)$ the \emph{derived-orbit category} of $(\Lambda,M)$.
\end{df}

By Proposition \ref{App_1}, we have $\mathrm{H}^0(\mathscr{A}/F) \simeq \mathrm{H}^0(\mathscr{A})/\mathrm{H}^0(F) \simeq \mathsf{D}^{\mathrm{b}}(\mod \Lambda)/(-\Lten_{\Lambda}M)$.
Thus we have the following result.

\begin{prop}\label{gen_orbit}
The derived-orbit category $\mathsf{D}(\Lambda,M)$ is a triangulated category which contains $\mathsf{D}^{\mathrm{b}}(\mod \Lambda)/(-\Lten_{\Lambda}M)$ as a full subcategory. 
Moreover $\mathsf{D}^{\mathrm{b}}(\mod \Lambda)/(-\Lten_{\Lambda}M)$ is closed under shifts $[\pm 1]$ and generates $\mathsf{D}(\Gamma,M[1])$ as a triangulated category.
\end{prop}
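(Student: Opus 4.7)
The plan is that this proposition, despite appearances, is essentially a bookkeeping consequence of the Definition~\ref{df_orbit} of $\mathsf{D}(\Lambda,M)$ together with Proposition~\ref{App_1}. I read the terminal ``$\mathsf{D}(\Gamma,M[1])$'' as a typographical slip for $\mathsf{D}(\Lambda,M)$, since no algebra $\Gamma$ has been introduced at this point of the paper, and the DG data $(\mathscr{A},F)$ giving rise to the derived-orbit category in Definition~\ref{df_orbit} is built entirely from $(\Lambda,M)$. I will prove all three claims for $\mathsf{D}(\Lambda,M)$.

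First, $\mathsf{D}(\Lambda,M)=\thick_{\mathsf{D}(\mathscr{A}/F)}(\mathrm{H}^0(\mathscr{A}/F))$ is triangulated by Definition~\ref{df_tri_hull}, being a thick subcategory of the derived category of the DG orbit category $\mathscr{A}/F$, where $\mathscr{A}=\mathsf{C}_{\mathrm{dg}}^{\mathrm{b}}(\proj\Lambda)$ and $F=-\otimes_{\Lambda}pM$. Via the Yoneda embedding $\mathscr{A}/F\hookrightarrow \mathsf{C}_{\mathrm{dg}}(\mathscr{A}/F)$ (which is fully faithful on morphism complexes and hence induces a fully faithful functor on $\mathrm{H}^0$), we realize $\mathrm{H}^0(\mathscr{A}/F)$ as a full subcategory of $\mathsf{D}(\mathscr{A}/F)$, and therefore of $\mathsf{D}(\Lambda,M)$. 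Proposition~\ref{App_1}(1) identifies $\mathrm{H}^0(\mathscr{A}/F)\simeq \mathrm{H}^0(\mathscr{A})/\mathrm{H}^0(F)$, and because $\Lambda$ has finite global dimension Theorem~\ref{Serre=gldim} yields $\mathrm{H}^0(\mathscr{A})=\mathsf{K}^{\mathrm{b}}(\proj\Lambda)\simeq \mathsf{D}^{\mathrm{b}}(\mathrm{mod}\,\Lambda)$, while $\mathrm{H}^0(F)$ is exactly $-\Lten_{\Lambda}M$ since $pM\to M$ is a quasi-isomorphism. Composing these equivalences gives the desired fully faithful embedding of $\mathsf{D}^{\mathrm{b}}(\mathrm{mod}\,\Lambda)/(-\Lten_{\Lambda}M)$ into $\mathsf{D}(\Lambda,M)$.

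Next I verify closure under $[\pm 1]$. The shift functor on $\mathsf{D}(\mathscr{A}/F)$ comes from the shift of DG modules, which under the Yoneda embedding acts on objects of $\mathrm{H}^0(\mathscr{A}/F)$ as the shift functor already present on $\mathrm{H}^0(\mathscr{A})=\mathsf{D}^{\mathrm{b}}(\mathrm{mod}\,\Lambda)$. The DG functor $F$ commutes with this shift (tensoring with a complex commutes with translation up to sign), so the shift descends to $\mathrm{H}^0(\mathscr{A})/\mathrm{H}^0(F)$, and the equivalence of Proposition~\ref{App_1}(1) intertwines the two shifts. Hence $\mathsf{D}^{\mathrm{b}}(\mathrm{mod}\,\Lambda)/(-\Lten_{\Lambda}M)$ is stable under $[\pm 1]$ as a subcategory of $\mathsf{D}(\Lambda,M)$.

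Finally, the generation statement is built into the definition: $\mathsf{D}(\Lambda,M)$ was defined precisely as the smallest thick subcategory of $\mathsf{D}(\mathscr{A}/F)$ containing $\mathrm{H}^0(\mathscr{A}/F)$, and this latter is $\mathsf{D}^{\mathrm{b}}(\mathrm{mod}\,\Lambda)/(-\Lten_{\Lambda}M)$ by the identifications above. Thus there is no genuine obstacle; the only points requiring care are (i) confirming that the shift on the DG derived category restricts to the expected shift on the orbit category of $\mathsf{D}^{\mathrm{b}}(\mathrm{mod}\,\Lambda)$, and (ii) handling the apparent typo, which I resolve by reading the target as $\mathsf{D}(\Lambda,M)$, the only derived-orbit category available at this stage.
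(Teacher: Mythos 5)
Your proposal is correct and takes essentially the same route as the paper, which deduces this proposition directly from Definition \ref{df_orbit} and Proposition \ref{App_1} (together with Theorem \ref{Serre=gldim} to identify $\mathrm{H}^0(\mathscr{A})$ with $\mathsf{D}^{\mathrm{b}}(\mod\Lambda)$); your reading of ``$\mathsf{D}(\Gamma,M[1])$'' as a slip for $\mathsf{D}(\Lambda,M)$ is also the right one. The only extra content you supply is the explicit verification of closure under $[\pm 1]$ via compatibility of the shift with the Yoneda embedding and with $F$, which the paper leaves implicit.
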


\subsection{A universal property of derived-orbit categories}
In this subsection, we recall a universal property of derived-orbit categories following \cite[Section 9]{Ke2} and \cite[Theorem A. 20]{IO2}.

Let $A$ be a positively graded self-injective algebra, and $\Gamma$ an algebra of finite global dimension. 
Let $U \in \mathsf{D}^{\mathrm{b}}(\mod^{\mathbb{Z}} (\Gamma^{\op} \otimes_K A))$, $N \in \mathsf{D}^{\mathrm{b}}(\mod( \Gamma^{\op} \otimes_K \Gamma))$, and
\[
F:=-\Lten_{\Gamma}N: \mathsf{D}^{\mathrm{b}}(\mod\Gamma) \longrightarrow \mathsf{D}^{\mathrm{b}}(\mod\Gamma).
\]
We consider the derived tensor functor
\[
-\Lten_{\Gamma}U: \mathsf{D}^{\mathrm{b}}(\mod\Gamma) \longrightarrow \mathsf{D}^{\mathrm{b}}(\mod^{\mathbb{Z}}A)
\]
and the canonical functor 
\[
H: \mathsf{D}^{\mathrm{b}}(\mod^{\mathbb{Z}}A) \longrightarrow \underline{\mod}^{\mathbb{Z}}A
\]
given in \eqref{df_H}.
Composing them, we have a triangle-functor
\begin{eqnarray}
G:=H \circ (-\Lten_{\Gamma}U): \mathsf{D}^{\mathrm{b}}(\mod\Gamma) \longrightarrow \underline{\mod}^{\mathbb{Z}}A. \label{comp_fun}
\end{eqnarray}

Now we can state a universal property of derived-orbit categories.

\begin{thm}\label{univ_orbit}\cite[Theorem A. 20]{IO2}
We fix an integer $a$. 
Assume that there exists a triangle
\begin{eqnarray}
P \longrightarrow U(a) \longrightarrow N \Lten_{\Gamma} U \longrightarrow P[1]  \label{univ_tri}
\end{eqnarray}
in $\mathsf{D}^{\mathrm{b}}(\mod^{\mathbb{Z}} (\Gamma^{\op} \otimes_K A))$ such that 
$P$ belongs to $\mathsf{K}^{\mathrm{b}}(\proj^{\mathbb{Z}}A)$ as an object in $\mathsf{D}^{\mathrm{b}}(\mod^{\mathbb{Z}}A)$.

Then there exist an additive functor $\mathsf{D}^{\mathrm{b}}(\mod \Gamma)/F \rightarrow (\underline{\mod}^{\mathbb{Z}}A)/(a)$ and a triangle-functor $\widetilde{G}:\mathsf{D}(\Gamma,N) \rightarrow \underline{\mod}^{\mathbb{Z}/a\mathbb{Z}}A$ which makes the diagram
\[
\xymatrix{
\mathsf{D}^{\mathrm{b}}(\mod \Gamma) \ar[rr]^{G}  \ar[d]_{\mathrm{nat.}} & & \underline{\mod}^{\mathbb{Z}}A \ar[d]^{\mathrm{nat.}} \ar@/^20mm/[dd]^{\underline{F_a}}\\
\mathsf{D}^{\mathrm{b}}(\mod \Gamma)/F \ar[rr]  \ar[d] & & (\underline{\mod}^{\mathbb{Z}}A)/(a) \ar[d] \\
\mathsf{D}(\Gamma,N) \ar[rr]_{\widetilde{G}} & & \underline{\mod}^{\mathbb{Z}/a\mathbb{Z}}A
}
\]
commutative. 
\end{thm}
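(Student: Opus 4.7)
The plan is to lift the entire diagram to the DG level and appeal to the universal property of the DG orbit category, whence the result becomes essentially formal.

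First, I would set up DG models. Let $\mathscr{A} := \mathsf{C}^{\mathrm{b}}_{\dg}(\proj \Gamma)$, so $\mathrm{H}^0(\mathscr{A}) = \mathsf{D}^{\mathrm{b}}(\mod \Gamma)$ by Theorem \ref{Serre=gldim}. Choose bounded projective resolutions $pN \to N$ over $\Gamma^{\op}\otimes_K\Gamma$ and $pU \to U$ over $\Gamma^{\op}\otimes_K A$ (with its $\mathbb{Z}$-grading); both exist since $\Gamma$ has finite global dimension. Then $\widetilde{F} := -\otimes_\Gamma pN$ is a DG endofunctor of $\mathscr{A}$ lifting $F$. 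Let $\mathscr{B}$ be a DG enhancement of $\underline{\mod}^{\mathbb{Z}/a\mathbb{Z}}A$, which is algebraic by Lemma \ref{Gra-Fro} and hence admits one (for instance, realized as a DG Verdier quotient in the spirit of Theorem \ref{Rickard}).

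Second, build a DG functor $\widetilde{H}:\mathscr{A}\to \mathscr{B}$ by $-\otimes_\Gamma pU$ composed with the reduction of gradings from $\mathbb{Z}$ to $\mathbb{Z}/a\mathbb{Z}$ and the canonical DG projection to $\mathscr{B}$; at the level of $\mathrm{H}^0$ this gives $\underline{F_a}\circ G$. The key calculation is to promote the hypothesized triangle $P\to U(a)\to N\Lten_\Gamma U \to P[1]$ into a DG-natural quasi-isomorphism $\widetilde{H}\circ \widetilde{F}\simeq \widetilde{H}$: the grade shift $(a)$ becomes trivial on $\mathbb{Z}/a\mathbb{Z}$-graded modules, and $P\in \mathsf{K}^{\mathrm{b}}(\proj^{\mathbb{Z}}A)$ becomes zero in the stable category, so at the level of $\mathrm{H}^0(\mathscr{B})$ the two functors agree. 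With a cofibrant enough choice of $pN$ and $pU$, this derived equivalence can be lifted to a natural transformation of DG functors whose components are homotopy equivalences in $\mathscr{B}$.

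Third, by the construction of the DG orbit category $\mathscr{A}/\widetilde{F}$ (whose morphism spaces are precisely the colimits $\mathrm{colim}_n\Hom_{\mathscr{A}}(\widetilde{F}^n X, Y)$), the DG functor $\widetilde{H}$ together with the natural equivalence $\widetilde{H}\circ\widetilde{F}\simeq \widetilde{H}$ extends to a DG functor $\overline{H}:\mathscr{A}/\widetilde{F}\to \mathscr{B}$. Applying $\mathrm{H}^0$ and invoking Proposition \ref{App_1}(1) produces the middle horizontal arrow of the diagram together with the commutativity of the upper square. For the lower square, observe that $\overline{H}$ induces a triangle-functor $\mathsf{D}(\mathscr{A}/\widetilde{F})\to \mathsf{D}(\mathscr{B})$ (via extension along the Yoneda embedding). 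Restricting to the triangulated hull $\mathsf{D}(\Gamma,N) = \thick_{\mathsf{D}(\mathscr{A}/\widetilde{F})}\mathrm{H}^0(\mathscr{A}/\widetilde{F})$ and composing with the inclusion $\underline{\mod}^{\mathbb{Z}/a\mathbb{Z}}A\simeq \mathrm{H}^0(\mathscr{B})\hookrightarrow \mathsf{D}(\mathscr{B})$ yields the desired $\widetilde{G}$.

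The main obstacle is the second step: turning the abstract derived-categorical triangle into a strict DG-natural quasi-isomorphism between two DG functors. One must choose the resolutions $pN$ and $pU$ with sufficient cofibrancy and verify that all higher coherences demanded by the colimit structure of morphism spaces in $\mathscr{A}/\widetilde{F}$ are satisfied. Once this is in place, the remaining assertions are formal consequences of Keller's DG machinery, which is why the result can be subsumed into the framework of \cite[Theorem A.20]{IO2}.
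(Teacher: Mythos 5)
The paper itself gives no proof of this statement: it is imported verbatim as \cite[Theorem A.20]{IO2}, following \cite[Section 9]{Ke2}, so there is no ``paper's own proof'' to compare against. That said, your strategy — lift everything to the DG level, construct a DG functor out of the DG orbit category $\mathscr{A}/\widetilde{F}$ via its universal property, then pass to $\mathrm{H}^0$ and take triangulated hulls — is precisely Keller's and Iyama--Oppermann's strategy, so the architecture of the argument is right.

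The genuine gap is the one you yourself flag, and it is not a formality that can be deferred. To factor $\widetilde{H}$ through $\mathscr{A}/\widetilde{F}$ you need, on morphism spaces, a map $\mathrm{colim}_n\Hom_{\mathscr{A}/\widetilde{F}^{+}}(X,\widetilde{F}^nY)\to\Hom_{\mathscr{B}}(\widetilde{H}X,\widetilde{H}Y)$. Given $f\in\Hom_{\mathscr{A}}(\widetilde{F}^iX,\widetilde{F}^nY)$, the obvious recipe produces a morphism into $\widetilde{H}(\widetilde{F}^nY)$, and to land in $\widetilde{H}(Y)$ you must invert the comparison morphisms $\widetilde{H}(Y)\to\widetilde{H}(\widetilde{F}Y)\to\cdots$. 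A DG natural transformation whose components are merely homotopy equivalences (``DG-natural quasi-isomorphism'') does not invert in the DG category $\mathscr{B}$; you need either a strict isomorphism of DG functors, or to replace $\mathscr{B}$ by a larger model (e.g.\ $\mathsf{C}_{\dg}$ of a suitable quotient, or DG bimodules) in which the homotopy inverse can be chosen compatibly with the colimit system. The proof in \cite{IO2} handles exactly this: the hypothesis that $P$ is $A$-perfect is used to produce, on the nose, the required comparison morphism between the DG bimodules $U(a)$ and $N\Lten_{\Gamma}U$ after killing the perfect part, and the construction is carried out in the category of DG bimodules rather than in an abstract DG enhancement $\mathscr{B}$. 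So your outline is the correct shape, but saying ``with a cofibrant enough choice of $pN$ and $pU$, this can be lifted'' skips over the actual theorem: that lift, and its compatibility with the colimit structure of the DG orbit category, is the substance of \cite[Theorem A.20]{IO2}, not a consequence of general nonsense.
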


\section{Realizing stable categories as derived-orbit categories}
\label{Gor_section}

In this section, we compare the stable categories of self-injective algebras and derived-orbit categories. 
We give a precise statement of Theorem \ref{intro_thm3} by giving $\Gamma$ and $M$ explicitly and prove it.
We start with the following motivating observation.

Let $A$ be a positively graded self-injective algebra and $\ell$ a fixed integer.
By Proposition \ref{Z_to_Z/aZ}, the triangle-functor $\underline{F_{\ell}}: \underline{\mod}^{\mathbb{Z}}A \rightarrow \underline{\mod}^{\mathbb{Z}/\ell\mathbb{Z}}A$ induces a fully faithful functor $(\underline{\mod}^{\mathbb{Z}}A)/(\ell) \rightarrow \underline{\mod}^{\mathbb{Z}/\ell\mathbb{Z}}A$ where $(\underline{\mod}^{\mathbb{Z}}A)/(\ell)$ is the orbit category of $\underline{\mod}^{\mathbb{Z}}A$ with respect to $(\ell)$. 
Moreover $\thick_{\underline{\mod}^{\mathbb{Z}/\ell\mathbb{Z}}A}((\underline{\mod}^{\mathbb{Z}}A)/(\ell)) =\underline{\mod}^{\mathbb{Z}/\ell\mathbb{Z}}A$ holds by Proposition \ref{sma_tri}.

In view of the equivalence $\underline{\mod}^{\mathbb{Z}}A \simeq \mathsf{D}^{\mathrm{b}}(\mod\Gamma)$ in Theorem \ref{equ1}, it is reasonable to expect that $\underline{\mod}^{\mathbb{Z}/\ell\mathbb{Z}}A$ is realized as a derived-orbit category $\mathsf{D}(\Gamma,M)$ for a certain $M$.
In fact, we see in Theorem \ref{hull_vs_graded} that this is the case if $A$ has Gorensten parameter $\ell$.

\subsection{Main result}

In this subsection, we give a precise statement of Theorem \ref{intro_thm3} in Theorem \ref{hull_vs_graded}.
We fix the following notations.

Let $A$ be a positively graded self-injective algebra of Gorenstein parameter $\ell$ (see Definition \ref{df_Gor_par}) such that $A_0$ has finite global dimension.
Let $T$ be the tilting object in $\underline{\mod}^{\mathbb{Z}}A$ given in $\eqref{df_T}$, 
and $\underline{T}$ the direct summand of $T$ given in Proposition \ref{cal_end}.
Then we have $\underline{T}=\bigoplus_{i=0}^{\ell-1}A(i)_{\leq 0}$. 
We naturally identify relevant algebras
\begin{eqnarray}
\Gamma:=\underline{\End}_{A}(T)_0=\End_A(\underline{T})_0 \stackrel{\mbox{Lem \ref{nonstab_end}}}{=} \left( \begin{array}{ccccc} 
A_0 & A_1 & \cdots & A_{\ell-2} &  A_{\ell-1}  \\
 & A_0 & \cdots & A_{\ell-3} & A_{\ell-2}  \\
 &  & \ddots & \vdots & \vdots \\
 &  &  & A_0 & A_1   \\
0 &  & & &  A_0 
\end{array} \right). \label{df_Beilinson}
\end{eqnarray}

We regard $\Gamma^{\op}\otimes_KA$ as a $\mathbb{Z}$-graded algebra by Definition \ref{tesor_grading}, and regard $\underline{T}$ as a $\mathbb{Z}$-graded $\Gamma^{\op}\otimes_KA$-module.
It is convenient to write $\underline{T}$ as the following matrix form.
\[
\underline{T}=\left( \begin{array}{c}
A(\ell-1)_{\leq 0} \\
A(\ell-2)_{\leq 0} \\
\vdots \\
A(0)_{\leq 0}
\end{array} \right) =
\stackrel{1-\ell \ \ \  2-\ell  \ \ \ \   \cdots \ \ \ \  -1 \ \ \ \ \ \  0 \ \ \ \ \ }{\left( \begin{array}{ccccc} 
A_0 & A_1 & \cdots & A_{\ell-2} &  A_{\ell-1}  \\
 & A_0 & \cdots & A_{\ell-3} & A_{\ell-2}  \\
 &  & \ddots & \vdots & \vdots \\
 &  &  & A_0 & A_1   \\
0 &  & & &  A_0 
\end{array} \right)}.
\]
Here the numbers $1-\ell, 2-\ell, \cdots, -1,0$ above show the degrees.
The action of $\Gamma$ on $\underline{T}$ from the left is given by the matrix multiplication.
Thus $\underline{T}$ is isomorphic to $\Gamma$ as a $\Gamma^{\op}$-module.

Now we consider a $\mathbb{Z}$-graded $A$-module
\[
M:=\bigoplus_{i=\ell}^{2\ell-1}A(i)_{\geq 1-\ell}.
\]
It is convenient to write $M$ as the following matrix form by the similar way to $\underline{T}$.
\begin{eqnarray}
M=\left( \begin{array}{c}
A(2\ell-1)_{\geq 1-\ell} \\
A(2\ell-2)_{\geq 1-\ell} \\
\vdots \\
A(\ell)_{\geq 1-\ell}
\end{array} \right) =\stackrel{1-\ell \ \ \ \  2-\ell  \ \ \ \  \cdots \ \ \  \  -1 \ \ \ \ \ \  0   }{\left( \begin{array}{ccccc} 
A_{\ell} & & &  & 0 \\
A_{\ell-1} & A_{\ell} &  & &   \\
\vdots & \vdots & \ddots &  &  \\
A_2 & A_3 & \cdots & A_{\ell} &  \\
A_1 & A_2  & \cdots & A_{\ell-1} &  A_{\ell} 
\end{array} \right)}. \label{df_M}
\end{eqnarray}

The algebra $\Gamma$ acts on $M$ from both sides by the matrix multiplication, and we can regard $M$ as a $\Gamma^{\op} \otimes_K \Gamma$-module.
Since the action of $\Gamma$ on $M$ from the left commutes with that of $A$ from the right, we can regard $M$ as a $\mathbb{Z}$-graded $\Gamma^{\op} \otimes_K A$-module.

We consider triangle-functors
\[
F:=-\Lten_{\Gamma}M[1]: \mathsf{D}^{\mathrm{b}}(\mod \Gamma) \longrightarrow \mathsf{D}^{\mathrm{b}}(\mod \Gamma),
\]
\[
 G:=H \circ (-\Lten_{\Gamma}\underline{T}): \mathsf{D}^{\mathrm{b}}(\mod \Gamma) \longrightarrow \underline{\mod}^{\mathbb{Z}}A
\]
where $H$ is given in \eqref{df_H}. 
We know by Theorem \ref{equ2} that $G$ is an equivalence. 

We are ready to state our main results in this section.

\begin{prop}\label{req_comm}
We have the following commutative diagram of triangle-equivalences up to an isomorphism of functors.
\begin{eqnarray}
\xymatrix{
\mathsf{D}^{\mathrm{b}}(\mod\Gamma) \ar[d]_{F}  \ar[rr]^G & & \underline{\mod}^{\mathbb{Z}}A \ar[d]^{(\ell)}\\
\mathsf{D}^{\mathrm{b}}(\mod\Gamma) \ar[rr]_G & & \underline{\mod}^{\mathbb{Z}}A
} \label{commu1}
\end{eqnarray}
\end{prop}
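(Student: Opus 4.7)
The plan is to unpack both compositions in the diagram and reduce commutativity to the existence of an explicit triangle in $\mathsf{D}^{\mathrm{b}}(\mod^{\mathbb{Z}}(\Gamma^{\op} \otimes_K A))$ whose middle term becomes perfect after derived tensor with any $X \in \mathsf{D}^{\mathrm{b}}(\mod \Gamma)$. Since $H$ is a triangle functor, $- \Lten_{\Gamma} -$ is a triangle bifunctor, and the grade shift $(\ell)$ commutes with these, one computes $((\ell) \circ G)(X) = H(X \Lten_{\Gamma} \underline{T}(\ell))$ and $(G \circ F)(X) = H(X \Lten_{\Gamma} (M \Lten_{\Gamma} \underline{T}))[1]$. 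It therefore suffices to produce, naturally in $X$, a triangle
\[
M \Lten_{\Gamma} \underline{T} \longrightarrow \widehat{P} \longrightarrow \underline{T}(\ell) \longrightarrow (M \Lten_{\Gamma} \underline{T})[1]
\]
in $\mathsf{D}^{\mathrm{b}}(\mod^{\mathbb{Z}}(\Gamma^{\op} \otimes_K A))$ such that $X \Lten_{\Gamma} \widehat{P} \in \mathsf{K}^{\mathrm{b}}(\proj^{\mathbb{Z}} A)$ for every $X$.

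The first step is to recognise $M \Lten_{\Gamma} \underline{T} \simeq M$ as bimodules. From the matrix description \eqref{df_Beilinson}, the entries $\underline{T}_{k,j}$ and $\Gamma_{k,j}$ both equal $A_{j-k}$, so $\underline{T}$ is isomorphic to $\Gamma$ as a left $\Gamma$-module; this kills the derived correction, and the entrywise matrix-multiplication map $m \otimes t \mapsto m \cdot t$ (evaluated in $A$) yields a $\Gamma$-$A$-bimodule isomorphism $M \otimes_{\Gamma} \underline{T} \xrightarrow{\sim} M$. The second step builds the required short exact sequence from the row-wise truncations
\[
0 \longrightarrow A(2\ell-i)_{\geq 1-\ell} \longrightarrow A(2\ell-i) \longrightarrow A(2\ell-i)_{\leq -\ell} \longrightarrow 0
\]
for $i = 1, \ldots, \ell$, whose outer terms are the $i$-th summands of $M$ and of $\underline{T}(\ell)$. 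Summing and endowing $\widehat{P} := \bigoplus_{i=1}^{\ell} A(2\ell-i)$ with the left $\Gamma$-action given by matrix multiplication on rows gives a short exact sequence $0 \to M \to \widehat{P} \to \underline{T}(\ell) \to 0$ of $\Gamma^{\op} \otimes_K A$-modules. The inclusion is $\Gamma$-equivariant because the entry $\Gamma_{k,l} = A_{l-k}$ lies in non-negative $A$-degree and hence preserves the truncation $\geq 1-\ell$; the right $A$-action on $\widehat{P}$ is the obvious one on each summand, commuting with $\Gamma$ by $A$-associativity.

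The middle term $\widehat{P}$ is free as a graded right $A$-module, and since $\Gamma$ has finite global dimension by Theorem~\ref{gldim_fin}, any $X \in \mathsf{D}^{\mathrm{b}}(\mod \Gamma)$ is quasi-isomorphic to a bounded complex of projective $\Gamma$-modules, so $X \Lten_{\Gamma} \widehat{P} \in \mathsf{K}^{\mathrm{b}}(\proj^{\mathbb{Z}} A)$ and hence $H(X \Lten_{\Gamma} \widehat{P}) = 0$. Applying $X \Lten_{\Gamma}-$ followed by $H$ to the triangle therefore collapses to the desired natural isomorphism $((\ell) \circ G)(X) \simeq (G \circ F)(X)$. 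The main obstacle is the careful bookkeeping of bimodule structures throughout: verifying that the left $\Gamma$-action constructed on $\widehat{P}$ descends to the correct one on the quotient $\underline{T}(\ell)$ (matching the shift of $\underline{T}$'s own $\Gamma^{\op} \otimes_K A$-structure), and that the matrix-multiplication map $M \otimes_{\Gamma} \underline{T} \simeq M$ is compatible with both the left $\Gamma$- and the right $A$-action, not merely as an abstract iso of left $\Gamma$-modules.
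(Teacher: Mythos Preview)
Your proof is correct and follows essentially the same route as the paper: you establish $M\Lten_{\Gamma}\underline{T}\simeq M$ (the paper's Lemma~\ref{ten_lem}), construct the short exact sequence $0\to M\to\widehat{P}\to\underline{T}(\ell)\to 0$ of $\mathbb{Z}$-graded $\Gamma^{\op}\otimes_K A$-modules with $\widehat{P}=\bigoplus_{i=\ell}^{2\ell-1}A(i)$ (the paper's Lemma~\ref{reqtri_lem}), and then apply $H(X\Lten_{\Gamma}-)$ to the resulting triangle, exactly as the paper does. Your invocation of Theorem~\ref{gldim_fin} to justify $X\Lten_{\Gamma}\widehat{P}\in\mathsf{K}^{\mathrm{b}}(\proj^{\mathbb{Z}}A)$ makes explicit a point the paper leaves implicit.
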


The above commutative diagram suggests that the derived-orbit category $\mathsf{D}(\Gamma,M[1])$ is equivalent to $\underline{\mod}^{\mathbb{Z}/\ell\mathbb{Z}}A$, and this is in fact the case by the following result.

\begin{thm}\label{hull_vs_graded}
There exists a triangle-equivalence 
\[
\widetilde{G}:\mathsf{D}(\Gamma,M[1]) \longrightarrow \underline{\mod}^{\mathbb{Z}/\ell\mathbb{Z}}A
\]
which makes the following diagram commutative.
\begin{eqnarray}
\xymatrix{
\mathsf{D}^{\mathrm{b}}(\mod\Gamma) \ar[rr]^G \ar[d]_{\mathrm{nat.}} & & \underline{\mod}^{\mathbb{Z}}A \ar[d]^{\underline{F_{\ell}}} \\
\mathsf{D}(\Gamma,M[1]) \ar[rr]_{\widetilde{G}} & & \underline{\mod}^{\mathbb{Z}/\ell\mathbb{Z}}A 
} \label{commu2}
\end{eqnarray}
\end{thm}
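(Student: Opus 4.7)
The plan is to apply the universal property of derived-orbit categories (Theorem \ref{univ_orbit}) with $a=\ell$, $U=\underline{T}$, $N=M[1]$, and $G$ as in \eqref{comp_fun}. This will produce the triangle-functor $\widetilde{G}:\mathsf{D}(\Gamma,M[1]) \to \underline{\mod}^{\mathbb{Z}/\ell\mathbb{Z}}A$ fitting into the diagram \eqref{commu2}. Two things then remain: verifying the hypothesis of Theorem \ref{univ_orbit}, and showing that $\widetilde{G}$ is a triangle-equivalence.

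For the hypothesis, I would construct the required triangle from the truncation short exact sequences
\[
0 \to A(i+\ell)_{\geq 1-\ell} \to A(i+\ell) \to A(i+\ell)_{\leq -\ell} \to 0
\]
in $\mod^{\mathbb{Z}}A$ for $i=0,\dots,\ell-1$. Taking the direct sum and using the matrix description \eqref{df_M}, the left-hand term is $M$, the right-hand term is $\underline{T}(\ell)$, and the middle term $P:=\bigoplus_{i=0}^{\ell-1}A(i+\ell)$ belongs to $\mathsf{K}^{\mathrm{b}}(\proj^{\mathbb{Z}}A)$. This sequence is $\Gamma$-equivariant when all three terms are equipped with the natural left $\Gamma$-action by matrix multiplication, so it lives in $\mod^{\mathbb{Z}}(\Gamma^{\op}\otimes_KA)$. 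Rotating the resulting triangle gives $P \to \underline{T}(\ell) \to M[1] \to P[1]$. To put this in the precise form required by Theorem \ref{univ_orbit}, I would verify the identification $M\Lten_\Gamma\underline{T} \cong M$, which follows from the fact that $\underline{T}\cong\Gamma$ as a left $\Gamma$-module (as noted before the theorem) together with a direct check that the transported right $A$-module structure matches the intrinsic one on $M$.

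To show $\widetilde{G}$ is a triangle-equivalence, I would apply Lemma \ref{uses_five}. Let $\mathscr{X}$ be the essential image of the natural functor $\mathsf{D}^{\mathrm{b}}(\mod\Gamma) \to \mathsf{D}(\Gamma,M[1])$. By Proposition \ref{gen_orbit}, $\mathscr{X}$ is closed under shifts and generates $\mathsf{D}(\Gamma,M[1])$ as a triangulated category, and by Proposition \ref{App_1} it is identified with the orbit category $\mathsf{D}^{\mathrm{b}}(\mod\Gamma)/F$. The commutative diagram furnished by Theorem \ref{univ_orbit} shows that $\widetilde{G}|_{\mathscr{X}}$ factors as the equivalence $\mathsf{D}^{\mathrm{b}}(\mod\Gamma)/F \xrightarrow{\sim} (\underline{\mod}^{\mathbb{Z}}A)/(\ell)$ induced by $G$ (using Theorem \ref{equ2} (2) together with Proposition \ref{req_comm}) followed by the fully faithful canonical functor $(\underline{\mod}^{\mathbb{Z}}A)/(\ell) \hookrightarrow \underline{\mod}^{\mathbb{Z}/\ell\mathbb{Z}}A$ arising from Proposition \ref{Z_to_Z/aZ}. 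Hence $\widetilde{G}|_{\mathscr{X}}$ is fully faithful, and Lemma \ref{uses_five} (1) lifts this to full faithfulness of $\widetilde{G}$. For density, the image of $\widetilde{G}$ contains $\mathrm{Im}(\underline{F_\ell})$ by \eqref{commu2} together with essential surjectivity of $G$, and this image generates $\underline{\mod}^{\mathbb{Z}/\ell\mathbb{Z}}A$ by Proposition \ref{sma_tri}; Lemma \ref{uses_five} (2) then gives density.

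The main technical obstacle is the bimodule identification $M\Lten_\Gamma\underline{T}\cong M$ in $\mathsf{D}^{\mathrm{b}}(\mod^{\mathbb{Z}}(\Gamma^{\op}\otimes_KA))$: although $\underline{T}\cong\Gamma$ as left $\Gamma$-modules, the right $A$-structures on these two objects are different, and one must verify that the $A$-action transported onto $M\otimes_\Gamma\underline{T}$ through this isomorphism agrees with the intrinsic $A$-action on $M$ as a graded $\Gamma^{\op}\otimes_KA$-module. A subsidiary technical point is the Krull-Schmidt property of $\mathsf{D}(\Gamma,M[1])$ needed to invoke Lemma \ref{uses_five} (2); this should follow from its construction as a triangulated subcategory of a derived category of DG modules.
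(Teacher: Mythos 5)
Your proposal matches the paper's proof essentially step for step: the paper's Lemma \ref{reqtri_lem} is exactly your truncation triangle with $P=\bigoplus_{i=\ell}^{2\ell-1}A(i)$, the identification $M\Lten_\Gamma\underline T\cong M$ you flag is Lemma \ref{ten_lem} (proved via the matrix-multiplication map $m\otimes x\mapsto mx$), the induced orbit-category equivalence is Proposition \ref{der-orb_KS}(1), and the final step uses Lemma \ref{uses_five} just as you describe. The Krull--Schmidt point you raise is handled by Proposition \ref{der-orb_KS}(2), which combines idempotent-splitting of $\mathsf{D}(\mathscr{A}/F)$ with Hom-finiteness deduced from Lemma \ref{uses_five_Hom}.
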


We prove these results in the next subsection.

\subsection{Proof of Theorem \ref{hull_vs_graded}}
We keep the notations in the previous subsection.
In this subsection, we prove Proposition \ref{req_comm} and Theorem \ref{hull_vs_graded}. 
Our strategy is to construct a triangle \eqref{univ_tri} and apply Theorem \ref{univ_orbit}. 

We start with the following observation.

\begin{lem}\label{ten_lem}
There exists an isomorphism $M \stackrel{\mathbb{L}}{\otimes}_{\Gamma} \underline{T} \simeq M$ in $\mathsf{D}^{\mathrm{b}}(\mod^{\mathbb{Z}}(\Gamma^{\op} \otimes_K A))$.
\end{lem}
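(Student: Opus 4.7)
The plan is to reduce the derived tensor to an ordinary tensor and then exhibit an explicit isomorphism via matrix multiplication.

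First I would observe that $\underline{T}$ is flat (in fact, projective) as a left $\Gamma$-module. By the matrix identifications recorded just before Proposition \ref{req_comm}, $\underline{T}$ and $\Gamma$ share the same underlying upper-triangular matrix form, and the left $\Gamma$-action on $\underline{T}$ is matrix multiplication; consequently $\underline{T} \cong \Gamma$ as (ungraded) left $\Gamma$-modules, so $\underline{T}$ is flat over $\Gamma$, and $M \Lten_{\Gamma} \underline{T} \simeq M \otimes_{\Gamma} \underline{T}$ in the graded derived category.

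Next I would define $\mu : M \otimes_{\Gamma} \underline{T} \to M$ by $m \otimes t \mapsto m \cdot t$, where the product is matrix multiplication inside the ambient algebra $M_{\ell}(A)$, into which $\Gamma$, $M$, and $\underline{T}$ all naturally embed. Well-definedness on the tensor product and $\Gamma$-bilinearity from the left are immediate from associativity $(m\gamma)t = m(\gamma t)$. A direct computation shows $(mt)_{rs} = \sum_{k} m_{rk} t_{ks}$ lies in $A_{\ell - r + s}$, which matches the entry of $M$ at position $(r,s)$; for $s > r$ this exponent exceeds $\ell$ and the product vanishes because the Gorenstein parameter assumption forces $A_i = 0$ for $i > \ell$, matching the zero entries of $M$. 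Right $A$-linearity follows from the same associativity computation. Bijectivity is then immediate from the isomorphism $\underline{T} \cong \Gamma$ of left $\Gamma$-modules, under which $M \otimes_{\Gamma} \underline{T} \cong M \otimes_{\Gamma} \Gamma \cong M$ canonically and $\mu$ realizes this composite; surjectivity alone is visible from $\mu(m \otimes I) = m$, where $I$ is the identity matrix of $M_{\ell}(A)$, which lies in $\underline{T}$ because its diagonal is $A_0 \ni 1$.

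The main technical obstacle will be verifying grade-preservation. The naive formula $\deg(m \otimes t) = \deg(m) + \deg(t)$ does not descend to the tensor product: the relations arising from off-diagonal elements of $\Gamma$ identify tensors whose total component grades differ, because the right $\Gamma$-action on $M$ shifts grade by the matrix offset, whereas the left $\Gamma$-action on $\underline{T}$ is grade-preserving (since $\Gamma = \underline{\End}_{A}(T)_0$ consists of degree-zero morphisms). I would resolve this by equipping $M \otimes_{\Gamma} \underline{T}$ with the grading inherited from $\underline{T}$ alone, under which the tensor is well-graded, and then check position-by-position that $\mu$ carries the degree-$j$ piece into $M_{j}$ by tracking where matrix entries land under the product.
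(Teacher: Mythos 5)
Your proof is correct and takes essentially the same route as the paper's: reduce $\stackrel{\mathbb{L}}{\otimes}$ to $\otimes$ via the left-$\Gamma$-module isomorphism $\underline{T}\cong\Gamma$, exhibit matrix multiplication $m\otimes t\mapsto mt$ as the isomorphism, and verify gradings. The paper simply records the grading check as the inclusion $f(M\otimes_{\Gamma}\underline{T}_i)\subset M_i$, which is precisely the statement that the grading on the tensor product is the one inherited from $\underline{T}$ (legitimate since $\Gamma=\underline{\End}_A(T)_0$ acts on $\underline{T}$ in degree $0$); your more explicit discussion of why only this grading is the correct one is sound but not additional mathematical content.
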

\begin{proof}
First since $\underline{T}$ is isomorphic to $\Gamma$  as a $\Gamma^{\op}$-module, we have $M \stackrel{\mathbb{L}}{\otimes}_{\Gamma} \underline{T} = M \otimes_{\Gamma} \underline{T}$.
Next the matrix multiplication gives an isomorphism 
\[
f:M \otimes_{\Gamma} \underline{T} \ni m \otimes x \longmapsto mx \in M
\]
of $(\Gamma^{\op}\otimes_KA)$-modules.
Finally since $f(M \otimes_{\Gamma}(\underline{T}_i)) \subset M_i$ holds for any $i \in \mathbb{Z}$, we have that $f$ is an isomorphism in $\mod^{\mathbb{Z}}(\Gamma^{\op} \otimes_K A)$.
\end{proof}

\begin{lem}\label{reqtri_lem}
The following assertions hold.
\begin{enumerate}
\def\labelenumi{(\theenumi)} 
\item There exists an exact sequence
\[
0 \longrightarrow M \longrightarrow \bigoplus_{i=\ell}^{2\ell-1}A(i) \longrightarrow \underline{T}(\ell) \rightarrow 0
\]
in $\mod^{\mathbb{Z}} (\Gamma^{\op} \otimes A)$.
\item There exists a triangle
\begin{eqnarray}
\bigoplus_{i=\ell}^{2\ell-1}A(i) \longrightarrow \underline{T}(\ell) \xrightarrow{\ \phi \ } M[1]  \stackrel{\mathbb{L}}{\otimes}_{\Gamma}\underline{T} \longrightarrow \bigoplus_{i=\ell}^{2\ell-1}A(i)[1]  \label{reqtri}
\end{eqnarray}
in $\mathsf{D}^{\mathrm{b}}(\mod^{\mathbb{Z}}(\Gamma^{\op} \otimes_K A))$.
\end{enumerate}
\end{lem}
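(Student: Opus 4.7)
The plan is to prove (1) by a direct comparison of the three modules as graded $A$-modules, and then to obtain (2) by rotating the associated distinguished triangle and applying Lemma~\ref{ten_lem}.

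For (1), the key observation is that the Gorenstein parameter hypothesis $\Soc A \subset A_{\ell}$ forces $A_j = 0$ for $j > \ell$, so for every $k \geq \ell$ the shifted module $A(k)$ is already supported in degrees $\leq 0$. Using this, I would rewrite
\[
\underline{T}(\ell) \;=\; \bigoplus_{i=0}^{\ell-1} (A(i)_{\leq 0})(\ell) \;=\; \bigoplus_{k=\ell}^{2\ell-1} A(k)_{\leq -\ell},
\]
which---combined with the defining expression $M = \bigoplus_{k=\ell}^{2\ell-1} A(k)_{\geq 1-\ell}$---makes the claimed sequence visibly the direct sum over $k$ of the standard degree-wise truncation short exact sequences
\[
0 \longrightarrow A(k)_{\geq 1-\ell} \longrightarrow A(k) \longrightarrow A(k)_{\leq -\ell} \longrightarrow 0
\]
in $\mod^{\mathbb{Z}}A$. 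To promote this to $\mod^{\mathbb{Z}}(\Gamma^{\op}\otimes_K A)$, I would invoke the matrix descriptions \eqref{df_Beilinson} and \eqref{df_M}: all entries of $\Gamma$ lie in non-negative $A$-degrees, so the action of $\Gamma$ by left matrix multiplication preserves the truncation $(-)_{\geq 1-\ell}$ inside each $A(k)$. Hence $M$ is a $\Gamma$-submodule and the quotient map is $\Gamma$-linear and homogeneous, giving the exact sequence in $\mod^{\mathbb{Z}}(\Gamma^{\op}\otimes_K A)$.

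For (2), the short exact sequence of (1) yields a distinguished triangle
\[
M \longrightarrow \bigoplus_{i=\ell}^{2\ell-1} A(i) \longrightarrow \underline{T}(\ell) \longrightarrow M[1]
\]
in $\mathsf{D}^{\mathrm{b}}(\mod^{\mathbb{Z}}(\Gamma^{\op} \otimes_K A))$, which I rotate once. Lemma~\ref{ten_lem} then supplies an isomorphism $M \simeq M \Lten_{\Gamma} \underline{T}$; shifting by $[1]$ replaces $M[1]$ with $M[1] \Lten_{\Gamma} \underline{T}$ and produces the desired triangle \eqref{reqtri}.

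I do not expect any genuine obstacle here, since the argument is essentially a combination of a degree-wise truncation, the identification of $\underline{T}(\ell)$ as a graded $\Gamma^{\op}\otimes_K A$-module, and Lemma~\ref{ten_lem}. The most delicate bookkeeping step is the reindexing $k = i + \ell$ used in rewriting $\underline{T}(\ell)$, which depends crucially on $A$ having Gorenstein parameter exactly $\ell$; this is the one place in the proof where that hypothesis is actually used, and it is precisely what makes $M$ and $\underline{T}(\ell)$ assemble into the complement of $\bigoplus_{i=\ell}^{2\ell-1} A(i)$ in the two obvious degree ranges.
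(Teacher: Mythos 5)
Your argument is correct and takes essentially the same route as the paper: you realize the sequence of (1) summand-by-summand as the truncation short exact sequences $0 \to A(k)_{\geq 1-\ell} \to A(k) \to A(k)_{\leq -\ell} \to 0$ for $\ell \le k \le 2\ell-1$, which is exactly what the paper's matrix figure encodes, check that the left $\Gamma$-action is by degree-$0$ endomorphisms and hence respects the truncation, and obtain (2) by rotating and invoking Lemma~\ref{ten_lem}. One small misattribution in your closing remark: the identity $(A(i)_{\leq 0})(\ell) = A(i+\ell)_{\leq -\ell}$ is a formal compatibility of grading shift with truncation valid for any graded module, so the Gorenstein parameter hypothesis is not actually invoked at the reindexing step --- it enters only through the definitions of $\underline{T}$, $\Gamma$, and $M$, which were fixed earlier in the setup.
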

\begin{proof}
(1) The following figure shows that the $\mathbb{Z}$-graded $A$-module $M$ is the first syzygy of the $\mathbb{Z}$-graded $A$-module $\underline{T}(\ell)$.
\[
\left( \begin{array}{c|c} \underline{T}(\ell) & M \end{array} \right)=
\left(\begin{array}{ccccc|cccccc}
A_0 & A_1 & \cdots & A_{\ell-2} &  A_{\ell-1} & A_{\ell} & & &  & 0 \\
 & A_0 & \cdots & A_{\ell-3} & A_{\ell-2}  & A_{\ell-1} & A_{\ell} &  & &   \\
 &  & \ddots & \vdots & \vdots & \vdots & \vdots & \ddots &  &  \\
 &  &  & A_0 & A_1  & A_2 & A_3 & \cdots & A_{\ell} &  \\
0 &  & & &  A_0 & A_1 & A_2  & \cdots & A_{\ell-1} &  A_{\ell} 
\end{array} \right)
\]
Thus there is an exact sequence
\[
0 \longrightarrow M \longrightarrow \bigoplus_{i=\ell}^{2\ell-1}A(i) \longrightarrow \underline{T}(\ell) \rightarrow 0
\]
in $\mod^{\mathbb{Z}}A$.
We can show $\End_A(\bigoplus_{i=\ell}^{2\ell-1}A(i))_0 \simeq \Gamma$ by the similar way as in Lemma \ref{nonstab_end}. 
Thus $\bigoplus_{i=\ell}^{2\ell-1}A(i)$ is a $\mathbb{Z}$-graded $\Gamma^{\op} \otimes_KA$-module. 
It is easy to check that the above exact sequence is in $\mod^{\mathbb{Z}}(\Gamma^{\op}\otimes_KA)$.
Thus we have the assertion.

(2) Since each short exact sequence in $\mod^{\mathbb{Z}}(\Gamma^{\op} \otimes A)$ gives a triangle in $\mathsf{D}(\mod^{\mathbb{Z}}(\Gamma^{\op} \otimes_K A))$, the assertion follows from (1) and Lemma \ref{ten_lem}.
\end{proof}

Now we are ready to prove Proposition \ref{req_comm}  and Theorem \ref{hull_vs_graded}.

\begin{proof}[Proof of Proposition \ref{req_comm}]
We use the map $\phi$ given in \eqref{reqtri}. Let $P=\bigoplus_{i=\ell}^{2\ell-1}A(i)$.
For $X \in \mathsf{D}^{\mathrm{b}}(\mod\Gamma)$, we have a triangle 
\[
H(X \Lten_{\Gamma} P) \longrightarrow H(X \Lten_{\Gamma}\underline{T}(\ell)) \xrightarrow{H(X \Lten_{\Gamma} \phi) } H(X \Lten_{\Gamma}M[1]  \stackrel{\mathbb{L}}{\otimes}_{\Gamma}\underline{T}) \longrightarrow H(X \Lten_{\Gamma}P)[1]
\]
in $\underline{\mod}^{\mathbb{Z}}A$ by applying $H(X \Lten_{\Gamma}-)$ to the triangle \eqref{reqtri}.  
Here we have $H(X \Lten_{\Gamma}\underline{T}(\ell))=G(X)(\ell)$ and $H(X \Lten_{\Gamma}M[1]  \stackrel{\mathbb{L}}{\otimes}_{\Gamma}\underline{T})=GF(X)$.
Since $X \Lten_{\Gamma} P$ belongs to $\mathsf{K}^{\mathrm{b}}(\proj^{\mathbb{Z}}A)$ as an object in $\mathsf{D}^{\mathrm{b}}(\mod^{\mathbb{Z}}A)$, we have $H(X \Lten_{\Gamma} P)=0$.
Thus we have an isomorphism $H(X \Lten_{\Gamma} \phi):G(X)(\ell) \rightarrow GF(X)$ in $\underline{\mod}^{\mathbb{Z}}A$.
Consequently we have an isomorphism $H(- \Lten_{\Gamma} \phi) : (\ell) \circ G \longrightarrow G \circ F$ of functors.
\end{proof}

Thanks to Proposition \ref{req_comm}, we can show the following property of the derived-orbit category $\mathsf{D}(\Gamma,M[1])$.

\begin{prop}\label{der-orb_KS}
The following assertions hold.
\begin{enumerate}
\def\labelenumi{(\theenumi)} 
\item We have an equivalence $\mathsf{D}^{\mathrm{b}}(\mod\Gamma)/F \simeq (\underline{\mod}^{\mathbb{Z}}A)/(\ell)$.
\item The derived-orbit category $\mathsf{D}(\Gamma,M[1])$ is a Hom-finite Krull-Schmidt category.
\end{enumerate}
\end{prop}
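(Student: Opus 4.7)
The plan is to derive (1) directly from the commutative diagram of Proposition \ref{req_comm}, and then bootstrap (2) out of (1).

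For (1), the key input is that the triangle-equivalence $G$ intertwines the autoequivalences $F$ and $(\ell)$ via the natural isomorphism $(\ell)\circ G \simeq G\circ F$. From this I would run a formal argument: any equivalence of triangulated categories that commutes with an autoequivalence up to natural isomorphism induces compatible bijections on the Hom spaces $\bigoplus_k \Hom(X,\Phi^kY)$ used to build the orbit category, and on the colimits used in the DG version, so mutually quasi-inverse functors between $\mathsf{D}^{\mathrm{b}}(\mod\Gamma)$ and $\underline{\mod}^{\mathbb{Z}}A$ descend to mutually quasi-inverse functors between $\mathsf{D}^{\mathrm{b}}(\mod\Gamma)/F$ and $(\underline{\mod}^{\mathbb{Z}}A)/(\ell)$.

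For the Hom-finiteness statement in (2), I would exploit the fully faithful embedding $(\underline{\mod}^{\mathbb{Z}}A)/(\ell)\hookrightarrow \underline{\mod}^{\mathbb{Z}/\ell\mathbb{Z}}A$ recorded in Proposition \ref{Z_to_Z/aZ}, together with Hom-finiteness of $\underline{\mod}^{\mathbb{Z}/\ell\mathbb{Z}}A$ from Proposition \ref{graded_KS}. Combined with (1), this shows that $\mathsf{D}^{\mathrm{b}}(\mod\Gamma)/F$ is Hom-finite. Since by Proposition \ref{gen_orbit} this subcategory is closed under $[\pm 1]$ and generates $\mathsf{D}(\Gamma,M[1])$ as a triangulated category, Lemma \ref{uses_five_Hom} propagates Hom-finiteness to the whole derived-orbit category.

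For the Krull-Schmidt conclusion, $\mathsf{D}(\Gamma,M[1])$ is by construction a thick subcategory of $\mathsf{D}(\mathscr{A}/F)$ (Definition \ref{df_tri_hull}), hence closed under direct summands, i.e.\ idempotents split there. A Hom-finite $K$-linear additive category in which idempotents split is automatically Krull-Schmidt, which finishes the proof.

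The main obstacle I anticipate is making the orbit-category step of (1) entirely rigorous — one must verify that the natural isomorphism $(\ell)\circ G \simeq G\circ F$ is coherent with all iterates and with the colimit defining the DG orbit, so as to produce a genuine equivalence (not merely an object-level bijection). This is ultimately formal but requires care; everything after it reduces to applying results already collected in the paper.
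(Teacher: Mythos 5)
Your argument for (1) and for Hom-finiteness in (2) matches the paper's route: Proposition \ref{req_comm} gives the intertwining $(\ell)\circ G\simeq G\circ F$, which descends to an equivalence of orbit categories; and Hom-finiteness of $(\underline{\mod}^{\mathbb{Z}}A)/(\ell)$ (via the embedding into $\underline{\mod}^{\mathbb{Z}/\ell\mathbb{Z}}A$) plus Lemma \ref{uses_five_Hom} propagates Hom-finiteness to the whole derived-orbit category, exactly as the paper argues.

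However, the Krull--Schmidt step contains a genuine logical gap. You write that since $\mathsf{D}(\Gamma,M[1])$ is a thick subcategory of $\mathsf{D}(\mathscr{A}/F)$ it is ``closed under direct summands, i.e.\ idempotents split there.'' That ``i.e.''\ is not valid: being closed under direct summands in the ambient category is only as strong as the ambient category's supply of direct summands. If an idempotent endomorphism in $\mathsf{D}(\Gamma,M[1])$ does not split already in $\mathsf{D}(\mathscr{A}/F)$, closure under summands does nothing for you. What is needed, and what the paper supplies, is the prior fact that $\mathsf{D}(\mathscr{A}/F)$ itself has split idempotents; the paper cites Neeman's result (every triangulated category with countable coproducts is idempotent-complete, applied to the derived category of the DG orbit category) for this. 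Only then does thickness give idempotent-completeness of the subcategory, and then Hom-finiteness plus idempotent-splitting yields Krull--Schmidt. Adding the citation to that result (or its statement) closes the gap; otherwise the argument as written does not establish that idempotents split in $\mathsf{D}(\Gamma,M[1])$.
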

\begin{proof}
(1) By Proposition \ref{req_comm}, $G$ induces an equivalence $\mathsf{D}^{\mathrm{b}}(\mod \Gamma)/F \simeq (\underline{\mod}^{\mathbb{Z}}A)/(\ell)$.

(2) By (1), $\mathsf{D}^{\mathrm{b}}(\mod \Gamma)/F$ is Hom-finite.
By this fact and since $\mathsf{D}^{\mathrm{b}}(\mod \Gamma)/F$ is closed under shifts $[\pm 1]$ and generates $\mathsf{D}(\Gamma,M[1])$ as a triangulated category, $\mathsf{D}(\Gamma,M[1])$ is Hom-finite by Lemma \ref{uses_five_Hom}.

To show that $\mathsf{D}(\Gamma,M[1])$ is Krull-Schmidt, we only have to show that it is idempotent split. 
The category $\mathsf{D}(\Gamma,M[1])$ is defined as a thick subcategory of the derived category of some DG orbit category (see Definition \ref{df_orbit}). 
By \cite[Proposition 1.6.8]{Ne}, the derived categories of DG categories are idempotent split.  
Thus $\mathsf{D}(\Gamma,M[1])$ is idempotent split.
\end{proof}

\begin{proof}[Proof of Theorem \ref{hull_vs_graded}]
Applying Theorem \ref{univ_orbit} to the triangle \eqref{reqtri} given in Lemma \ref{reqtri_lem} (2), there exists a triangle-functor $\widetilde{G}:\mathsf{D}(\Gamma,M[1]) \rightarrow \underline{\mod}^{\mathbb{Z}/\ell\mathbb{Z}}A$ which makes the diagram \eqref{commu2} commutative.

We show that $\widetilde{G}$ is an equivalence. 
By Theorem \ref{univ_orbit} and Proposition \ref{der-orb_KS} (1), we have a commutative diagram 
\[
\xymatrix{
\mathsf{D}^{\mathrm{b}}(\mod \Gamma)/F  \ar[rr]  \ar[d] & &  (\underline{\mod}^{\mathbb{Z}}A)/(\ell) \ar[d] \\
\mathsf{D}(\Gamma,M[1])  \ar[rr]_{\widetilde{G}} & & \underline{\mod}^{\mathbb{Z}/\ell\mathbb{Z}}A
}
\]
where the upper functor is an equivalence. 
Applying Lemma \ref{uses_five}, we have that $\widetilde{G}$ is an equivalence.
\end{proof}

\subsection{Examples}
In this subsection, we give examples of Theorem \ref{hull_vs_graded}.

First we consider positively graded symmetric algbras.

\begin{ex}\label{sym_hull_vs_graded}
Let $A$ be a positively graded symmetric algebra of Gorenstein parameter $\ell$, $\Gamma$ the algebra given in \eqref{df_Beilinson}, and $M$ the $\Gamma^{\op} \otimes_K \Gamma$-module given in \eqref{df_M}.

Now we show that there exists an isomorphism $M \simeq D\Gamma$ of $\Gamma ^{\op}\otimes_K \Gamma$-modules.
Indeed since $A$ is symmetric and has Gorenstein parameter $\ell$, there is an isomorphism
\[
\varphi: A \longrightarrow D(A(\ell)) 
\]
of graded $A^{\op}\otimes_KA$-modules.
Then the restriction of $\varphi$ gives an isomorphism 
\[
\varphi |_{A_i}: A_i \longrightarrow D(A_{\ell-i})
\]
of $K$-vector spaces.

Now we define an isomorphism 
\[
\widetilde{\varphi}: M=\left( \begin{array}{ccccc} 
A_{\ell} & & &  & 0 \\
A_{\ell-1} & A_{\ell} &  & &   \\
\vdots & \vdots & \ddots &  &  \\
A_2 & A_3 & \cdots & A_{\ell} &  \\
A_1 & A_2  & \cdots & A_{\ell-1} &  A_{\ell} 
\end{array} \right) \longrightarrow \left( \begin{array}{ccccc} 
D(A_{0}) & & &  & 0 \\
D(A_{1}) & D(A_{0}) &  & &   \\
\vdots & \vdots & \ddots &  &  \\
D(A_{\ell-2}) & D(A_{\ell-3}) & \cdots & D(A_{0}) &  \\
D(A_{\ell-1}) & D(A_{\ell-2})  & \cdots & D(A_{1}) & D(A_{0}) 
\end{array} \right)=D\Gamma
\]
of $K$-vector spaces by
\[
\widetilde{\varphi}((x_{ij})) := (\varphi(x_{ij}))
\]
for any $(x_{ij}) \in M$. 
One can check that $\widetilde{\varphi}$ is an isomorphism of $\Gamma^{\op} \otimes_K \Gamma$-modules.

By the above description of $M$, if $A_0$ has finite global dimension, then we have a triangle-equivalence 
\[
\mathsf{D}(\Gamma,D\Gamma[1]) \simeq \underline{\mod}^{\mathbb{Z}/\ell\mathbb{Z}}A.
\]
by applying Theorem \ref{hull_vs_graded}.
\end{ex}

The second example is an application of Example \ref{sym_hull_vs_graded} to trivial extensions (see Example \ref{equ_Happel}).

\begin{ex}
Let $\Lambda$ be an algebra, and $A$ the trivial extension of $\Lambda$.
We consider the positive grading on $A$ given in Example \ref{equ_Happel}.
Then $A$ is a positively graded symmetric algebra of  Gorenstein parameter $1$ (Remark \ref{Happel<Chen}). 
Thus if $\Lambda$ has finite global dimension, then there exists a triangle-equivalence
\[
\mathsf{D}(\Lambda,D\Lambda[1]) \simeq \underline{\mathrm{mod}}A.
\]
by Example \ref{sym_hull_vs_graded}. 
Thus the stable categories of trivial extensions are always triangle-equivalent to ``$(-1)$-cluster categories''.
\end{ex}

\bigskip

Finally we continue to discuss Example \ref{ex_concrete1}.

\begin{ex}
We keep the notations in Example \ref{ex_concrete1}. 
So $A=K[x]/(x^{n+1})$ is a positively graded symmetric algebra with $\deg x=1$.
Then $A$ has Gorenstein parameter $n$. 
By Corollary \ref{sym_hull_vs_graded}, there exists a triangle-equivalence
\begin{eqnarray}
\underline{\mod}^{\mathbb{Z}/n\mathbb{Z}}A \simeq \mathsf{D}(\Gamma,D\Gamma[1]). \label{ex_hull_equ}
\end{eqnarray}
where $\Gamma$ is an $n \times n$ upper triangular matrix algebra over $K$.

Now we consider the special case $n=2$ as in Example \ref{ex_concrete1}.
Since a complete set of indecomposable $\mathbb{Z}/2\mathbb{Z}$-graded $A$-modules is given by $\{ F_2(X^i(j)) \ | \ i=1,2, \ j=0,1\}$, 
we obtain the Auslander-Reiten quiver of $\underline{\mod}^{\mathbb{Z}/2\mathbb{Z}}A$ by identifying each vertex $X$ with $X(2)$ in the Auslander-Reiten quiver \eqref{AR1}.
\[
\begin{xy}
(12,12) *{X^1}="F" ,
(0,0) *{X^2}="E" ,
(36,12) *{X^1(1)}="H" ,
(24,0) *{X^2(1)}="G" ,
(48,0) *{X^2}="I" ,

\ar "E" ; "F"
\ar "F" ; "G"
\ar "G" ; "H"
\ar "H" ; "I"
\ar@{.>} "G" ; "E"
\ar@{.>} "H" ; "F"
\ar@{.>} "I" ; "G"
\end{xy}
\]

On the other hand, let
\[
F:=- \Lten_{\Gamma}D\Gamma[1] :\mathsf{D}^{\mathrm{b}}(\mod \Gamma) \longrightarrow \mathsf{D}^{\mathrm{b}}(\mod \Gamma).
\]
Then we have $F \simeq \tau[2]$ by Theorem \ref{Serre=gldim}.
By this and since $\Gamma$ is hereditary, the orbit category $\mathsf{D}^{\mathrm{b}}(\mod\Gamma)/F$ has the natural structure of triangulated category by \cite[Section 4, Theorem]{Ke2}.
Thus we have $\mathsf{D}(\Gamma,D\Gamma[1]) = \mathsf{D}^{\mathrm{b}}(\mod\Gamma)/F$. 
We obtain the Auslander-Reiten quiver of $\mathsf{D}(\Gamma,D\Gamma[1])$ by identifying each vertex $X$ with $FX$ in the Auslander-Reiten quiver \eqref{AR2}.
\[
\begin{xy}
(12,12) *{P^1}="F" ,
(0,0) *{P^2}="E" ,
(36,12) *{P^1[1]}="H" ,
(24,0) *{I^1}="G" ,
(48,0) *{P^2}="I" ,

\ar "E" ; "F"
\ar "F" ; "G"
\ar "G" ; "H"
\ar "H" ; "I"
\ar@{.>} "G" ; "E"
\ar@{.>} "H" ; "F"
\ar@{.>} "I" ; "G"
\end{xy}
\]

The above Auslander-Reiten quivers of $\underline{\mod}^{\mathbb{Z}/2\mathbb{Z}}A$ and $\mathsf{D}(\Gamma,D\Gamma[1])$ have the same shape, and this is a consequence of our triangle-equivalence Theorem \ref{hull_vs_graded}.

\end{ex}

\bigskip

\noindent \textit{Acknowledgement} 
This article is my Doctor thesis in Nagoya University.
I shall be deeply grateful to my supervisor O. Iyama for his contributions. 
He gave me so many helpful suggestions and discussions, and answered my questions patiently till I understood. 
I am also grateful to all staff in Nagoya University for taking care of me.
I would like to express his gratitude to B. Keller for answering to my questions about his paper \cite{Ke2} kindly.
I thank H. Asashiba, S. Oppermann and M. Herschend for their helpful comments.

\bigskip


\end{document}